\theoremstyle{plain}
\newtheorem{theorem}{Theorem}
\newtheorem{lemma}{Lemma}
\newtheorem*{lemma*}{Lemma}
\newtheorem{corollary}{Corollary}
\newtheorem{maintheorem}{Theorem}
\newtheorem*{theorem*}{Theorem}
\newtheorem{proposition}{Proposition}
\theoremstyle{definition}
\newtheorem{definition}{Definition}
\newtheorem{example}{Example}
\newtheorem{remark}{Remark}
\newtheorem*{remark*}{Remark}
\newtheorem*{notation*}{Notation}
\newcommand{\cB}{\mathscr B}
\newcommand{\cD}{\mathscr D}
\newcommand{\cE}{\mathscr E}
\newcommand{\cF}{\mathscr F}
\newcommand{\cH}{\mathscr H}
\newcommand{\cS}{\mathscr S}
\newcommand{\proj}{p}
\newcommand{\unpp}{\operatorname{unpp}}
\newcommand{\id}{\operatorname{id}}
\newcommand{\conv}{\operatorname{conv}}
\newcommand{\reach}{\operatorname{reach}}
\newcommand{\lip}{C^{1,1}}
\newcommand{\N}{{\mathbb N}}
\newcommand{\R}{{\mathbb R}}
\newcommand{\ang}{{\sphericalangle}}
\newcommand{\exclude}[1]{}
\definecolor{grey}{rgb}{0.7, 0.7, 0.7}
\definecolor{darkgreen}{rgb}{0., 0.6, 0.}
\def\blfootnote{\xdef\@thefnmark{}\@footnotetext}
\newcommand*\samethanks[1][\value{footnote}]{\footnotemark[#1]}
\newcommand{\pdifferentiable}{%
Let 
$M$ be a $C^k$-submanifold of $\R^d$ with $k\ge 1$.
Then $p$ is $C^{k-1}$ on  $\cE(M)$ and $\delta_M$ is $C^k$ on 
$\cE(M)\setminus M$.  
}
\newcommand{\varthetacontinuous}{%
Let $M\subseteq \R^d$ be a $\lip$ submanifold.
Then 
\begin{enumerate}
\item the frontier function $\vartheta$
is lower semi-continuous;
\item if $M$ is $C^2$, then $\vartheta$
is continuous.
\end{enumerate}
}
\newcommand{\theoremDpformula}{%
Let $M$ be a $C^2$ submanifold.
For every $x\in \cE(M)\setminus M$  
the derivative of $p$ in $x$ is given by
\begin{align*}
Dp(x)
&=\Big(\id_{T_{p(x)}(M)}-\|x-p(x)\|L_{p(x),v}\Big)^{-1}P_{T_{p(x)}(M)}\,,
\end{align*}
where 
\( v=\|x-p(x)\|^{-1}(x-p(x))\) and $L_{p(x),v}$ 
is the shape operator in direction $v$ at $p(x)$.
For every $x\in M$  
the derivative of $p$ in $x$ is 
$Dp(x)
=P_{T_{p(x)}(M)}\,.$
}
\newcommand{\theoremprlip}{%
Let $M$ be an $m$-dimensional topological submanifold of $\R^d$ with
$M\subseteq \cE(M)$.
Then $M$ is $\lip$.
}
\newcommand{\theoremmedialaxis}{%
Let $M\subseteq \R^d$ be a closed subset. 
Let $\cH(M)$ be the set of all closed half-spaces containing $M$.
Then 
\[
M^c=\Bigg(\bigcup_{H\in  \cH(M)}H^c\Bigg) \cup \bigcup \big\{B_r(x)\colon x\in \cS(M^c)\text{ and } r=d(x,M)\big\}\,.
\] 
}
\newcommand{\varthetacontinuoustwo}{%
If $M$ is a $\lip$ submanifold and $\cS(M^c)$ is closed, then $\vartheta$ is continuous.
}
\newcommand{\repeatthm}[2]{
\begingroup
\def\thetheorem{\ref{#2}}
\begin{theorem}
#1
\end{theorem}
\addtocounter{theorem}{-1}
\endgroup
}
\title{Existence, Uniqueness and Regularity of the Projection onto Differentiable Manifolds}
\author{Gunther Leobacher\thanks{Both authors are supported by the Austrian
Science Fund (FWF): Project F5508-N26, which is part of the Special Research
Program `Quasi-Monte Carlo Methods: Theory and Applications'.}\; and Alexander Steinicke\samethanks[1]}
\date{February 2020}
\begin{document}

\maketitle

\begin{abstract}
We  investigate the maximal open domain $\cE(M)$ on which the orthogonal 
projection map $p$ onto a 
subset  $M\subseteq \R^d$
can be defined and study essential properties of $p$.  
We prove that if $M$ is a $C^1$ submanifold of $\R^d$ satisfying a 
Lipschitz condition on the tangent spaces, then 
$\cE(M)$ can be described by a lower semi-continuous frontier function.  We
show that this frontier function is continuous if $M$ is $C^2$ or if the
topological skeleton of $M^c$ is closed and we provide an example showing
that the frontier function need not be continuous in general.

We demonstrate that, for a $C^k$-submanifold $M$ with $k\ge 2$,
the projection map is $C^{k-1}$ on $\cE(M)$, and  we obtain  a differentiation formula for the projection map which is used to  discuss boundedness of its higher order  derivatives on tubular neighborhoods.

 A sufficient condition for the inclusion $M\subseteq\cE(M)$ is that $M$ is a $C^1$ submanifold whose tangent spaces satisfy a local Lipschitz condition. We prove in a new way that this condition is also necessary. More precisely, if $M$ is a topological submanifold with $M\subseteq\cE(M)$, then $M$ must be $C^1$ and its tangent spaces satisfy the same local Lipschitz condition.

A final section is devoted to highlighting some  relations between $\cE(M)$ and the topological skeleton of $M^c$. 
\end{abstract}

\bigskip
\noindent Keywords: 
Nonlinear orthogonal projection,
medial axis,
sets of positive reach,
tubular neighborhood

\noindent MSC 2010: 
53A07, 57N40

\section{Introduction}

In many problems from analysis and numerical analysis it is important to know
the regularity of the orthogonal projection $p$  onto a sufficiently regular
submanifold $M$ of $\R^d$ as well as the regularity of the respective distance function.
A classical
example is the Dirichlet problem for quasilinear partial
differential equations, where the manifold of interest is the
boundary of the underlying domain \cite{serrin,Gilbarg}. 
The regularity properties of the distance function are crucial in the Lemma of Hopf, see \citet{lions} and also \citet{hopf}.
Another more recent instance is the study
of stochastic differential equations with discontinuous drift and their
numerical solvability in \citet{sz2016b, sz2017c} or \citet{sz2019}, where the manifold of interest
is the set of discontinuities of the drift coefficient.

Early results are limited to the restriction of $p$ to a small 
neighborhood of $M$, such as the tubular neighborhood theorems in \citet{hirsch}, \citet{federer1957,Foote1984}, or \citet{krantz1981}.
The first non-local result we could find is contained in 
 the article of \citet{dudekholly}, where the regularity 
of the distance function and the non-linear orthogonal projection is
studied on the maximal open domain of definition $\cE(M)$ of the projection
function.
The manifolds considered by Dudek and Holly are required to be $C^1$
with a local Lipschitz condition on the tangent bundle, referred to as 
$\lip$
(see \autoref{def:lip}). Under these 
assumptions they showed that $M\subseteq \cE(M)$ (we restate their result in
\autoref{theorem:dudek-holly}).

The four major results of our article  contribute to an almost complete understanding 
of the conditions on the submanifold for existence and regularity of the 
projection map and the shape of its domain, and will be presented subsequently.
The first of these results concerns the shape of $\cE(M)$ for a given
submanifold, which can be described by the frontier function $\vartheta$ for $M$. The latter  measures  locally how far one can move from a point of $M$ in orthogonal direction without leaving $\cE(M)$ 
(see \autoref{def:nustar}), thus quantifying the relation  $M\subseteq \cE(M)$.
\repeatthm{\varthetacontinuous}{theorem:theta-continuous}
One important consequence of
\autoref{theorem:theta-continuous} is \autoref{lemma:fiberbundle}, which states  that $\cE(M)$ is a fiber bundle if $M$ is
$C^2$.
A counterexample, \autoref{ex:theta-discont}, shows that 
if $M$ is $\lip$ but not $C^2$, then, in general, it only admits a lower semi-continuous frontier function.

The second major result is that the Lipschitz condition 
on the tangent bundle used by \citet{dudekholly} is not only sufficient, but
even necessary. More precisely, we show in \autoref{theorem:pr-lip} that a
topological submanifold 
which satisfies $M\subseteq \cE(M)$ is necessarily 
$\lip$.
\repeatthm{\theoremprlip}{theorem:pr-lip}
\autoref{theorem:pr-lip} has already already been proven in \citet{Lytchak2005} for submanifolds of Riemannian manifolds. The proof uses methods and results 
from $\operatorname{CAT}(k)$ space theory.
Using another method of proof,  \citet{Rataj2017} provide a proof of \autoref{theorem:pr-lip} for submanifolds of the $\R^d$.
We provide a new self-contained proof which is a nice application of the 
Borsuk-Ulam theorem (entering through an elegant proof of \autoref{lemma:wurscht}) and the Brouwer fixed-point theorem (entering through \autoref{lemma:extend-segment}).
We point out in \autoref{remark:blaschke} that \autoref{theorem:pr-lip}
generalizes one 
direction of the celebrated Blaschke's Rolling Theorem.

We turn to our third major result:
\citet{dudekholly}
show that the projection map is $(k-1)$-times differentiable if the 
submanifold is of class $C^k$, $k\ge 2$, thus generalizing the local 
result by \citet{Foote1984}. Their result is restated here in
\autoref{theorem:diffproj}. We give a different proof 
and
derive a differentiation formula, which in similar form 
appeared in the literature, see, e.g., \citet[Section 3]{ambman96}.
For an implicit form see \citet[Theorem 4.23]{Rataj2019}. 
\repeatthm{\theoremDpformula}{theorem:Dp-formula}
From this particular formula for the derivative we obtain
criteria for the boundedness of higher derivatives of the projection,
see \autoref{corollary4},  which weakens the requirements on 
the hypersurfaces appearing in \cite{sz2016b, sz2017c, sz2019}.
The statement of this corollary uses the concept of a subset's reach introduced in \citet{federer1957}.  \autoref{theorem:reach} highlights
connections between the reach of $M$ and its frontier function.

Our final major result is another continuity result for the frontier function of $M$, which depends on the topological skeleton of $M^c$.
\repeatthm{\varthetacontinuoustwo}{theorem:theta-continuous2}

The paper is organized as follows:
In Section \ref{sec:domainE} we recall and prove basic
 properties of the projection $p$ and the set $\cE(M)$. Some of these results
are of independent interest as they also hold for general subsets $M\subseteq\R^d$. 
The  section also contains \autoref{theorem:theta-continuous} and its corollaries.
In Section \ref{sec:regularity} we prove regularity of $p$ and the corresponding distance function
(\autoref{theorem:diffproj}) and give a differentiation formula for $p$ in terms of the 
manifold's shape operator (\autoref{theorem:Dp-formula}). We relate the 
boundedness of the (higher) derivatives of unit normal fields of a 
$C^k$ hypersurface, $k\ge 2$, with positive reach 
to the boundedness of the higher derivatives
of $p$ (\autoref{corollary4}).
Section \ref{section:converse} is dedicated to the proof of 
\autoref{theorem:pr-lip} and
finally, Section \ref{sec:skeleton} highlights the relation between $\cE(M)$ and the medial axis/topological skeleton of $M^c$ in 
\autoref{theorem:skeleton}. It contains 
a version of the medial axis transform adapted to our setting
as well as \autoref{theorem:theta-continuous2}.

\section{Parametrization of $\cE(M)$}\label{sec:domainE}

We give some basic definitions and introduce some notation used throughout the paper.

\begin{definition}\label{definition:unpp}
Let $M\subseteq \R^d$ be a nonempty set.
\begin{enumerate}[(i)]
\item  
For every point $x\in \R^d$ denote the distance between $x$ and $M$
by $d(x,M):=\inf\{\|x-\xi\|\colon \xi\in M\}$, where $\|\cdot\|$ is the Euclidean norm on $\R^d$.

We denote 
the distance function $\delta_M:\R^d\to {[0,\infty)}$ by 
$\delta_M(x)=d(x,M)$.

\item 
For  
$\varepsilon\in (0,\infty)$,
we denote the {\em $\varepsilon$-neighborhood} of $M$ by
\[
M^\varepsilon:=\{x\in \R^d: d(x,M)<\varepsilon\}\,
.\]

\item  
We define \[
\unpp(M):=\{x\in \R^d \colon
\,\exists!\, \xi\in M : \|x-\xi\|=d(x,M)\}\ .
\]
Thus there exists $p: \unpp(M)\to M$ such that 
for all $x\in \unpp(M)$ it holds that 
$p(x)$ is the unique nearest point to $x$ on $M$.
The function $p$ is called the (orthogonal) projection onto $M$.

A set $U\subseteq \R^d$ has the {\it unique nearest point property (unpp) with
respect to $M$} iff $U\subseteq \unpp(M)$.

\item Let 
\(\cE(M):=\bigcup\{U \subseteq \R^d\colon U \text{ is open and }U\subseteq \unpp(M)\}=\unpp(M)^\circ\)
be the maximal open set on which the function $p$ is defined.
\end{enumerate}
\end{definition}

\begin{notation*}[Balls]
For $x\in \R^d$ and $r\in (0,\infty)$ denote by
\begin{align*}
B_r(x)&:=\{z\in \R^d\colon \|x-z\|<r\}
\end{align*}
the open ball with center $x$ and radius $r$ and by
\begin{align*}
\bar B_r(x)&:=\{z\in \R^d\colon \|x-z\|\le r\}=\overline{B_r(x)}
\end{align*}
the closed ball. We denote the $(d-1)$-dimensional unit sphere by $S:=\bar B_1(0)\setminus B_1(0)$.
\end{notation*}

\begin{notation*}[Line segments]
For $x_1,x_2\in \R^d$ denote
\begin{align*}
{]}x_1,x_2{[}&:=\{(1-\lambda)x_1+\lambda x_2\colon \lambda\in (0,1)\}
\end{align*}
and let ${]}x_1,x_2],[x_1,x_2{[},[x_1,x_2]$ be the corresponding sets with
$(0,1)$ replaced by $(0,1],[0,1),[0,1]$, respectively.
\end{notation*}

\begin{definition}\label{defi:submanifold}
Let $d,m,k\in \N\cup\{0\}$, $m < d$,  and let $M\subseteq \R^d$.
We say $M$ is an $m$-dimensional $C^k$ {\em submanifold} of $\R^d$ iff 
for every $\xi\in M$ there exist open sets $U,V\subseteq \R^d$ 
and a $C^k$ diffeomorphism $\Psi:V\to U$ such that $\xi\in U$ and
for all $y=(y_1,\dots,y_d)\in V$ it holds
$\Psi(y)\in M \Longleftrightarrow y_{m+1}=\dots=y_d=0$. 
In the case where $k=0$, by a $C^0$ diffeomorphism we mean a homeomorphism,
and we also call $M$  a {\em topological submanifold}. For the case $k\geq 1$, we write $T_\xi(M)$ for the tangent space on $M$ in $\xi\in M$,
\[
T_\xi(M):=\{t\in \R^d \colon \exists \gamma:(-1,1)\to M \text{ a $C^1$ map with }\gamma(0)=\xi \text{ and }\gamma'(0)=t \}\,.
\]
\end{definition}

\begin{remark}
Usually, the case $m=d$ is not excluded in the definition of
a submanifold. However, for the questions considered here this
case is not very interesting: a $d$-dimensional submanifold $M\subseteq \R^d$
is an open subset of $\R^d$. Thus, no point in $\R^d\setminus M$ 
has a nearest point on $M$ and for every $x\in M$ we have $p(x)=x$,
so $\cE(M)=M$ and $p$ is $C^\infty$ on $\cE(M)$.

This means that the case $m=d$ is not interesting for the kind of questions we pursue in this article,
and we shall always assume $m<d$.
\end{remark}

\begin{remark}\label{remark:submanifold}
\begin{enumerate}
\item \label{item:remark_submanifold1}
If $M$ is a $C^k$ submanifold with $k\ge 0$, $\xi\in M$ and 
$\Psi:V\to U$ is a diffeomorphism as in \autoref{defi:submanifold},
then the map $\psi:\{y\in \R^m\colon (y_1,\dots,y_m,0,\dots,0)\in V\}\to \R^d$,
\[
\psi(y_1,\dots,y_m):=\Psi(y_1,\dots,y_m,0,\dots,0)
\]
is a (local) parametrization of $M$ with  $\xi$ in its image. We may always assume that $0\in V$ and $\xi=\psi(0)$.
\item \label{item:remark_submanifold2}
If $M$ is $C^1$ and $\xi\in M$ then, by virtue of the implicit function theorem, 
$M$ can be locally represented as the graph of
a $C^1$ function $\Phi$ from the tangent space $T_\xi(M)$ into
the corresponding normal space $T_\xi(M)^\perp$. 

More precisely,
there exist open sets  $W\subseteq T_\xi(M)$ and $U\subseteq T_\xi(M)^\perp$ 
with $0\in W\cap U$
and a $C^1$ function $\Phi:W\to U$ 
such that $\Phi(0)=0$ and 
$M\cap (\xi+W+U)=\{\xi+t+\Phi(t)\colon t\in W\}$.

One cannot generalize the statement to topological submanifolds, even if the
tangent space is replaced by some other linear space: consider as a 
counterexample $M:=\{x^2\colon x\in [0,\frac{1}{2})\}\cup \{x^3\colon x\in [0,\frac{1}{2})\}$.

\end{enumerate}
\end{remark}

The following definition corresponds to condition (3.3) in \cite{dudekholly}.
\begin{definition}\label{def:lip} Let $m\in \N\setminus \{0\}$.
Denote by $G(m,\R^d)$ the Grassmannian of
$m$-dimensional subspaces of the $\R^d$. 
For $T_1,T_2\in G(m,\R^d)$ their {\em Hausdorff distance}  is defined as
\[
d_H(T_1,T_2):=\sup\big\{\inf\{\|t_2-t_1\|\colon t_2\in T_2\cap S\}\colon t_1\in T_1\cap S\big\}\,.
\]

We say $M$ is an $m$-dimensional $\lip$ {\em submanifold} of $\R^d$ iff 
$M$ is an $m$-dimensional  $C^1$ submanifold of $\R^d$ and the map 
$M\to G(m,\R^d)$, $\xi\mapsto T_\xi(M)$ is locally Lipschitz w.r.t.~the Hausdorff distance,
i.e., if for all $\xi\in M$ there exists an open set $V\subseteq M$ and a positive 
constant $L$ such that 
$\xi\in V$ and $\forall \eta\in V\colon d_H\big(T_\xi(M),T_\eta(M)\big)\le L \|\xi-\eta\|$.
\end{definition}

\begin{definition} Let $k\ge 1$. For a $C^k$ manifold $M$ 
let \[\nu(M):=\{(\xi,v)\in \R^d\times \R^d:\xi\in M,v\perp T_\xi(M)\}\]
be the {\em normal bundle} for $M$. Moreover, let $$\nu_1(M):=\{(\xi,v)\in \nu(M)\colon \|v\|=1\}$$
and define the {\em endpoint map} $F:\nu(M)\to \R^d$ by 
$F(\xi,v):=\xi+v$.
\end{definition}

\begin{remark}\label{remark:F-Ck-1}
$\nu(M)$ is a $C^{k-1}$ manifold and thus $F$ is a $C^{k-1}$ function.
\end{remark}

The following result is a direct consequence of \citet[Theorem 3.8]{dudekholly}.

\begin{theorem}\label{theorem:dudek-holly}
Let $M \subseteq \R^d$ be a  $\lip$ submanifold and let 
$\xi \in M$.
Then  
$\xi$ has an open neighborhood $U$ in $\R^d$ so that $U\subseteq \unpp(M)$
and 
 for all $x\in U$, $\zeta\in U\cap M$ with $x-\zeta\perp T_\zeta(M)$ 
it holds $p(x)=\zeta$, i.e., $p(F(\zeta,x-\zeta))=\zeta$.
\end{theorem}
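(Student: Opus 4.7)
The plan is to work in local graph coordinates and exploit the quadratic control of $M$ provided by the $\lip$ hypothesis. By \autoref{remark:submanifold}(\ref{item:remark_submanifold2}), there exist open neighborhoods $W\subseteq T_\xi(M)$ and $U'\subseteq T_\xi(M)^\perp$ of the respective origins and a $C^1$ map $\Phi:W\to U'$ with $\Phi(0)=0$ and $D\Phi(0)=0$ (the latter because the graph is tangent to $T_\xi(M)$) whose image is $M\cap(\xi+W+U')$. The local Lipschitz condition on $\eta\mapsto T_\eta(M)$ translates into $D\Phi$ being locally Lipschitz, so after shrinking $W$ one obtains a constant $L$ with $\|D\Phi(t)\|\le L\|t\|$ and the quadratic estimate $\|\Phi(t)\|\le \tfrac{L}{2}\|t\|^2$ for all $t\in W$.

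Next I would fix $r>0$ small enough that $U:=B_r(\xi)\subseteq \xi+W+U'$ and also small enough that the nearest point on $M$ of any $x\in U$ is forced to lie in $U\cap M$. For $x=\xi+a+b$ with $a\in T_\xi(M)$ and $b\in T_\xi(M)^\perp$, the squared-distance function from $x$ to a generic point of the graph reads
\[
f(t)=\|a-t\|^2+\|b-\Phi(t)\|^2, \qquad t\in W,
\]
and its stationarity condition $Df(t)=0$ rearranges into the fixed-point equation
\[
t=a+D\Phi(t)^\top\bigl(b-\Phi(t)\bigr).
\]
Using the bounds $\|D\Phi(t)\|\le L\|t\|$ and $\|\Phi(t)\|\le \tfrac{L}{2}\|t\|^2$, the right-hand side is a strict contraction on a suitable closed ball in $W$ once $r$ is small enough. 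Banach's fixed-point theorem then yields a unique solution $t_*$, and coercivity of $f$ near the boundary of $W$ upgrades this to a unique global minimizer of $f$; hence a unique nearest point on $M$, i.e., $U\subseteq \unpp(M)$.

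For the final assertion, let $\zeta\in U\cap M$ and $x\in U$ with $x-\zeta\perp T_\zeta(M)$. Parametrizing $\zeta=\xi+t_*+\Phi(t_*)$, the orthogonality relation is exactly $Df(t_*)=0$, so $t_*$ satisfies the fixed-point equation above. By uniqueness of the fixed point, $t_*$ is the argmin of $f$, whence $p(x)=\zeta$, which is the same as $p(F(\zeta,x-\zeta))=\zeta$.

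The main obstacle is that under only the $\lip$ hypothesis there is no genuine second derivative of $f$, and $F$ is at best a $C^0$ map (its $C^{k-1}$ regularity in \autoref{remark:F-Ck-1} requires $k\ge 2$), so neither the classical inverse function theorem nor standard second-order convexity arguments are directly available. Everything has to be extracted from the single Lipschitz bound on $D\Phi$, which drives the ``quadratic beats linear'' balance underlying the contraction estimate. A secondary technical nuisance is the up-front smallness selection of $r$: it must simultaneously place $U$ inside the graph chart, make the fixed-point map contractive, and ensure that candidate far-away nearest points are ruled out.
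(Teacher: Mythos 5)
The paper never proves this statement: it is imported verbatim as ``a direct consequence of Dudek--Holly, Theorem 3.8'', so there is no internal proof to compare against, and your task was effectively to reconstruct the cited argument. Your reconstruction is sound in outline and follows the classical route (and the spirit of the cited source): the $\lip$ hypothesis, read as a pairwise local Lipschitz condition on $\eta\mapsto T_\eta(M)$, does translate -- after shrinking the chart so that $\|D\Phi\|\le 1$, where Hausdorff distance of graphs is comparable to the operator-norm distance of their slopes -- into $D\Phi(0)=0$, $D\Phi$ Lipschitz, hence $\|D\Phi(t)\|\le L\|t\|$ and $\|\Phi(t)\|\le\tfrac L2\|t\|^2$; the stationarity equation $t=a+D\Phi(t)^\top(b-\Phi(t))$ then has Lipschitz constant of order $Lr+L^2\rho^2$ on a ball $\bar B_\rho$, so it is a contraction once $r\ll\rho\ll 1/L$; and the single uniqueness statement for fixed points delivers both conclusions at once, because every nearest point of $x$ and every $\zeta\in U\cap M$ with $x-\zeta\perp T_\zeta(M)$ corresponds to a stationary parameter lying in the same ball ($\|t\|\le 2r<\rho$ in the first case by the triangle inequality, $\|t\|<r$ in the second). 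The items you flag as bookkeeping are exactly the ones that must be written out, and none is an obstruction: existence of a nearest point comes from local closedness of the submanifold (so $M\cap\bar B_{2r}(\xi)$ is compact) together with the triangle-inequality exclusion of points outside the chart; uniqueness then needs only the contraction inequality, not invariance of the ball; and your phrase that the nearest point lies in ``$U\cap M$'' should be weakened to ``in the chart and in the contraction ball'', which is all the argument uses. With those details filled in, the proposal is a correct proof of the theorem.
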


\begin{remark}\label{remark:first-order}
Note that \autoref{theorem:dudek-holly} implies that 
$M\subseteq \cE(M)$ if $M$ is $\lip$.

Note further that if $x\in \cE(M)\setminus M$, then 
$(x-p(x))\perp T_{p(x)}(M)$, since the sphere 
$\bar B_{x-p(x)}(x)\setminus B_{x-p(x)}(x)$ has a first order contact
with $M$ in $p(x)$.

\end{remark}

The next lemma  shows that unpp cannot be a property of isolated 
points. The lemma is further strengthened  by Lemmas 
\ref{lemma:line-segment-open} and \ref{lemma:extend-segment}. 

\begin{lemma}\label{lemma:line-segment}
Let 
$M\subseteq \R^d$, $x\in \R^d\setminus M$, and assume that there exists  
$\xi\in M$ with $\|x-\xi\|=d(x,M)$.

Then the line segment ${]}x,\xi]$ has 
the unpp w.r.t.~$M$, i.e., ${]}x,\xi]\subseteq \unpp(M)$,
and for every $z\in {]}x,\xi]$ it holds that $p(z)=\xi$.
\end{lemma}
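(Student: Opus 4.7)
The plan is to fix an arbitrary $z\in{]}x,\xi]$, written as $z=(1-\lambda)x+\lambda\xi$ with $\lambda\in(0,1]$, and show that $\xi$ is the unique nearest point of $M$ to $z$. The natural approach is to compare distances via the triangle inequality, leveraging the fact that $\xi$ is already known to be a nearest point to $x$.

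First I would record the two elementary identities $\|z-\xi\|=(1-\lambda)\|x-\xi\|$ and $\|x-z\|=\lambda\|x-\xi\|$, which follow directly from the parametric form of $z$. In particular, $d(z,M)\le\|z-\xi\|=(1-\lambda)\|x-\xi\|$. Next, suppose $\eta\in M$ is any competitor satisfying $\|z-\eta\|\le\|z-\xi\|$. By the triangle inequality,
\[
\|x-\eta\|\le \|x-z\|+\|z-\eta\|\le \lambda\|x-\xi\|+(1-\lambda)\|x-\xi\|=\|x-\xi\|=d(x,M).
\]
Since $\eta\in M$, the reverse inequality $\|x-\eta\|\ge d(x,M)$ holds as well, so equality must occur throughout.

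Equality in the triangle inequality forces $z$ to lie on the closed segment $[x,\eta]$, i.e.\ there exists $\mu\in[0,1]$ with $z-x=\mu(\eta-x)$. Comparing lengths, $\lambda\|x-\xi\|=\|z-x\|=\mu\|\eta-x\|=\mu\|x-\xi\|$, which gives $\mu=\lambda$ (note $\|x-\xi\|>0$ since $x\notin M$). Therefore $\lambda(\xi-x)=\lambda(\eta-x)$, and because $\lambda>0$ we conclude $\eta=\xi$. This shows simultaneously that $\xi$ realises $d(z,M)$ and that it is the unique minimiser, i.e.\ $z\in\unpp(M)$ and $p(z)=\xi$.

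No step seems to pose a serious obstacle; the only mildly delicate point is the use of the equality case of the triangle inequality in $\R^d$, where equality $\|x-\eta\|=\|x-z\|+\|z-\eta\|$ with $z\ne x$ forces $\eta-x$ to be a nonnegative multiple of $z-x$. This is standard but worth stating explicitly so that the identification $\mu=\lambda$ and hence $\eta=\xi$ is rigorous.
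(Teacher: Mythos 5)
Your proof is correct and follows essentially the same route as the paper's: compare distances via the triangle inequality through $z$, force equality, and use the equality case of the triangle inequality together with the parametrization of the segment to conclude $\eta=\xi$. The only cosmetic difference is that the paper runs the same argument with competitors $\eta\in\overline{M}$ (via continuity of the distance function), which yields a slightly stronger conclusion it reuses later, but for the statement as given your argument over $M$ suffices.
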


\begin{proof}

Let $z\in{]}x,\xi]$. For $z=\xi$ the assertion is obvious.
Now consider the case $z\ne \xi$, and let $\eta\in \overline{M}$.
We have 
\begin{align}
\nonumber\|x-\xi\|&=\|x-z\|+\|z-\xi\|\\
\label{eq:triangle}\|x-\eta\|&\le\|x-z\|+\|z-\eta\|\,.
\end{align}
By the continuity of the distance function we have $\|x-\xi\|\le \|x-\eta\|$,
so that $\|z-\xi\|\le\|z-\eta\|$. 

Suppose $\|z-\xi\|=\|z-\eta\|$.
Thus equality holds in 
\eqref{eq:triangle},  which implies $z\in [x,\eta]$. 
$z\ne x$ by assumption and $z\ne \eta$ because,  $\|z-\eta\|=\|z-\xi\|>0$.
Thus $z=\lambda\eta+(1-\lambda)x$ for some $\lambda\in (0,1)$. On the other
hand $ z=\mu\xi+(1-\mu)x$ for some $\mu\in (0,1)$, so that 
\[
\eta-z=\frac{1-\lambda}{\lambda}(z-x)
\text{ and }
\xi-z=\frac{1-\mu}{\mu}(z-x)\,.
\]
From $\|z-\xi\|=\|z-\eta\|$ we conclude $\lambda=\mu$ and thus $\eta=\xi$.
\end{proof}

In \cite[Theorem 4.8]{federer1957} it is shown that for every closed set $M$
the projection map 
$p$ onto $M$ is continuous on every set where it is well-defined.
The subsequent lemma is a version for which closedness
is not needed; for every subset $M\subseteq\R^d$ the projection 
$p$ is continuous on every {\em open} set on which it
is well-defined.
The following result can be found in  \cite[Theorem 1.3]{dudekholly}.

\begin{proposition}[Continuity of $p$]\label{lemma:continuous}
Let $M\subseteq \R^d$,
$U\subseteq \R^d$ be an open set with $U\subseteq \unpp(M)$, 
and let $p \colon U\to M$ denote
the corresponding projection map.
Then $p$ is continuous.
\end{proposition}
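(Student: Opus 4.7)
The plan is to establish sequential continuity at an arbitrary point $x_0 \in U$: given $x_n \to x_0$ with $x_n\in U$, I will show $p(x_n)\to p(x_0)$. By the standard subsequence principle, it suffices to prove that every subsequence of $(p(x_n))_n$ admits a further subsequence converging to $p(x_0)$. So I pick an arbitrary subsequence, which I still denote $(p(x_n))_n$.

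First I would verify that $(p(x_n))_n$ is bounded. Since $\delta_M$ is $1$-Lipschitz, $\|x_n-p(x_n)\| = \delta_M(x_n)$ converges to $\delta_M(x_0)$, and in particular is bounded. Combining with boundedness of $(x_n)_n$ and the triangle inequality gives $\|p(x_n)\|\le \|x_n\|+\|x_n-p(x_n)\|$ bounded. Hence I may extract a subsequence $p(x_{n_k})\to\eta$ for some $\eta\in\overline M$. Passing to the limit in $\|x_{n_k}-p(x_{n_k})\|=\delta_M(x_{n_k})$ yields $\|x_0-\eta\|=\delta_M(x_0)=\|x_0-p(x_0)\|$.

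The heart of the proof is to show $\eta=p(x_0)$, and this is the main obstacle: a priori $\eta$ only lies in $\overline M$, not in $M$, so the unpp of $x_0$ does not immediately apply. This is where the openness of $U$ becomes crucial. For $\lambda\in(0,1)$ set $z_\lambda := (1-\lambda)x_0 + \lambda\eta$. Choosing a sequence $m_j\in M$ with $m_j\to\eta$, the inequality $\delta_M(z_\lambda)\le\|z_\lambda-m_j\|$ passes to the limit and yields $\delta_M(z_\lambda)\le (1-\lambda)\delta_M(x_0)$; the $1$-Lipschitz property gives the matching lower bound, so that $\delta_M(z_\lambda)=(1-\lambda)\delta_M(x_0)$. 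Because $U$ is open and $x_0\in U$, we have $z_\lambda\in U$ for all sufficiently small $\lambda>0$, and hence $p(z_\lambda)\in M$ is defined and unique. The triangle inequality
\begin{equation*}
\|x_0-p(z_\lambda)\|\le \|x_0-z_\lambda\|+\|z_\lambda-p(z_\lambda)\| = \lambda\,\delta_M(x_0)+(1-\lambda)\,\delta_M(x_0) = \delta_M(x_0)
\end{equation*}
together with $\|x_0-p(z_\lambda)\|\ge\delta_M(x_0)$ forces equality, so $p(z_\lambda)\in M$ realizes the distance from $x_0$ and thus, by unpp, $p(z_\lambda)=p(x_0)$. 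Equality in the triangle inequality additionally forces $z_\lambda$ to lie on the segment $[x_0,p(x_0)]$, while by construction $z_\lambda\in{]}x_0,\eta]$; comparing the two rays emanating from $x_0$ (and using $\|x_0-\eta\|=\|x_0-p(x_0)\|$) gives $\eta=p(x_0)$.

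Consequently every convergent subsequence of $(p(x_n))_n$ has limit $p(x_0)$, so $p(x_n)\to p(x_0)$ and $p$ is continuous on $U$. The only nontrivial ingredient beyond elementary triangle-inequality manipulations is the use of openness of $U$ in constructing the auxiliary points $z_\lambda$ to rule out the possibility $\eta\in\overline M\setminus M$.
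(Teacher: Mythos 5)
Your proof is correct. Note that the paper does not prove this proposition itself but imports it from \citet[Theorem 1.3]{dudekholly}, so your argument is a genuinely self-contained alternative. Its structure --- extract a convergent subsequence $p(x_{n_k})\to\eta\in\overline M$, then rule out $\eta\in\overline M\setminus M$ and force $\eta=p(x_0)$ by inspecting auxiliary points $z_\lambda$ strictly between $x_0$ and $\eta$ --- is sound: the identity $\delta_M(z_\lambda)=(1-\lambda)\delta_M(x_0)$, the equality case of the triangle inequality, and the unpp of $x_0$ together pin down $p(z_\lambda)=p(x_0)$ and collinearity, and with $\|x_0-\eta\|=\|x_0-p(x_0)\|$ this gives $\eta=p(x_0)$. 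This is close in spirit to the paper's \autoref{lemma:line-segment}, which likewise compares against points of $\overline M$ along a segment, and your use of openness of $U$ is exactly where it is indispensable (the paper's \autoref{ex:lemma_cont} shows continuity can fail at points of $\unpp(M)$ not interior to it). Two cosmetic remarks: the degenerate case $\delta_M(x_0)=0$ (where $x_0\in M$, hence $\eta=x_0=p(x_0)$ immediately and the ``two rays'' comparison is vacuous) deserves one explicit sentence; and for $\lambda\in(0,1)$ one has $z_\lambda\in{]}x_0,\eta{[}$, though writing ${]}x_0,\eta]$ is harmless.
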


\begin{example}\label{ex:lemma_cont}
Consider the following example of a non-compact submanifold:
\begin{center}
\begin{picture}(0,0)%
\includegraphics{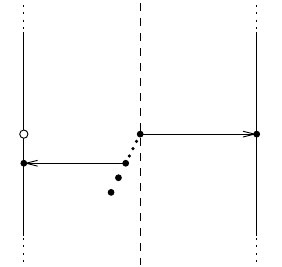}%
\end{picture}%
\setlength{\unitlength}{3947sp}%
\begingroup\makeatletter\ifx\SetFigFont\undefined%
\gdef\SetFigFont#1#2#3#4#5{%
  \reset@font\fontsize{#1}{#2pt}%
  \fontfamily{#3}\fontseries{#4}\fontshape{#5}%
  \selectfont}%
\fi\endgroup%
\begin{picture}(2243,2120)(235,-3969)
\put(2346,-2560){\makebox(0,0)[lb]{\smash{{\SetFigFont{9}{10.8}{\rmdefault}{\mddefault}{\updefault}{\color[rgb]{0,0,0}$\ell_3$}%
}}}}
\put(2346,-2938){\makebox(0,0)[lb]{\smash{{\SetFigFont{9}{10.8}{\rmdefault}{\mddefault}{\updefault}{\color[rgb]{0,0,0}$\proj(x)$}%
}}}}
\put(366,-3200){\makebox(0,0)[rb]{\smash{{\SetFigFont{9}{10.8}{\rmdefault}{\mddefault}{\updefault}{\color[rgb]{0,0,0}$\proj(x_n)$}%
}}}}
\put(1211,-3084){\makebox(0,0)[rb]{\smash{{\SetFigFont{9}{10.8}{\rmdefault}{\mddefault}{\updefault}{\color[rgb]{0,0,0}$x_n$}%
}}}}
\put(1415,-2851){\makebox(0,0)[lb]{\smash{{\SetFigFont{9}{10.8}{\rmdefault}{\mddefault}{\updefault}{\color[rgb]{0,0,0}$x$}%
}}}}
\put(250,-3666){\makebox(0,0)[lb]{\smash{{\SetFigFont{9}{10.8}{\rmdefault}{\mddefault}{\updefault}{\color[rgb]{0,0,0}$\ell_2$}%
}}}}
\put(250,-2560){\makebox(0,0)[lb]{\smash{{\SetFigFont{9}{10.8}{\rmdefault}{\mddefault}{\updefault}{\color[rgb]{0,0,0}$\ell_1$}%
}}}}
\put(2463,-3375){\makebox(0,0)[lb]{\smash{{\SetFigFont{9}{10.8}{\rmdefault}{\mddefault}{\updefault}{\color[rgb]{0,0,0}$M=\ell_1\cup \ell_2\cup \ell_3$}%
}}}}
\end{picture}%

\end{center}
Here the projection is not continuous in $x$. On the other hand, there 
is no open set $U$ containing $x$ and having the unpp.
\end{example}

\medskip 
Also the next result can be found in \cite[Theorem 1.5.(ii)]{dudekholly}.

\begin{lemma} \label{lemma:line-segment-open}
Let $M\subseteq \R^d$ and $x\in \R^d\setminus M$ such that there exists an open set $U\subseteq \unpp(M)$ containing $x$. 
Then there exists an open set $\hat U\subseteq \unpp(M)$ that contains ${]}x,p(x){[}$.
\end{lemma}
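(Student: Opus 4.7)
The plan is to build $\hat U$ as the union of the images of the affine combination maps
\[
\Phi_\lambda \colon U \to \R^d,\qquad \Phi_\lambda(y) := (1-\lambda)\,y + \lambda\, p(y),\qquad \lambda\in[0,1).
\]
By \autoref{lemma:continuous} the projection $p$ is continuous on $U$, so each $\Phi_\lambda$ is continuous. By \autoref{lemma:line-segment}, for every $y\in U$ the segment ${]}y,p(y){]}$ has the unpp with projection $p(y)$, so $\Phi_\lambda(U)\subseteq \unpp(M)$. Moreover, $\Phi_\lambda(x)=(1-\lambda)x+\lambda p(x)$ traces out the whole open segment ${]}x,p(x){[}$ as $\lambda$ ranges over $(0,1)$, so any open superset of $\bigcup_\lambda \Phi_\lambda(U)$ will contain the segment we need.

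The heart of the argument is to show that $\Phi_\lambda$ is injective on $U$ for every $\lambda\in[0,1)$, after which Brouwer's invariance of domain immediately implies that $\Phi_\lambda(U)$ is open in $\R^d$. The case $\lambda=0$ is the identity. For $\lambda\in(0,1)$, suppose $\Phi_\lambda(y_1)=\Phi_\lambda(y_2)=w$. If both $y_1,y_2\notin M$, then $w\in{]}y_i,p(y_i){[}$ for $i=1,2$, so \autoref{lemma:line-segment} forces $p(y_1)=p(w)=p(y_2)=:\xi$; the identity $(1-\lambda)y_1+\lambda\xi=(1-\lambda)y_2+\lambda\xi$ then collapses to $y_1=y_2$. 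If $y_i\in M$ then $p(y_i)=y_i$ and $\Phi_\lambda(y_i)=y_i$, so $w=y_i\in M$; combined with a companion $y_j\notin M$ this contradicts \autoref{lemma:line-segment}, which would require $p(w)=p(y_j)\neq w$. The remaining case $y_1,y_2\in M$ trivially gives $y_1=w=y_2$.

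With injectivity in hand, invariance of domain guarantees that each $\Phi_\lambda(U)$ is open in $\R^d$. Setting
\[
\hat U := \bigcup_{\lambda\in[0,1)}\Phi_\lambda(U),
\]
one obtains an open subset of $\unpp(M)$ which contains $(1-\lambda)x+\lambda p(x)$ for every $\lambda\in(0,1)$, hence contains ${]}x,p(x){[}$.

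The main obstacle I expect is identifying the right map to feed to invariance of domain; once one freezes the convex combination parameter $\lambda$ and varies only $y$, the rigidity supplied by \autoref{lemma:line-segment} (two distinct ``radial'' segments $[y,p(y)]$ can only meet at their common projection) delivers injectivity almost for free. The small bookkeeping around points of $U$ that already lie on $M$ is routine but should be spelled out, and the only external tool invoked is Brouwer's invariance of domain.
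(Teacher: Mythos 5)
Your proof is correct. The key points all check out: for $\lambda\in(0,1)$ and $y\in U\setminus M$ the point $\Phi_\lambda(y)$ lies in ${]}y,p(y){]}$, so \autoref{lemma:line-segment} gives both $\Phi_\lambda(y)\in\unpp(M)$ and $p(\Phi_\lambda(y))=p(y)$, which is exactly the rigidity you need for injectivity; the degenerate cases $y\in M\cap U$ (where $\Phi_\lambda(y)=y\in U\subseteq\unpp(M)$) are handled correctly; continuity of $\Phi_\lambda$ follows from \autoref{lemma:continuous}; and invariance of domain then makes each slice $\Phi_\lambda(U)$ open, so the union over $\lambda\in[0,1)$ is an open subset of $\unpp(M)$ containing ${]}x,p(x){[}$. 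Note that the paper itself does not prove this lemma: it is quoted from \citet[Theorem 1.5.(ii)]{dudekholly}, so there is no in-text argument to compare against. Your argument is a clean, self-contained replacement that uses only ingredients already present in the paper (\autoref{lemma:line-segment}, \autoref{lemma:continuous}, and Brouwer's invariance of domain, a tool the authors invoke elsewhere, e.g.\ in the proofs of \autoref{lemma:normal-vectors} and \autoref{theorem:pr-lip}), and it works for arbitrary subsets $M$, with no closedness or regularity assumptions, matching the generality of the statement. The only cosmetic remark is that the sentence about ``any open superset of $\bigcup_\lambda\Phi_\lambda(U)$'' is superfluous, since you take $\hat U$ to be that union itself; and it is worth stating explicitly, as you do implicitly, that membership $\Phi_\lambda(U)\subseteq\unpp(M)$ splits into the trivial case $y\in M$ and the \autoref{lemma:line-segment} case $y\notin M$.
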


\begin{lemma} \label{lemma:extend-segment}
Let $M\subseteq \R^d$  and  
$x\in \R^d\setminus M$ such that there exists an open set 
$U\subseteq \unpp(M)$ containing $x$. 
Then there exists $a\in(1,\infty)$ such that 
\[p(x)+a(x-p(x))\in U \text{ and }p(p(x)+a(x-p(x)))=p(x)\,.\]
\end{lemma}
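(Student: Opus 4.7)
The plan is to reformulate the sought equality $p(y_a) = \xi$ as an inequality in $a$ and to find some $a > 1$ satisfying it. Writing $\xi := p(x)$, $v := x - \xi$, and $y_a := \xi + a v$, the expansion
\[
\|y_a - \eta\|^2 - \|y_a - \xi\|^2 \;=\; \|\eta - \xi\|^2 - 2a \langle v, \eta - \xi\rangle
\]
shows that, for $y_a \in U \subseteq \unpp(M)$, the equality $p(y_a) = \xi$ holds iff the right-hand side is nonnegative for every $\eta \in M$. Setting
\[
a^* := \inf\Big\{\tfrac{\|\eta - \xi\|^2}{2\langle v, \eta - \xi\rangle}\;:\;\eta \in M\setminus\{\xi\},\ \langle v, \eta - \xi\rangle > 0\Big\}
\]
(with the convention $\inf\emptyset = +\infty$), the desired condition becomes $a \le a^*$. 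Unique nearness of $\xi$ to $x$ translates to $\|\eta - \xi\|^2 > 2\langle v, \eta - \xi\rangle$ for every relevant $\eta$, so $a^* \ge 1$. Since $U$ is open, $y_a \in U$ for $a$ in an interval $(1, 1+\delta)$; thus if I can show $a^* > 1$, any $a \in (1, \min\{a^*, 1+\delta\})$ completes the proof.

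The task therefore reduces to excluding $a^* = 1$. I plan to proceed by contradiction. Assuming $a^* = 1$, pick $\eta_n \in M \setminus \{\xi\}$ with $\|\eta_n - \xi\|^2 / (2\langle v, \eta_n - \xi\rangle) \to 1$. From $\|x - \eta_n\|^2 - \|v\|^2 = \|\eta_n - \xi\|^2 - 2\langle v, \eta_n - \xi\rangle \to 0$, the sequence $(\eta_n)$ is bounded, so after passing to a subsequence $\eta_n \to \eta^* \in \overline M$ with $\|x - \eta^*\| = \|v\|$. Unique nearness of $\xi$ to $x$ excludes $\eta^* \in M \setminus \{\xi\}$, leaving two cases: (A) $\eta^* = \xi$ (so $\eta_n \to \xi$), or (B) $\eta^* \in \overline M \setminus M$ with $\eta^* \ne \xi$.

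Case (B) is handled directly: the limit of the defining relation gives $\|\eta^* - \xi\|^2 = 2\langle v, \eta^* - \xi\rangle$, placing $x$ on the perpendicular bisector of $\xi$ and $\eta^*$. Perturbing $x$ a small distance $\varepsilon > 0$ toward $\eta^*$ produces $y_\varepsilon \in U$ with $\|y_\varepsilon - \eta^*\| < \|y_\varepsilon - \xi\|$ and $\|y_\varepsilon - \eta_n\| \to \|y_\varepsilon - \eta^*\|$; since $\eta^* \notin M$, either $d(y_\varepsilon, M)$ is approached but unattained in $M$ (contradicting $y_\varepsilon \in \unpp(M)$) or a new sequence in $M$ approaches $\xi$ and pushes the analysis into Case (A). The remaining Case (A), where $(\eta_n - \xi)/\|\eta_n - \xi\| \to u^* \perp v$, is the main obstacle. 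My plan is to exploit the perpendicular bisector of $\xi$ and $\eta_n$: it crosses the ray $\{y_t\}$ at some $t_n \in (1, 1+o(1))$, so $q_n := y_{t_n}$ lies in $U$ for large $n$ and is equidistant from $\xi$ and $\eta_n$; since $q_n \in \unpp(M)$, there must be a strictly closer $\eta'_n := p(q_n) \in M\setminus\{\xi,\eta_n\}$ that in turn converges to $\xi$ by continuity of $p$. The delicate step—which I expect to be the heart of the proof—is to iterate this construction, combined with a compactness argument on unit directions in $v^\perp$, to produce a point in $U$ with two genuinely equidistant nearest points in $M$, delivering the required failure of $\unpp$.
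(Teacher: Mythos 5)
Your reduction of the claim to the inequality $a^*>1$, where
\[
a^* = \inf\Big\{\tfrac{\|\eta-\xi\|^2}{2\langle v,\eta-\xi\rangle}\colon \eta\in M\setminus\{\xi\},\ \langle v,\eta-\xi\rangle>0\Big\},
\]
is correct, and the dichotomy $\eta^*=\xi$ versus $\eta^*\in\overline M\setminus M$ is the right one (Case (B), though described loosely, can be closed: since $y_\varepsilon\in\unpp(M)$ a nearest point $\zeta_\varepsilon$ \emph{does} exist, and the triangle-inequality squeeze $d(y_\varepsilon,M)=d(x,M)-\varepsilon$ forces $\zeta_\varepsilon=\xi$, hence collinearity and $\eta^*=\xi$, a contradiction). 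The genuine gap is Case (A), which you yourself flag as ``the heart of the proof'' and leave as a plan rather than an argument. Worse, the proposed iteration does not visibly make progress: if $q_n=\xi+t_nv$ is equidistant from $\xi$ and $\eta_n$, then $\eta_n'=p(q_n)$ satisfies $\|\eta_n'-\xi\|^2<2t_n\langle v,\eta_n'-\xi\rangle$ together with $\|\eta_n'-\xi\|^2>2\langle v,\eta_n'-\xi\rangle$, so its ratio again lies in $(1,t_n)$ and tends to $1$. You merely reproduce another minimizing sequence for $a^*$; there is no descent, and no mechanism is given for ever producing a point of $U$ with two \emph{exactly} equidistant nearest points. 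Since the lemma assumes nothing about $M$ beyond $U\subseteq\unpp(M)$, a configuration of points $\eta_n\to\xi$ hugging the sphere $\partial B_{\|v\|}(x)$ to second order with ratios $t_n\downarrow 1$ is perfectly consistent with all the local information you extract; ruling it out requires using the unpp property on an entire neighborhood in an essentially topological way.

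That is exactly what the paper's proof does, and it is worth comparing: it takes a small closed ball $B=\bar B_\varepsilon(p(x))$, maps each $y\in B$ to the point $f(y)$ where the line through $x$ and $y$ meets a small disk $\cD$ beyond $x$, and applies Brouwer's fixed point theorem to $g=p\circ f\colon B\to B$. A fixed point $y_0$ satisfies $p(f(y_0))=y_0$ with $x\in{]}f(y_0),y_0{[}$, so \autoref{lemma:line-segment} gives $y_0=p(x)$ and $f(y_0)$ is the desired point $p(x)+a(x-p(x))$. This sidesteps your case analysis entirely; note also that the companion result \autoref{lemma:wurscht} rests on Borsuk--Ulam, which suggests that some fixed-point or degree-theoretic input is hard to avoid here. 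As it stands, your argument proves the lemma only under the additional (unjustified) assumption that Case (A) cannot occur, so it is incomplete.
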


\begin{proof}
Consider a closed ball $B:=\bar B_\varepsilon(p(x))$ with $\varepsilon\le\frac{\sqrt{3}}{2}\|x-p(x)\|$. 
Then for every $y\in B$ it holds 
that the (unsigned) angle
$\ang(x-y,x-p(x))$ lies in the interval $[0,\frac{\pi}{3}]$. 
By the continuity of $p$ (\autoref{lemma:continuous}) there exists $\delta\in(0,\|x-p(x)\|-\varepsilon)$ such that
$\forall z\in\R^d\colon \|z-x\|\le\delta 
\Rightarrow \big(z\in U \text{ and } p(z)\in B\big)$.
Let 
\begin{align*}
\cD:=\big\{z\in \R^d\colon \exists v \in \R^d\colon z=x+\tfrac{1}{2}\delta&\|x-p(x)\|^{-1}(x-p(x))+v,\\
&\langle x-p(x), v\rangle=0  \text{ and } \|x-z\|\le \delta\big\}\,,
\end{align*}
i.e., $\cD$ is the $(d-1)$-dimensional closed ball which is orthogonal to 
$x-p(x)$, lies on the side of $x$ opposing $p(x)$, and 
has the property that, for all $z\in \cD$, the angle $\ang(z-x,x-p(x))$  lies in the interval $[0,\frac{\pi}{3}]$.

\begin{center}
\begin{picture}(0,0)%
\includegraphics{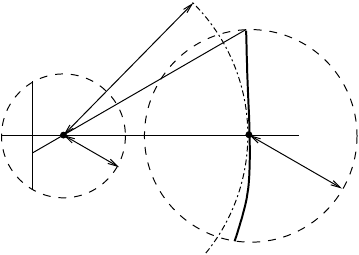}%
\end{picture}%
\setlength{\unitlength}{4144sp}%
\begingroup\makeatletter\ifx\SetFigFont\undefined%
\gdef\SetFigFont#1#2#3#4#5{%
  \reset@font\fontsize{#1}{#2pt}%
  \fontfamily{#3}\fontseries{#4}\fontshape{#5}%
  \selectfont}%
\fi\endgroup%
\begin{picture}(2729,1932)(4649,-4344)
\put(6690,-3355){\makebox(0,0)[b]{\smash{{\SetFigFont{7}{8.4}{\rmdefault}{\mddefault}{\updefault}{\color[rgb]{0,0,0}$\proj(x)$}%
}}}}
\put(5110,-3368){\makebox(0,0)[b]{\smash{{\SetFigFont{7}{8.4}{\rmdefault}{\mddefault}{\updefault}{\color[rgb]{0,0,0}$x$}%
}}}}
\put(6596,-2892){\makebox(0,0)[b]{\smash{{\SetFigFont{7}{8.4}{\rmdefault}{\mddefault}{\updefault}{\color[rgb]{0,0,0}$M$}%
}}}}
\put(4826,-3233){\makebox(0,0)[b]{\smash{{\SetFigFont{7}{8.4}{\rmdefault}{\mddefault}{\updefault}{\color[rgb]{0,0,0}$\cD$}%
}}}}
\put(6926,-3764){\makebox(0,0)[b]{\smash{{\SetFigFont{7}{8.4}{\rmdefault}{\mddefault}{\updefault}{\color[rgb]{0,0,0}$\varepsilon$}%
}}}}
\put(5274,-3627){\makebox(0,0)[b]{\smash{{\SetFigFont{7}{8.4}{\rmdefault}{\mddefault}{\updefault}{\color[rgb]{0,0,0}$\delta$}%
}}}}
\put(5558,-2701){\makebox(0,0)[b]{\smash{{\SetFigFont{7}{8.4}{\rmdefault}{\mddefault}{\updefault}{\color[rgb]{0,0,0}$\|x-\proj(x)\|$}%
}}}}
\end{picture}%

\end{center}

Therefore, every line spanned by
$y\in B$ and $x$ has precisely one intersection with $\cD$.
Define this as $f(y)$ and consider the mapping $g:B\to B$ defined
by $g(y):=p(f(y))$. Note that $f$ is continuous, so $g$ is
a continuous mapping from $B\to B$. Since $B$ is homeomorphic
to the unit ball in $\R^d$, there exists a fixed point $y_0$ of $g$
by Brouwer's fixed
point theorem, 
i.e., there exists $y_0\in B$ such that $g(y_0)=y_0$. 
Note that $y_0\in M$ by the definition of $g$.

Now $p(f(y_0))=y_0$ and $x\in {]}f(y_0),y_0{[}$. By 
\autoref{lemma:line-segment} we therefore have $p(x)=y_0$
and thus $p(f(y_0))=p(x)$. 

We conclude the proof by noting that $f(y_0)$ is of 
the desired form $f(y_0)=p(x)+a(x-p(x))$ with $a\in (1,\infty)$.
\end{proof}

\begin{remark}
Let us revisit Example 1. The point $x$ lies in $\unpp(M)$
and $]x,p(x)[\subseteq \cE(M)$. However, $x\notin \cE(M)$, and also
there does not exist
$a\in(1,\infty)$ such that 
$p(p(x)+a(x-p(x)))=p(x)$.
Therefore, the assumption made in \autoref{lemma:extend-segment},
that $x$ be contained in some open set in $\unpp(M)$, is necessary.
\end{remark}

\begin{example}
Consider the set $M:=\{(x,x^2)\in \R^2\colon x\ge 0\}$.
Routine calculations yield that 
\begin{align*}
\cE(M)&=\R^2\setminus\Big\{(x,y)\colon x\le 0,\ 
y=\tfrac{1+3(x^2)^\frac{1}{3}}{2}\Big\}\,,\\
\unpp(M)&=\cE(M)\cup\{(0,\tfrac{1}{2})\}\,.
\end{align*}
The point $(0,\frac{1}{2})$ has a unique closest point on 
$M$, namely $(0,0)$, but does not lie in $\cE(M)$.
Of course, for all $y\in [0,\frac{1}{2}]$ we have $p\big((0,y)\big)=(0,0)=p\big((0,\frac{1}{2})\big)$,
but for $y\in \big[\frac{1}{2},\infty)$ it holds
$p\big((0,y)\big)=\big(\sqrt{\frac{2y-1}{4}},\frac{2y-1}{4}\big)$.

In this example $p$ is continuous on 
$\unpp(M)\ne \cE(M)$, but still
the conclusion of \autoref{lemma:extend-segment} does not hold
since there is no open set $U\subseteq \unpp(M)$ with $x\in U$.
\end{example}

\begin{definition}\label{def:nustar}
Let $M$ be a $\lip$ submanifold of $\R^d$. 
\begin{enumerate}
\item  We define 
\begin{align*}
\nu^*(M)&:=\big\{(\xi,v)\in \nu(M)\colon \big{[}\xi,\xi+v\big{]}\subseteq \cE(M)\big\}\,,
\end{align*}
and $F^*:\nu^*(M)\to \cE(M)$  by $F^*(\xi,v):=\xi+v$,
the restriction of the endpoint map to $\nu^*(M)$.
\item 
We define $\vartheta\colon\nu_1(M)\to (0,\infty],$ $\vartheta(\xi,v):=\sup\{r> 0\colon {]}\xi,\xi+rv{[}\subseteq\cE(M)\}$ and call $\vartheta$ the {\em frontier function} for $M$.
\end{enumerate}
\end{definition}

\begin{proposition}\label{proposition:thetanu}
Let $M$ be a $\lip$ submanifold of $\R^d$. Then
\begin{enumerate}
\item\label{item:thetanu1} the frontier function $\vartheta$ is well-defined,

\item\label{item:thetanu2} for all $(\xi,v)\in \nu_1(M)$ and all $r\in \big[0,\vartheta(\xi,v)\big)$ 
it holds $p(\xi+rv)=\xi$,
\item\label{item:thetanu4} \(
\nu^*(M)=\big\{(\xi,rv)\colon(\xi,v)\in\nu_1(M), r\in\big[0,\vartheta(\xi,v)\big)\big\}\,,
\) 
\item\label{item:thetanu5}  for every $x\in \cE(M)$ we have $[x,p(x)]\subseteq\cE(M)$.
\end{enumerate}
\end{proposition}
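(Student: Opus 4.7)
I would prove the four items by dispatching well-definedness and the segment property directly, then extracting a single ``key claim'' that drives both the projection identity and the $\nu^*$ characterization.

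\emph{Well-definedness and segment property.} Since $M$ is $\lip$, \autoref{theorem:dudek-holly} places $\xi$ in the open set $\cE(M)$; for $(\xi,v)\in\nu_1(M)$ and small enough $r>0$, the segment $]\xi,\xi+rv[$ lies in a ball around $\xi$ contained in $\cE(M)$, making the defining supremum nonempty, so $\vartheta(\xi,v)\in(0,\infty]$. For the segment property, given $x\in\cE(M)\setminus M$, I would invoke \autoref{lemma:extend-segment} to produce $a>1$ with $x' := p(x)+a(x-p(x))\in\cE(M)$ and $p(x')=p(x)$; \autoref{lemma:line-segment-open} applied at $x'$ would then furnish an open subset of $\cE(M)$ containing $]p(x),x'[$. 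Since $x\in\,]p(x),x'[$, one obtains $]p(x),x]\subseteq\cE(M)$, and adding the endpoint $p(x)\in M\subseteq\cE(M)$ yields $[x,p(x)]\subseteq\cE(M)$. The case $x\in M$ is trivial.

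\emph{Key claim.} For every $(\xi,v)\in\nu_1(M)$ and every $r_0>0$ with $[\xi,\xi+r_0v]\subseteq\cE(M)$, I claim that $p(\xi+sv)=\xi$ for all $s\in[0,r_0]$. The strategy is a supremum argument on $A:=\{s\in[0,r_0]:p(\xi+sv)=\xi\}$. First, \autoref{theorem:dudek-holly} (taking $\zeta=\xi$ and using $sv\perp T_\xi(M)$) will give $A\supseteq[0,\epsilon]$ for some $\epsilon>0$; \autoref{lemma:continuous} will make $A$ closed in $[0,r_0]$; \autoref{lemma:line-segment} will make $A$ downward closed. Thus $A=[0,s^*]$ with $s^*:=\max A\geq\epsilon>0$. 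Supposing $s^*<r_0$ for contradiction, \autoref{lemma:extend-segment} at $\xi+s^*v\in\cE(M)$ gives $a>1$ with $p(\xi+as^*v)=\xi$; since $s^*>0$ forces $\xi+as^*v\notin M$ (else $p$ of it would be itself, not $\xi$), \autoref{lemma:line-segment} propagates $p\equiv\xi$ along $[\xi,\xi+as^*v]$, so $\min(as^*,r_0)\in A$ strictly exceeds $s^*$, a contradiction.

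\emph{Projection identity and $\nu^*$ characterization.} For the projection identity, given $r\in[0,\vartheta(\xi,v))$, I would pick $r_0\in(r,\vartheta(\xi,v))$; the supremum definition (applied once more at a slightly larger $r_0'<\vartheta(\xi,v)$) gives $\xi+r_0v\in\cE(M)$ and $]\xi,\xi+r_0v[\subseteq\cE(M)$, so $[\xi,\xi+r_0v]\subseteq\cE(M)$ and the key claim delivers $p(\xi+rv)=\xi$. For the $\nu^*$ characterization, $\supseteq$ combines the projection identity with the segment property. For $\subseteq$, given $(\xi,w)\in\nu^*(M)$ with $w\ne 0$, I would write $w=rv$ with $r=\|w\|$, $v\in S$; the key claim gives $p(\xi+rv)=\xi$, and one more pass through \autoref{lemma:extend-segment} and \autoref{lemma:line-segment-open} would yield $a>1$ with $]\xi,\xi+arv[\subseteq\cE(M)$, so $\vartheta(\xi,v)\ge ar>r$; the case $w=0$ is immediate. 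The main obstacle is the key claim, where all four ingredients --- initial positivity, continuity-based closedness, downward-closedness, and strict extension --- must interlock correctly to rule out $s^*<r_0$.
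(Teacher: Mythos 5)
Your proposal is correct and takes essentially the same route as the paper: the core of item (2) is the same supremum-and-extend contradiction along the normal ray, built from \autoref{theorem:dudek-holly}, continuity of $p$ (\autoref{lemma:continuous}), \autoref{lemma:line-segment}, \autoref{lemma:extend-segment} and \autoref{lemma:line-segment-open}. The differences are only organizational: you package this as a ``key claim'' on a compact segment $[\xi,\xi+r_0v]\subseteq\cE(M)$, spell out the $\nu^*$-characterization that the paper dismisses as obvious, and route the segment property through \autoref{lemma:extend-segment} where applying \autoref{lemma:line-segment-open} directly at $x$ already suffices.
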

\begin{proof}
\noindent{\em\ref{item:thetanu1}.} By \autoref{theorem:dudek-holly}, for all $\xi \in M$ there exists
an open set $U\subseteq \unpp(M)$ containing $\xi$.
Since $U$ is open and $U\subseteq \cE(M)$, 
the set $\big\{r\in(0,\infty)\colon {]}\xi,\xi+rv{[}\subseteq\cE(M)\big\}$ is non-empty and
therefore $\vartheta(\xi,v)>0$ for all $(\xi,v)\in \nu_1(M)$.

\noindent{\em\ref{item:thetanu2}.}  
Clearly  $p(\xi)=\xi$.  
Now let $H:=\big\{r\in(0,\infty)\colon p(\xi+r v)=\xi\big\}$.
The set $H$ is non-empty by \autoref{theorem:dudek-holly}, 
so  $s:=\sup H$ is a positive real number or equal to infinity. Assume $s<\vartheta(\xi,v)$. Then 
$\xi+s v\in {\big]\xi,\xi+\frac{1}{2}(s+\vartheta(\xi,v))v\big[}\subseteq \cE(M)$,  in particular $\xi+s v\in \cE(M)$. Since $p(\xi+rv)=\xi$ for all 
$r\in (0,s)$ and  $p$ is 
continuous on $\cE(M)$, it holds $p(\xi+s v)=\xi$.
By \autoref{lemma:extend-segment} there exists $a\in (1,\infty)$ such 
that 
\[\xi + asv\in \cE(M) \text{ and }p\big(\xi+ asv\big)=\xi\,.\]
Hence, by \autoref{lemma:line-segment-open} also
$]\xi, \xi+ asv[\subseteq\cE(M)$, contradicting $s:=\sup H$.
Therefore, $s\ge \vartheta(\xi,v)$ and $p(\xi+r v)=\xi$ for all $r\in (0,\vartheta(\xi,v))$.


\noindent{\em\ref{item:thetanu4}.} This is obvious.

\noindent{\em\ref{item:thetanu5}.} 
By \autoref{lemma:line-segment-open} we have that for every $x\in \cE(M)$ 
also the line segment $[x,p(x){[}$ is contained in $\cE(M)$. By
\autoref{theorem:dudek-holly} we have $M\subseteq \cE(M)$ so that indeed
$[x,p(x)]\subseteq\cE(M)$.
\end{proof}

\begin{lemma}\label{right_projection_lemma}
Let $M$ be a $\lip$ submanifold.
Then $F^*$ is a homeomorphism and $p(F^*(\xi,v))=\xi$
for all $(\xi,v)\in \nu^*(M)$. 
\end{lemma}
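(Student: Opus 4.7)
The plan is to establish the projection identity first, and then derive the homeomorphism property by exhibiting an explicit continuous inverse.

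For the identity $p(F^*(\xi,v))=\xi$, if $v=0$ the claim is trivial. Otherwise, set $w=v/\|v\|$ and $r=\|v\|$; then $(\xi,w)\in\nu_1(M)$ and, by \autoref{proposition:thetanu}\,\ref{item:thetanu4}, the assumption $(\xi,v)\in\nu^*(M)$ gives $r<\vartheta(\xi,w)$. Part \ref{item:thetanu2} of the same proposition then yields $p(\xi+rw)=\xi$, i.e.\ $p(F^*(\xi,v))=\xi$.

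For the homeomorphism claim, I would check the four standard properties. Continuity of $F^*$ is immediate, since $F^*$ is the restriction of the continuous addition map $(\xi,v)\mapsto \xi+v$. For surjectivity onto $\cE(M)$, pick $x\in\cE(M)$ and set $\xi:=p(x)$, $v:=x-p(x)$. By \autoref{remark:first-order} we have $v\perp T_\xi(M)$, so $(\xi,v)\in\nu(M)$, and by \autoref{proposition:thetanu}\,\ref{item:thetanu5} we have $[\xi,\xi+v]=[p(x),x]\subseteq\cE(M)$; thus $(\xi,v)\in\nu^*(M)$ with $F^*(\xi,v)=x$. Injectivity follows from the projection identity just established: if $F^*(\xi_1,v_1)=F^*(\xi_2,v_2)=x$, then $\xi_1=p(x)=\xi_2$, whence $v_1=v_2$.

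Finally, the above surjectivity argument actually provides an explicit candidate for the inverse, namely $(F^*)^{-1}(x)=(p(x),x-p(x))$. Continuity of this map on $\cE(M)$ follows at once from \autoref{lemma:continuous} applied to the open set $\cE(M)\subseteq \unpp(M)$, together with continuity of $x\mapsto x$. Hence $F^*$ is a continuous bijection with continuous inverse, i.e.\ a homeomorphism. There is no real obstacle here beyond correctly invoking \autoref{proposition:thetanu}\,\ref{item:thetanu4}--\ref{item:thetanu5} so that the endpoint $x$ itself (not only the half-open segment $[p(x),x[$) is covered by the parametrization.
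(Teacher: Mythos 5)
Your proposal is correct and follows essentially the same route as the paper: the projection identity via Proposition~\ref{proposition:thetanu}~(\ref{item:thetanu2}) and (\ref{item:thetanu4}), surjectivity via \autoref{remark:first-order} and Proposition~\ref{proposition:thetanu}~(\ref{item:thetanu5}), injectivity from the projection identity and uniqueness of nearest points, and continuity of the explicit inverse $(F^*)^{-1}(x)=(p(x),x-p(x))$ from \autoref{lemma:continuous}. No gaps.
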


\begin{proof}
 1. $F^*$ is injective: 
Let $(\xi,v),(\zeta,w)\in \nu^*(M)$ with $\xi+v=F^*(\xi,v)=F^*(\zeta,w)=\zeta+w$. 
From item \eqref{item:thetanu2} of \autoref{proposition:thetanu} it follows $p(\xi+v)=\xi$ and $p(\zeta+w)=\zeta$.
By the unpp of $\cE(M)$, it holds $\xi=p(\xi+v)=p(\zeta+w)=\zeta$.
Together with  $\xi+v=\zeta+w$ we also get $v=w$.

 2. $F^*$ is surjective:
since every $x\in \cE(M)$ can by written as $x=p(x)+(x-p(x))$
and $(x-p(x))\perp T_{p(x)}(M)$ (see \autoref{remark:first-order}), 
we have $x=F^*(p(x),x-p(x))$.
By \autoref{proposition:thetanu}.(\ref{item:thetanu5}), 
${[p(x),x]}\subseteq \cE(M)$. 

 3. The function $F^*$ is clearly continuous. Its inverse 
satisfies $(F^*)^{-1}(x)=(p(x),x-p(x))$, and it 
is continuous since $p$ is continuous by \autoref{lemma:continuous}. 
\end{proof}

We recall some well-known concepts.

\begin{definition}\label{def:unit-normal-field}
Let  $M$ be an $m$-dimensional $C^1$ submanifold of
the $\R^d$.
\begin{enumerate}
\item Let $V\subseteq M$ be an open set and let 
$n\colon V\to \R^d$ be a continuous function
such that $\|n(\eta)\|=1$ and 
$n(\eta)\in(T_\eta(M))^\perp$ for every $\eta\in V$. 
Then we call $\big(V,n\big)$  a {\em unit normal field}.
\item Let $V\subseteq M$ be an open set and let 
$n_{m+1},\dots,n_d\colon V\to \R^d$ be continuous functions
such that 
\[\langle n_j(\eta), n_\ell(\eta)\rangle=
\begin{cases}
1& j=\ell\\
0& j\ne \ell
\end{cases}
\] and 
$n(\eta)\in(T_\eta(M))^\perp$ for every $\eta\in V$. 
Then we call $\big(V,n_{m+1},\dots,n_d\big)$  an {\em orthonormal moving 
frame of $\nu(V)$}.
\end{enumerate}
\end{definition}

It is not hard to show -- using the subsequent lemma and induction -- 
that if $M$ is a $C^k$ submanifold with $k\ge 1$,
then for every $\xi\in M$ there exists a $C^{k-1}$ orthonormal moving 
frame $\big(V,n_{m+1},\dots,n_d\big)$ of $\nu(V)$ with $\xi\in V$.

\begin{proposition}\label{lemma:normal-vectors}
Let 
$M$ be an $m$-dimensional $C^k$ submanifold of the $\R^d$, with $k\ge 1$. 
Then for every $(\xi,v)\in \nu_1(M)$ there exists 
a $C^{k-1}$ unit normal field $\big(V,n\big)$ 
with $\xi\in V$ and $n(\xi)=v$.

For $k\ge 2$ it holds that, if $(V_1,n_1)$ is another unit normal field
with $\xi\in V_1$ and $n_1(\xi)=v$,
then\[
P_{T_\xi(M)} Dn(\xi)=P_{T_\xi(M)} Dn_1(\xi)\,,
\]
where $P_{T_\xi(M)}$ is the projection onto the tangent space $T_\xi(M)$.
\end{proposition}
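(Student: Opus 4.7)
My plan is to treat the two assertions separately: for existence I build a unit normal field explicitly via the orthogonal projections onto the tangent spaces, and for the uniqueness of $P_{T_\xi(M)}Dn(\xi)$ I differentiate the identity $P_{T_\eta(M)}\bigl(n(\eta)-n_1(\eta)\bigr)=0$ at $\xi$ and exploit that $n-n_1$ vanishes there.

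For existence, I fix a $C^k$ local parametrization $\psi\colon W\to V$ of $M$ with $\psi(0)=\xi$ (see \autoref{remark:submanifold}.\ref{item:remark_submanifold1}). The columns $\partial_1\psi(y),\dots,\partial_m\psi(y)$ of the $d\times m$ matrix $A(y)$ are $C^{k-1}$ and linearly independent, so the orthogonal projection onto $T_{\psi(y)}(M)$, namely
\[
P(y):=A(y)\bigl(A(y)^\top A(y)\bigr)^{-1}A(y)^\top,
\]
depends $C^{k-1}$ on $y$. Define $\tilde n(\eta):=\bigl(\id-P(\psi^{-1}(\eta))\bigr)v$ on $V$. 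Since $v\perp T_\xi(M)$ with $\|v\|=1$, we have $\tilde n(\xi)=v$, and by continuity $\tilde n$ is nonzero on some smaller neighborhood $V'\subseteq V$ of $\xi$. Setting
\[
n(\eta):=\frac{\tilde n(\eta)}{\|\tilde n(\eta)\|}
\]
yields a $C^{k-1}$ unit normal field on $V'$ with $n(\xi)=v$, as required.

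For the uniqueness claim, let $n$ and $n_1$ be two $C^{k-1}$ unit normal fields agreeing at $\xi$ and set $\Delta:=n-n_1$. View $\eta\mapsto \Pi(\eta):=P_{T_\eta(M)}$ as a $C^{k-1}$ matrix-valued map (constructed as above). Since $n(\eta),n_1(\eta)\in T_\eta(M)^\perp$, the identity $\Pi(\eta)\Delta(\eta)=0$ holds on the common domain. Given $t\in T_\xi(M)$, pick a $C^1$ curve $\gamma\colon(-\varepsilon,\varepsilon)\to M$ with $\gamma(0)=\xi$ and $\gamma'(0)=t$. Because $k\ge 2$, both $\Pi\circ\gamma$ and $\Delta\circ\gamma$ are $C^1$, and differentiating $\Pi(\gamma(s))\Delta(\gamma(s))=0$ at $s=0$ via the product rule gives
\[
D\Pi(\xi)(t)\,\Delta(\xi)+\Pi(\xi)\,D\Delta(\xi)(t)=0.
\]
The first summand vanishes because $\Delta(\xi)=0$, so $P_{T_\xi(M)}D\Delta(\xi)(t)=0$. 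As $t$ was arbitrary, $P_{T_\xi(M)}Dn(\xi)=P_{T_\xi(M)}Dn_1(\xi)$.

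The main point requiring care is the regularity bookkeeping: one must verify that the projection field $\Pi(\eta)$ is genuinely $C^1$, which is exactly where the hypothesis $k\ge 2$ enters. The conceptual leverage comes entirely from the assumption $n(\xi)=n_1(\xi)$, which forces $\Delta(\xi)=0$ and thereby kills the otherwise troublesome term $D\Pi(\xi)(t)\,\Delta(\xi)$; this isolates the tangential part of $Dn(\xi)$, whose uniqueness is consistent with its interpretation, in the hypersurface case, as the shape operator appearing in \autoref{theorem:Dp-formula}.
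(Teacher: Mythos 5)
Your proposal is correct and takes essentially the same route as the paper: existence by projecting $v$ off the moving tangent space along a local parametrization and normalizing (your explicit formula $P(y)=A(y)\bigl(A(y)^\top A(y)\bigr)^{-1}A(y)^\top$ handles all codimensions at once, whereas the paper treats hypersurfaces separately via the cross product and uses a determinant argument in higher codimension), and uniqueness by differentiating an orthogonality identity at $\xi$ and using $(n-n_1)(\xi)=0$ to discard the extra term. The only cosmetic difference in the second part is that you differentiate $P_{T_\eta(M)}\bigl(n(\eta)-n_1(\eta)\bigr)=0$ directly, while the paper differentiates $\langle n-n_1,t\rangle=0$ against arbitrary $C^1$ tangent fields $t$; the mechanism is identical.
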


\begin{proof}
Let $(\xi,v)\in \nu_1(M)$.
Choose a local parametrization $\psi:W\to \R^d$  
of $M$  with $0\in W$ and 
$\psi(0)=\xi$ (see item \ref{item:remark_submanifold1} of \autoref{remark:submanifold}). For every $y\in W$ and every $j\in\{1,\dots,m\}$ define 
$t_j(y):=\frac{\partial}{\partial y_j}\psi(y)$.
Note that for every $y\in W$ the set $\{t_1(y),\dots,t_d(y)\}$ forms a 
basis of the tangent space $T_{\psi(y)}(M)$. 

If $m=d-1$, then the cross product $w:=t_1\times\dots\times t_d$
is normal to $M$ and $w(\xi)=\lambda v$ for some $\lambda\in \R\setminus \{0\}$.
W.l.o.g., $\lambda>0$. Now the vector field $n=\|w\|^{-1}w$ is a
unit normal field on $V=\psi(W)$ with $\xi\in V$ and $n(\xi)=v$.

Now consider the case $m<d-1$. Write $v_{m+1}:=v$ and
extend $v_{m+1}$ to a basis  $v_{m+1},v_{m+2},\dots, v_d$ of 
$\big(T_{\psi(0)}(M)\big)^\perp$. Then 
$$\det\big( t_1(0),\dots,t_m(0),v_{m+1},\dots, v_d\big)\ne 0,$$ and by the 
continuity of the determinant
and the functions $t_1,\dots, t_m$  there exists $c\in (0,\infty)$
and an open set $W_1\subseteq W$ 
containing $0$  such that for all $y\in W_1$ we have
$\big|\det\big(t_1(y),\dots,t_d(y),v_{m+1},\dots, v_d\big)\big|\ge c.$

Denote by $P(y)$ the orthogonal projection from $\R^d$ 
onto the space spanned by $\{t_1(y),\dots,t_m(y)\}$, i.e.,  on 
$T_{\psi(y)}(M)$, and
define $n(\psi(y))$ by
\begin{align*}
n(\psi(y))&:=\|v_{m+1}-P(y)v_{m+1}\|^{-1} (v_{m+1}-P(y)v_{m+1})\,.
\end{align*}

Finally, in both cases, $V:=\psi(W_1)$ is an open subset of $M$ by the invariance of domain theorem and we have $\xi=\psi(0)\in V$. So $(V,n)$ is a
unit normal field with $\xi\in V$ and $n(\xi)=v$.

Let 
$(V_1,n_1)$ be another unit normal field with $\xi\in V_1,n_1(\xi)=v$.
\begin{align*}
P_{T_\xi(M)}D n(\xi)-P_{T_\xi(M)}D n_1(\xi)
&=P_{T_\xi(M)}D (n- n_1)(\xi)\,.
\end{align*}
For every $C^1$ vector field $(V_2,t)$, with $t\colon V_2\subset V\cap V_1
\to \R^d$ 
satisfying $t(\zeta)\in T_\zeta(M)$ for all $\zeta$,  we have 
\begin{align*}
\langle n-n_1,t\rangle&=0\\
t^\top D (n-n_1)+ (n-n_1)^\top D t&=0\,.
\end{align*}
Since $(n-n_1)(\xi)=0$, we have $t^\top D (n-n_1)(\xi)=0$. Because $t$ was arbitrary the result follows.
\end{proof}

\begin{definition}\label{definition:shape}
Let $M$ be a $C^1$ submanifold of $\R^d$ and $(\xi,v)\in \nu_1(M)$.
\begin{enumerate}
\item For an arbitrary $C^1$ unit normal field $(V,n)$ with $\xi\in V$,
$n(\xi)=v$ we define the 
{\em shape operator}
$L_{\xi,v}:T_\xi(V)\to T_\xi(V)$ by
\[
L_{\xi,v}:= -P_{T_\xi(M)}  Dn(\xi)\,,
\]
where $P_{T_\xi(M)}$ denotes the orthogonal projection onto the tangent space.
Note that, $L_{\xi,v}$ is well-defined by \autoref{lemma:normal-vectors}.
\item Denote by $\lambda_1,\dots,\lambda_\ell$
the (not necessarily distinct) positive eigenvalues of  $L_{\xi,v}$.
Then the points $\xi+\lambda_1^{-1}v,\dots,\xi+\lambda_\ell^{-1} v$ are called 
{\em centers of curvature} of $M$ in $\xi$ in direction of $v$.

\item For every $(\xi,v)\in \nu_1(M)$ 
denote by $\varrho(\xi,v)$
the {\em radius of
curvature} of $M$ at $\xi$ in direction of the unit normal $v$,
\[
\varrho(\xi,v):=\inf\{r\in (0,\infty)\colon \forall \tau\in (0,\infty)\colon
B_\tau(\xi)\cap M\cap B_r(\xi+rv)\ne\emptyset\}\,,
\]
with the convention that $\inf \emptyset=\infty$.
\end{enumerate}
\end{definition}

The following fact is most likely folklore, yet it is not easy 
to find a citation for item \eqref{item:radiusiseigen}. Therefore, a proof is provided in the appendix.

\begin{proposition}\label{theorem:curvature}
Let $M\subseteq \R^d$ be a $C^2$ submanifold 
and let $(\xi,v)\in \nu_1(M)$. Then
\begin{enumerate}
\item The shape operator $L_{\xi,v}$ is  self-adjoint.
\item\label{item:radiusiseigen} 
Denote by $\lambda_1,\dots,\lambda_m$
the (not necessarily distinct) eigenvalues of  $L_{\xi,v}$.

Then  
\[
\varrho(\xi,v)=\big(\max(0,\max(\lambda_1,\dots,\lambda_m))\big)^{-1}\,,
\]
with the convention that $0^{-1}=\infty$.
\end{enumerate}
\end{proposition}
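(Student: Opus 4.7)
My plan for Part 1 is to exploit a local $C^2$ parametrization $\psi:W\to \R^d$ with $\psi(0)=\xi$ together with a $C^1$ unit normal field $(V,n)$ satisfying $n(\xi)=v$, whose existence is granted by \autoref{lemma:normal-vectors}. Setting $t_i(y):=\partial_i\psi(y)$ so that $\{t_i(0)\}$ is a basis of $T_\xi(M)$, I would differentiate the identities $\langle n\circ\psi,t_i\rangle\equiv 0$ on $W$ with respect to $y_j$ to obtain
\[
\langle Dn(\xi)\,t_j(0),\,t_i(0)\rangle=-\langle v,\partial_i\partial_j\psi(0)\rangle,
\]
whose right-hand side is symmetric in $(i,j)$ by Schwarz's theorem. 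Since $t_i(0)\in T_\xi(M)$ and $P_{T_\xi(M)}$ is self-adjoint, one has $\langle L_{\xi,v}t_j(0),t_i(0)\rangle=-\langle Dn(\xi)t_j(0),t_i(0)\rangle$, so the matrix of $L_{\xi,v}$ in the basis $\{t_i(0)\}$ is symmetric, which is equivalent to self-adjointness of $L_{\xi,v}$ on $T_\xi(M)$.

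For Part 2 I would use the graph representation from item \ref{item:remark_submanifold2} of \autoref{remark:submanifold}: locally near $\xi$, $M$ is parametrized by $\psi(y):=\xi+y+\Phi(y)$ for a $C^2$ map $\Phi:W\subseteq T_\xi(M)\to T_\xi(M)^\perp$ with $\Phi(0)=0$ and $D\Phi(0)=0$. Using $t\perp v$ and $\Phi(t)\perp t$, the squared distance from the graph point $\xi+t+\Phi(t)$ to $\xi+rv$ equals $r^2+\|t\|^2+\|\Phi(t)\|^2-2r\langle\Phi(t),v\rangle$. Introducing the quadratic form $Q(u):=\langle D^2\Phi(0)(u,u),v\rangle$ on $T_\xi(M)$ and specializing the identity from Part 1 to this $\psi$ (where $\partial_i\partial_j\psi(0)=\partial_i\partial_j\Phi(0)$) yields $Q(u)=\langle L_{\xi,v}u,u\rangle$. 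Taylor expanding $\Phi$ and writing $t=su$ with $\|u\|=1$ then reduces the inclusion $\xi+t+\Phi(t)\in B_r(\xi+rv)$ to
\[
s^2\bigl(1-rQ(u)\bigr)+o(s^2)<0.
\]

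Let $\lambda_{\max}:=\max(\lambda_1,\dots,\lambda_m)$; by self-adjointness $\max_{\|u\|=1}Q(u)=\lambda_{\max}$. If $\lambda_{\max}>0$ and $r>1/\lambda_{\max}$, picking a unit eigenvector for $\lambda_{\max}$ makes $1-rQ(u)<0$, so the displayed inequality holds for all small $s$, producing points of $M$ in $B_r(\xi+rv)$ arbitrarily close to $\xi$ and giving $\varrho(\xi,v)\le 1/\lambda_{\max}$. Conversely, whenever $r\lambda_{\max}<1$ (which means $r<1/\lambda_{\max}$ if $\lambda_{\max}>0$ and is automatic if $\lambda_{\max}\le 0$), one has $1-rQ(u)\ge 1-r\lambda_{\max}>0$ uniformly in unit $u$; after shrinking the neighborhood so that $B_\tau(\xi)\cap M$ is contained in the graph, the displayed inequality fails uniformly for small $s$, which excludes $r$ from the defining set and yields $\varrho(\xi,v)\ge 1/\lambda_{\max}$ (respectively $\varrho(\xi,v)=\infty$ when $\lambda_{\max}\le 0$). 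The main technical obstacle is the uniformity of the Taylor remainder in $u$, which is secured by the $C^2$-regularity of $\Phi$ together with compactness of the unit sphere in $T_\xi(M)$; the conventions $\inf\emptyset=\infty$ and $0^{-1}=\infty$ then match the stated formula in all cases.
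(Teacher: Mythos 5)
Your proposal is correct and takes essentially the same route as the paper's appendix proof: for Part 1 both differentiate the orthogonality relations between a $C^1$ unit normal field and the coordinate tangent fields and invoke Schwarz's theorem (what you actually establish is symmetry of the bilinear form $\langle L_{\xi,v}t_j(0),t_i(0)\rangle$, which is exactly what self-adjointness requires, so the slightly loose phrase ``matrix of $L_{\xi,v}$ in the basis'' does no harm), and for Part 2 both work in the graph chart $t\mapsto\xi+t+\Phi(t)$ over $T_\xi(M)$ and identify $\langle D^2\Phi(0)(u,u),v\rangle$ with $\langle L_{\xi,v}u,u\rangle$. The only organizational difference is that you decide membership of nearby graph points in $B_r(\xi+rv)$ by expanding the squared distance, reducing everything to the single threshold inequality $s^2(1-rQ(u))+o(s^2)<0$ treated uniformly in $u$, whereas the paper writes the relevant spherical cap as a graph over $v^\perp$ and compares heights in a case analysis on the sign of the largest eigenvalue; both arguments pin $\varrho(\xi,v)$ at $1/\lambda_{\max}$ when $\lambda_{\max}>0$ and give $\varrho(\xi,v)=\infty$ otherwise, in accordance with the stated conventions.
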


The next proposition 
characterizes the critical values of the endpoint map. It
is the second example in Section 1.3 in 
\cite{arnoldsymplectic} and also follows from \cite[4.1.9 Corollary]{PalaisTang}.

\begin{proposition}\label{lemma:arnold}
Let $M\subseteq \R^d$ be a $C^2$ submanifold and let $(\xi,v)\in \nu(M)$.
Then $\big(\det F'\big)(\xi,v)=0$ 
iff $\xi+v$ is a center of curvature in $\xi$ in direction of $\|v\|^{-1}v$.
\end{proposition}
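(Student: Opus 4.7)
The plan is to choose a convenient local coordinate system on $\nu(M)$ near $(\xi,v)$, write the Jacobian matrix of $F$ in a block-triangular form with respect to the splitting $\R^d = T_\xi(M)\oplus T_\xi(M)^\perp$, and then read off the vanishing condition for its determinant. The case $v=0$ can be disposed of separately (one obtains $DF = \id$ in the appropriate coordinates, and the right-hand side of the equivalence is vacuous), so I may assume $v\neq 0$ and set $u:=\|v\|^{-1}v$.

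First I would pick a local parametrization $\psi:W\to M$ of $M$ with $\psi(0)=\xi$ and put $t_i(y):=\partial_{y_i}\psi(y)$. By \autoref{lemma:normal-vectors}, I can extend $u$ to a $C^1$ orthonormal moving frame $(V,n_{m+1},\ldots,n_d)$ of $\nu(V)$ with $n_{m+1}(\xi)=u$. Shrinking $W$ so that $\psi(W)\subseteq V$, this yields the chart
\[
(y,a_{m+1},\ldots,a_d)\longmapsto \Big(\psi(y),\,\sum_{j=m+1}^{d}a_j n_j(\psi(y))\Big)
\]
on $\nu(M)$, in which $(\xi,v)$ has coordinates $(0,\|v\|,0,\ldots,0)$. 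In these coordinates
\[
F(y,a)=\psi(y)+\sum_{j=m+1}^{d}a_j n_j(\psi(y)),
\]
so at $(\xi,v)$ one has $\partial_{a_j}F=n_j(\xi)$ and $\partial_{y_i}F=t_i(0)+\|v\|\,Dn_{m+1}(\xi)\,t_i(0)$.

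Now I would split each column into its tangent and normal parts. By \autoref{definition:shape}, $P_{T_\xi(M)}Dn_{m+1}(\xi)=-L_{\xi,u}$, so the tangent component of $\partial_{y_i}F$ equals $(\id-\|v\|L_{\xi,u})t_i(0)$, while the normal component is some vector in $T_\xi(M)^\perp$. Writing $DF$ in the bases $\{\partial_{y_1},\ldots,\partial_{a_d}\}$ on the source and $\{t_1(0),\ldots,t_m(0),n_{m+1}(\xi),\ldots,n_d(\xi)\}$ on the target, and then performing column operations (subtracting suitable multiples of the $a_j$-columns, which are purely normal, from the $y_i$-columns to kill their normal parts) yields the block-triangular matrix
\[
\begin{pmatrix} A & 0 \\ \ast & I_{d-m}\end{pmatrix},
\]
where $A$ is the matrix of $\id-\|v\|L_{\xi,u}$ on $T_\xi(M)$ in the basis $t_1(0),\ldots,t_m(0)$. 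Since the chosen bases are invertible, $\det F'$ vanishes iff $\det A=0$, i.e.\ iff $\|v\|^{-1}$ is an eigenvalue of $L_{\xi,u}$. As $\|v\|^{-1}>0$, this is precisely the condition that $\xi+v=\xi+\|v\|u$ is of the form $\xi+\lambda^{-1}u$ for some positive eigenvalue $\lambda$ of $L_{\xi,u}$, i.e.\ a center of curvature in direction $u$.

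The only delicate step is verifying the block-triangular decomposition: one must check carefully that the chart on $\nu(M)$ really makes the $a_j$-columns purely normal and that the identification of the tangent part of $\partial_{y_i}F$ with $(\id-\|v\|L_{\xi,u})t_i(0)$ uses exactly the shape-operator formula (which in turn relies on the frame-independence of $P_{T_\xi(M)}Dn(\xi)$ established in \autoref{lemma:normal-vectors}). Once this bookkeeping is in place, the equivalence is immediate.
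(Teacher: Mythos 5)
Your argument is correct, but it is worth noting that the paper does not prove this proposition at all: it is quoted from the literature (the second example in Section 1.3 of Arnol'd's symplectic geometry survey, and \cite[4.1.9 Corollary]{PalaisTang}). Your chart computation therefore supplies a self-contained proof that fits the paper's own framework: the chart $(y,a)\mapsto\big(\psi(y),\sum_j a_j n_j(\psi(y))\big)$ is a legitimate $C^1$ parametrization of $\nu(M)$ (the frame exists and is $C^1$ by \autoref{lemma:normal-vectors}, since $M$ is $C^2$), the $a_j$-columns of $DF$ at $(\xi,v)$ are indeed purely normal, the tangential part of the $y_i$-columns is $(\id_{T_\xi(M)}-\|v\|L_{\xi,u})t_i(0)$ by the definition of the shape operator in \autoref{definition:shape} together with its frame-independence, and column operations leave the determinant unchanged, so the vanishing of $\det F'$ (which is chart-independent, being vanishing of a determinant up to nonzero factors) is equivalent to $\|v\|^{-1}$ being a (necessarily positive) eigenvalue of $L_{\xi,u}$, i.e.\ to $\xi+v$ being a center of curvature; the degenerate case $v=0$ is handled correctly since there the columns $t_i(0),n_j(\xi)$ form a basis of $\R^d$. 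A pleasant by-product of your route, compared with simply citing the references, is that it exhibits $\det\big(\id_{T_\xi(M)}-\|v\|L_{\xi,u}\big)$ as the tangential block of $DF$, which is exactly the operator that reappears (inverted) in the derivative formula of \autoref{theorem:Dp-formula}, so the criticality statement and the differentiation formula are seen to come from one and the same computation.
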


A similar observation as in the subsequent lemma can be found in 
\cite[Example 9]{chazal2017}.

\begin{lemma}
\label{lemma:curvature} 
Let $M\subseteq \R^d$ be a $\lip$ submanifold. Then
\[\vartheta(\xi,v)\le\varrho(\xi,v)\]
for every $(\xi,v)\in \nu_1(M)$.
\end{lemma}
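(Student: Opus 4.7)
The plan is to argue by contradiction. Suppose $\vartheta(\xi,v)>\varrho(\xi,v)$ and fix any $r$ with $\varrho(\xi,v)<r<\vartheta(\xi,v)$. The aim is to derive two incompatible statements about $d(\xi+rv,M)$: one lower bound coming from $r<\vartheta(\xi,v)$, and one strict upper bound coming from $r>\varrho(\xi,v)$.

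First I would exploit $r<\vartheta(\xi,v)$. By the definition of $\vartheta$ as a supremum, there exists $r'>r$ with $]\xi,\xi+r'v[\subseteq\cE(M)$, so in particular $\xi+rv\in\cE(M)$. Applying \autoref{proposition:thetanu}(\ref{item:thetanu2}) to $\xi+rv$ gives $p(\xi+rv)=\xi$, which means $\xi$ is the \emph{unique} nearest point to $\xi+rv$ on $M$, and therefore $d(\xi+rv,M)=r$. In particular, $M\cap B_r(\xi+rv)=\emptyset$.

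Next I would exploit $r>\varrho(\xi,v)$. Here the key observation is that the set
\[
A:=\Big\{s\in(0,\infty)\colon\forall\tau\in(0,\infty)\colon B_\tau(\xi)\cap M\cap B_s(\xi+sv)\ne\emptyset\Big\}
\]
of which $\varrho(\xi,v)$ is the infimum is upward-closed. This is a one-line triangle-inequality check: for $0<s\le r$ and any $z\in B_s(\xi+sv)$,
\[
\|z-(\xi+rv)\|\le\|z-(\xi+sv)\|+(r-s)<s+(r-s)=r,
\]
so $B_s(\xi+sv)\subseteq B_r(\xi+rv)$, and thus $s\in A$ forces $r\in A$. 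Since $r>\inf A$, we may pick some $s\in A$ with $s<r$, and upward-closedness then yields $r\in A$. Choosing any $\tau>0$ now produces $\eta\in M\cap B_\tau(\xi)\cap B_r(\xi+rv)$, whence $d(\xi+rv,M)\le\|\eta-(\xi+rv)\|<r$, contradicting the conclusion of the previous paragraph.

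There is no real obstacle in this argument: the only substantive ingredient beyond the definitions is \autoref{proposition:thetanu}(\ref{item:thetanu2}), which is already available, together with the elementary inclusion $B_s(\xi+sv)\subseteq B_r(\xi+rv)$ that upgrades ``$r>\varrho(\xi,v)$'' from an infimum statement to an actual membership statement in $A$. The argument also handles the degenerate cases automatically: if $\varrho(\xi,v)=\infty$, the desired inequality is trivial, and if $\vartheta(\xi,v)=\infty$ the same contradiction applies for every finite $r>\varrho(\xi,v)$.
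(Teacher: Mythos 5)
Your proof is correct and follows essentially the same route as the paper: pick $r$ strictly between $\varrho(\xi,v)$ and $\vartheta(\xi,v)$, use $p(\xi+rv)=\xi$ (the paper cites \autoref{right_projection_lemma}, you cite \autoref{proposition:thetanu}, which is the same underlying fact) to get $d(\xi+rv,M)=r$, and use $r>\varrho(\xi,v)$ to get $d(\xi+rv,M)<r$. The only difference is that you make explicit, via the inclusion $B_s(\xi+sv)\subseteq B_r(\xi+rv)$, the upward-closedness step that the paper treats as immediate.
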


\begin{proof} 
Let $(\xi,v)\in \nu_1(M)$. In the case $\varrho(\xi,v)=\infty$ 
there is nothing to show. In the  case $\varrho(\xi,v)<\infty$  assume 
instead $\vartheta(\xi,v)>\varrho(\xi,v)$.
Choose 
$r_1\in \left(\varrho(\xi,v) ,\vartheta(\xi,v)\right)$. 
Then $M\cap B_{r_1}(\xi+r_1v)\cap B_\tau(\xi)\ne\emptyset$ 
for all $\tau\in (0,\infty)$.
In particular, $M\cap B_{r_1}(\xi+r_1v)\ne\emptyset$which implies $d(\xi+r_1v,M)< r_1$.
On the other hand, by \autoref{right_projection_lemma} we have
$\xi+r_1 v\in \cE(M)$ and $p(\xi+r_1 v)=\xi$, and in particular,
$d(\xi+r_1v,M)=r_1$. This is a contradiction.
\end{proof}

\begin{lemma}\label{lemma:diffeomorphism}
Let $M$ be a $C^2$ submanifold of $\R^d$.
Then the map $F^*\colon \nu^*(M)\to \cE(M)$ is a diffeomorphism.
\end{lemma}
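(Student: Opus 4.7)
The plan is to apply the inverse function theorem. Combining \autoref{right_projection_lemma} (which gives that $F^*$ is a bijective homeomorphism onto $\cE(M)$) with \autoref{remark:F-Ck-1} (the endpoint map $F$ is $C^1$ when $M$ is $C^2$, hence so is $F^*$), it suffices to show first that $\nu^*(M)$ is open in $\nu(M)$, and second that $DF(\xi,v)$ is invertible at every $(\xi,v)\in \nu^*(M)$. Openness is a short compactness argument: if $[\xi_0,\xi_0+v_0]\subseteq \cE(M)$, then by compactness of the segment and openness of $\cE(M)$ there exists $\rho>0$ such that a $\rho$-tube around the segment lies inside $\cE(M)$, and any $(\xi,v)\in \nu(M)$ sufficiently close to $(\xi_0,v_0)$ still satisfies $[\xi,\xi+v]\subseteq$ that tube. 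Together with bijectivity, local diffeomorphism then upgrades to a global diffeomorphism.

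To establish invertibility of $DF(\xi,v)$, treat the two cases separately. For $v=0$: evaluating along curves $t\mapsto (\gamma(t),\eta(t))\in \nu(M)$ with $\gamma(0)=\xi$, $\eta(0)=0$ shows $DF(\xi,0)$ is the canonical identification $T_\xi(M)\oplus T_\xi(M)^\perp=\R^d$, which is trivially an isomorphism. For $v\ne 0$, set $r:=\|v\|$ and $v_1:=r^{-1}v\in \nu_1(M)$. By \autoref{lemma:arnold} it suffices to show that $\xi+v$ is not a center of curvature of $M$ at $\xi$ in direction $v_1$. Chaining the (still-to-be-proved) strict inequality $r<\vartheta(\xi,v_1)$ with \autoref{lemma:curvature} yields $r<\varrho(\xi,v_1)$; \autoref{theorem:curvature}.(\ref{item:radiusiseigen}) then forces $r\lambda<1$ for every positive eigenvalue $\lambda$ of $L_{\xi,v_1}$, so $1/r$ is not such an eigenvalue and $\xi+v=\xi+rv_1$ cannot be a center of curvature.

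It remains to obtain $r<\vartheta(\xi,v_1)$. Since $(\xi,v)\in \nu^*(M)$ gives $\xi+v\in \cE(M)$ and $p(\xi+v)=\xi$ by \autoref{right_projection_lemma}, \autoref{lemma:extend-segment} provides $a\in(1,\infty)$ with $\xi+av\in \cE(M)$ and $p(\xi+av)=\xi$; then \autoref{lemma:line-segment-open} together with \autoref{proposition:thetanu}.(\ref{item:thetanu5}) puts the open segment ${]}\xi,\xi+av{[}$ inside $\cE(M)$, yielding $\vartheta(\xi,v_1)\ge ar>r$. I expect the upgrade from the weak inequality $r\le \vartheta(\xi,v_1)$, which is immediate from the definition of $\nu^*(M)$, to the strict one to be the only step that is not pure bookkeeping: it is precisely what rules out the pathological possibility that the endpoint $\xi+v$ coincides with a center of curvature, and it relies crucially on the openness of $\cE(M)$ encoded in \autoref{lemma:extend-segment}.
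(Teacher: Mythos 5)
Your proposal is correct and takes essentially the same route as the paper: the homeomorphism from \autoref{right_projection_lemma}, differentiability of the endpoint map (\autoref{remark:F-Ck-1}), and invertibility of $DF$ on $\nu^*(M)$ via \autoref{lemma:arnold} combined with \autoref{lemma:curvature} and \autoref{theorem:curvature}. The only difference is that you spell out three points the paper leaves implicit --- openness of $\nu^*(M)$ in $\nu(M)$, the case $v=0$ (where \autoref{lemma:arnold} does not literally apply), and the strict inequality $\|v\|<\vartheta\big(\xi,\|v\|^{-1}v\big)$, which the paper simply reads off from \autoref{proposition:thetanu}.(\ref{item:thetanu4}) rather than re-deriving it via \autoref{lemma:extend-segment}.
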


\begin{proof}
We have already established in \autoref{right_projection_lemma} that
$F^*$ is a homeomorphism. Furthermore,  $F^*$ is
differentiable (see \autoref{remark:F-Ck-1}).
Thus, it remains to show that the Jacobian of $F^*$ has full rank in every
point $(\xi,v)\in \nu^*(M)$. By \autoref{lemma:arnold} this could only
fail if $F(\xi,v)$ was a center of curvature of $M$.
By \autoref{lemma:curvature} and \autoref{theorem:curvature} however, no center of curvature is contained in 
$\cE(M)$.
\end{proof}

\setcounter{maintheorem}{0}
\begin{maintheorem} \label{theorem:theta-continuous}
\varthetacontinuous
\end{maintheorem}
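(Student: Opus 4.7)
My plan is to deduce (1) from the openness of $\cE(M)$ together with \autoref{proposition:thetanu}.(\ref{item:thetanu4}), and to obtain (2) by combining (1) with an upper semi-continuity argument that splits according to whether $\vartheta(\xi_0,v_0)$ equals the radius of curvature or is strictly smaller.

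For (1), I first show that $\nu^*(M)$ is open in $\nu(M)$. If $(\xi_0,v_0)\in \nu^*(M)$, then the compact segment $[\xi_0,\xi_0+v_0]$ sits in the open set $\cE(M)$, so some $\varepsilon$-neighborhood of this segment lies in $\cE(M)$; for $(\xi,v)\in\nu(M)$ close enough to $(\xi_0,v_0)$, the segment $[\xi,\xi+v]$ stays inside this $\varepsilon$-neighborhood, hence $(\xi,v)\in\nu^*(M)$. To transfer the statement to $\vartheta$, I fix $(\xi_0,v_0)\in\nu_1(M)$ and any $r_0<\vartheta(\xi_0,v_0)$; by \autoref{proposition:thetanu}.(\ref{item:thetanu4}), $(\xi_0,r_0v_0)\in\nu^*(M)$, and the openness just established gives $(\xi,r_0 v)\in\nu^*(M)$ for all $(\xi,v)\in\nu_1(M)$ near $(\xi_0,v_0)$, i.e.\ $\vartheta(\xi,v)\ge r_0$. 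This is lower semi-continuity.

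For (2), granted (1), it suffices to prove $\limsup_{n\to\infty}\vartheta(\xi_n,v_n)\le R:=\vartheta(\xi_0,v_0)$ along every sequence $(\xi_n,v_n)\to(\xi_0,v_0)$, and I may assume $R<\infty$. Arguing by contradiction, suppose $\vartheta(\xi_n,v_n)\to s>R$; by \autoref{lemma:curvature}, $R\le\varrho(\xi_0,v_0)$, and I split into two cases. If $R=\varrho(\xi_0,v_0)$, the $C^2$ hypothesis and \autoref{lemma:normal-vectors} make the shape operator $L_{\xi,v}$ continuous in $(\xi,v)\in\nu_1(M)$; since $\varrho(\xi_0,v_0)<\infty$ forces the largest eigenvalue of $L_{\xi_0,v_0}$ to be strictly positive, \autoref{theorem:curvature}.(\ref{item:radiusiseigen}) makes $\varrho$ continuous at $(\xi_0,v_0)$, and combining with \autoref{lemma:curvature} yields $s=\lim\vartheta(\xi_n,v_n)\le\lim\varrho(\xi_n,v_n)=R$, contradiction.

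The remaining case $R<\varrho(\xi_0,v_0)$ is the main obstacle. I pick $r\in(R,\min(s,\varrho(\xi_0,v_0)))$ and set $x_n:=\xi_n+rv_n$, $x_0:=\xi_0+rv_0$; for large $n$, $x_n\in\cE(M)$ with $p(x_n)=\xi_n$, so $d(x_0,M)=r=\|x_0-\xi_0\|$, while $r>R$ forces $x_0\notin\cE(M)$. The delicate step is to produce a second nearest point $\eta_0\neq\xi_0$ of $x_0$ in $M$: otherwise $x_0\in\unpp(M)\setminus\cE(M)$, and any sequence $y_k\to x_0$ with $y_k\notin\unpp(M)$ would, by uniqueness of the nearest point at $x_0$, have both its distinct nearest candidates collapse to $\xi_0$, contradicting the local injectivity of $F$ at $(\xi_0,rv_0)$ provided by \autoref{lemma:arnold} (available since $r<\varrho(\xi_0,v_0)$ means $\xi_0+rv_0$ is not a center of curvature). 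With $\eta_0$ secured, the elementary identity $\|\xi_0+tv_0-\eta_0\|^2=t^2+\|\xi_0-\eta_0\|^2(1-t/r)$ (a consequence of $\|x_0-\eta_0\|=r$) is strictly less than $t^2$ for every $t>r$; fixing $t\in(r,s)$ and taking $n$ large, continuity gives $\|\xi_n+tv_n-\eta_0\|<t=\|\xi_n+tv_n-\xi_n\|$, contradicting $p(\xi_n+tv_n)=\xi_n$ (which holds since $\vartheta(\xi_n,v_n)>t$ eventually).
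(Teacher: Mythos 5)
Your part (1) is correct and in fact cleaner than the paper's argument: you obtain lower semi-continuity directly from the openness of $\nu^*(M)$ in $\nu(M)$ (compactness of the segment inside the open set $\cE(M)$) together with \autoref{proposition:thetanu}, whereas the paper runs a more hands-on $\varepsilon$--$\delta$ argument through the continuity of $p$ and of the endpoint map. Your part (2) uses the paper's key mechanism --- non-injectivity of the endpoint map near the critical configuration played against \autoref{lemma:arnold} --- but reorganizes it: by working at a radius $r$ strictly between $\vartheta(\xi_0,v_0)$ and $\varrho(\xi_0,v_0)$ you get non-singularity of $DF$ for free and need continuity of $\varrho$ only in the boundary case $R=\varrho(\xi_0,v_0)$, and your explicit computation with a second nearest point $\eta_0$ is a nice elementary substitute for part of the paper's closing argument.

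There is, however, a genuine gap in the sub-case where $x_0$ has a unique nearest point on $M$. You assert that every $y_k\to x_0$ with $y_k\notin\unpp(M)$ has ``distinct nearest candidates'' collapsing to $\xi_0$. Two things are missing. First, a point outside $\unpp(M)$ may have \emph{no} nearest point on $M$ at all --- $M$ is a submanifold but not assumed closed --- so the existence of two distinct nearest points on $M$ for the $y_k$ is precisely what must be established, not assumed; without it no contradiction with local injectivity of $F$ arises. Second, even when nearest points $\zeta_k$ of $y_k$ exist, their accumulation points are a priori only nearest points of $x_0$ in $\overline M$, and uniqueness of the nearest point of $x_0$ \emph{in $M$} does not exclude an accumulation point lying in $\overline M\setminus M$; so the collapse to $\xi_0$ does not follow from your stated hypothesis. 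Both issues are repairable with the paper's device: since $d(\xi_n+tv_n,M)=t$ for fixed $t\in(r,s)$ and large $n$, passing to the limit gives $d(\xi_0+tv_0,\overline M)=t=\|\xi_0+tv_0-\xi_0\|$, and \autoref{lemma:line-segment} applied to $\overline M$ shows that $\xi_0$ is the \emph{unique} nearest point of $x_0$ in $\overline M$; then boundedness of the nearest points of $y_k$ in $\overline M$ forces them to converge to $\xi_0$, and local closedness of the submanifold ($\bar B_\varepsilon(\xi_0)\cap M=\bar B_\varepsilon(\xi_0)\cap\overline M$ for small $\varepsilon$) places them in $M$ for large $k$, so that $y_k\notin\unpp(M)$ indeed yields at least two such points. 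With this inserted, your proof is complete.
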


\begin{proof}
1. $\vartheta$ is lower semi-continuous:
Let $(\xi,v)\in \nu_1(M)$.
We first consider the case where $\vartheta(\xi,v)<\infty$.
Let $\varepsilon\in (0,\infty)$ and  $r\in(\vartheta(\xi,v)-\varepsilon,\vartheta(\xi,v)) $ such that
${\big]\xi,\xi+rv\big[}\subseteq\cE(M)$ and, since $\cE(M)$ is open, there  
exists 
$\rho\in(0,r-\vartheta(\xi,v)+\varepsilon)$  with 
$B_\rho(\xi+rv)=\{z\in \R^d\colon \|z-(\xi+rv)\|<\rho\}\subseteq \cE(M)$.

Since $p$ is continuous and $p(\xi+rv)=\xi$ by \autoref{right_projection_lemma}, 
we can choose $\delta\in(0,\rho)$ such that 
\[
\forall z\in \cE(M)\colon
\|\xi+r v -z\|<\delta\Rightarrow 
\Big\|\Big(p(z)+r\tfrac{z-p(z)}{\left\|z-p(z)\right\|}\Big)-(\xi+rv)\Big\|<\rho\,, 
\]
and in particular $\forall z\in \cE(M)\colon
\|\xi+r v -z\|<\delta\Rightarrow
p(z)+r\tfrac{z-p(z)}{\left\|z-p(z)\right\|}\in \cE(M)$.
By the continuity of the endpoint map there exist $\delta_1,\delta_2\in (0,\infty)$
such that for all $(\zeta,w)\in \nu_1(M)$ it holds
\[
\|\zeta-\xi\|<\delta_1 \text{ and } \|w-v\|<\delta_2 \Rightarrow \|\xi+r v -(\zeta+r w)\|<\delta
\]
and
therefore \[
\|\zeta-\xi\|<\delta_1 \text{ and } \|w-v\|<\delta_2 \Rightarrow 
\vartheta(\zeta,w)\ge r>\vartheta(\xi,v)-\varepsilon\,.
\]  
This shows that $\vartheta$ is
lower semi-continuous in $(\xi,v)$.

The proof for the case $\vartheta(\xi,v)=\infty$ is similar and is left to 
the reader.
\smallskip

\noindent 2. $\vartheta$ is 
upper semi-continuous if $M$ is $C^2$.
Assume the opposite. Then there exist $(\xi,v) \in \nu_1(M)$,
$\alpha\in (0,\infty)$,  and a sequence $(\xi_k,v_k)_{k\in \N}$ 
in $\nu_1(M)$ converging to  $(\xi,v)$ with  $\vartheta(\xi_k,v_k)\geq \alpha$ for every $k\in \N$ but
$\vartheta(\xi,v)<\alpha$.
Choose a sequence
$(r_k)_{k\in \N}$ in $[\vartheta(\xi,v),\alpha)$ with $\lim_k r_k=\alpha$. 
From  \autoref{right_projection_lemma} it follows that 
$\xi_k+r_k v_k\in
\cE(M)$ and $p(\xi_k+r_k v_k)=\xi_k$ for every $k\in \N$. 
In other words, for every $k\in \N$ we have
$B_r\left(\xi_k+r_k v_k\right)\cap M=\emptyset$ and 
$\bar{B}_r\left(\xi_k+r_k v_k\right)\cap M=\{\xi_k\}$. 
Thus, we obtain
$B_\alpha\left(\xi+\alpha v\right)\cap
M=\emptyset$ and $\bar B_\alpha\left(\xi+\alpha v\right)\cap
M\supseteq\{\xi\}$. This together with \autoref{lemma:line-segment} 
implies that for all $r\in  [\vartheta(\xi,v),\alpha)$, 
we have that $\xi$ is the unique nearest point to 
$\xi+r v$ in $\overline{M}$. In particular, for 
$z:=\xi+\vartheta(\xi,v)v$ we have  $\{z\}\in \unpp(\overline{M})$.

Since $z\notin \cE(M)$ 
there is a sequence $(u_k)_{k\in \N}$ converging to $z$ such that 
for every $k\in \N$ we have either that $u_k$ has no nearest point on
$M$ or has at least 2 nearest points on $M$.
Consider now any sequence $(\zeta_k)_{k\in \N}$ with 
$d(u_k,\overline M)=\|u_k-\zeta_k\|$.
It is easy to see that $(\zeta_k)$ is bounded, and therefore has a 
convergent subsequence $(\zeta_{k_j})_{j\in \N}$.
But then $\|z-\lim_{j}\zeta_{k_j}\|=\lim_{j}\|u_{k_j}-\zeta_{k_j}\|\le \lim_{j}\|u_{k_j}-\xi\|=\|z-\xi\|$, such that 
$\|z-\lim_{j}\zeta_{k_j}\|=\|z-\xi\|$ and therefore  $\lim_{j}\zeta_{k_j}=\xi$.

Denote now $P_k:=\big\{\zeta\in \overline M\colon 
\|u_k-\zeta\|=d(u_k,\overline M)\big\}$. Then it holds
\begin{equation}\label{eq:hpuniform}
\forall \varepsilon>0\, \exists N_\varepsilon: \forall k\geq
N_\varepsilon:\sup_{\zeta\in P_k}\|\zeta-\xi\|\leq \varepsilon\,,
\end{equation}
since otherwise one could find a sequence $(\zeta_k)_{k\in\N}$ with 
$\zeta_k\in P_k$ having an accumulation point different from $\xi$, which 
we found to be impossible in the preceding paragraph.

It is readily checked that, since $M$ is a submanifold, 
there exists $\varepsilon\in (0,\infty)$ such that 
$\bar B_\varepsilon(\xi)\cap M=\bar B_\varepsilon(\xi)\cap \overline M$.
From Formula \eqref{eq:hpuniform} it now follows 
that there exists $N_\varepsilon$ such that for all 
$k\ge N_\varepsilon$ we have $P_k\subseteq M$. In addition, it follows
that for every $k\ge N_\varepsilon$ the point $u_k$ has at least 2 nearest
points on $M$.

We have so far succeeded, under the assumption that $\vartheta$ is not
upper semi-continuous, to show existence of $z\notin \cE(M)$ and 
of a sequence
$(u_k)_{k\in \N}$, such that for every $k\in \N$ there exist  
$\zeta_k,\eta_k\in M$ with $\zeta_k\ne \eta_k$ and 
$\|u_k-\zeta_k\|=d(u_k,M)=\|u_k-\eta_k\|$.

This means that the endpoint map $F$ is not injective on any open neighborhood of $(\xi,\vartheta(\xi,v)v)$
in $\nu(M)$. 
It follows from the inverse function theorem that the derivative of $F$ is singular at
$(\xi,\vartheta(\xi,v)v)$. 
By \autoref{lemma:arnold}, $F(\xi,\vartheta(\xi,v)v)$ is a center of 
curvature in $\xi$ in direction of $v$.
By \autoref{theorem:curvature} and \autoref{lemma:curvature} we get $\vartheta(\xi,v)=\varrho(\xi,v)$.
Since $M$ is $C^2$, the function 
$\varrho\colon \nu_1(M)\to (0,\infty]$ is continuous. We have already
shown the existence of
a sequence $(\xi_k,v_k)_{k\in \N}$ 
in $\nu_1(M)$  
converging to some pair $(\xi,v) \in \nu_1(M)$ such that 
$\vartheta(\xi_k,v_k)\geq \alpha$ for every $k\in \N$ but
$\vartheta(\xi,v)<\alpha$. From \autoref{lemma:curvature} it follows
that $\varrho(\xi,v)=\lim_{k}\varrho(\xi_k,v_k)\ge 
\lim_{k}\vartheta(\xi_k,v_k)\ge\alpha > \vartheta(\xi,v)$.

This is the desired contradiction.
\end{proof}

\begin{remark}
The proof of the first assertion of the preceeding theorem may at first sight
seem more complicated than necessary. The technicalities arise since, for example,
one cannot simply conclude $\vartheta(\zeta,w)\ge \beta$ from $B_\beta(\zeta+\beta w)\cap M=\emptyset$.
\end{remark}

\begin{example} \label{ex:theta-discont}
We construct an example of a 1-dimensional submanifold $M$ of
the $\R^2$ which is $\lip$ but for which $\vartheta$ is not continuous.
$M$ is defined as the graph of a function $f:\R\to\R$ with
$f(x)=\int_0^x g(y)dy$ and $g:\R\to\R$ is defined as
\[
g(x):=\left\{\begin{array}{lll}
0& x\le 0 \text{ or }x>1\\
x-1&2/3<x\le 1\\
3^{-(2k+1)}-x&2\cdot 3^{-2(k+1)}<x\le 2\cdot 3^{-(2k+1)}& (\forall k\in \N\cup\{0\})\\
x-3^{-2k}&2\cdot 3^{-(2k+1)}<x\le 2\cdot 3^{-2k}& (\forall k\in \N\cup\{0\}).
\end{array}\right.
\]
The function $g$ is clearly Lipschitz with Lipschitz constant equal to 1 and
$T_{(x,f(x))}=\{a (1,g(x))\colon a\in \R\}$, so $M$ is $\lip$. It is readily checked that $|g(x)|\le \frac{|x|}{2}$, so
$f(x)\leq\frac{x^2}{4}$. Since 
$f(x)=0$ for $x<0$ we have $\vartheta((x,0),(0,1))\ge 2$ for all $x<0$. 

On the intervals $[2\cdot 3^{-(2k+1)}, 2\cdot 3^{-2k}]$ we have
$f(x)=(x-3^{-2k})^2+f(3^{-2k})$, such that using also \autoref{lemma:curvature}
\[
\vartheta\big((3^{-2k},f(3^{-2k}),(0,1)\big)\le\varrho\big((3^{-2k},f(3^{-2k}),(0,1)\big)=\frac{1}{2}\,.
\]
So we have found a sequence $(\xi_k,v_k)\in \nu_1(M)$ with 
$\lim_{k\to\infty}(\xi_k,v_k)=((0,0),(0,1))$ and
$\vartheta(\xi_k,v_k) \le \frac{1}{2}<2\leq\lim_{x\nearrow 0}\vartheta\big((x,0),(0,1)\big)$. \\

For illustration we plot the graphs of $f'$ and $f$:\\

\includegraphics[scale=0.2]{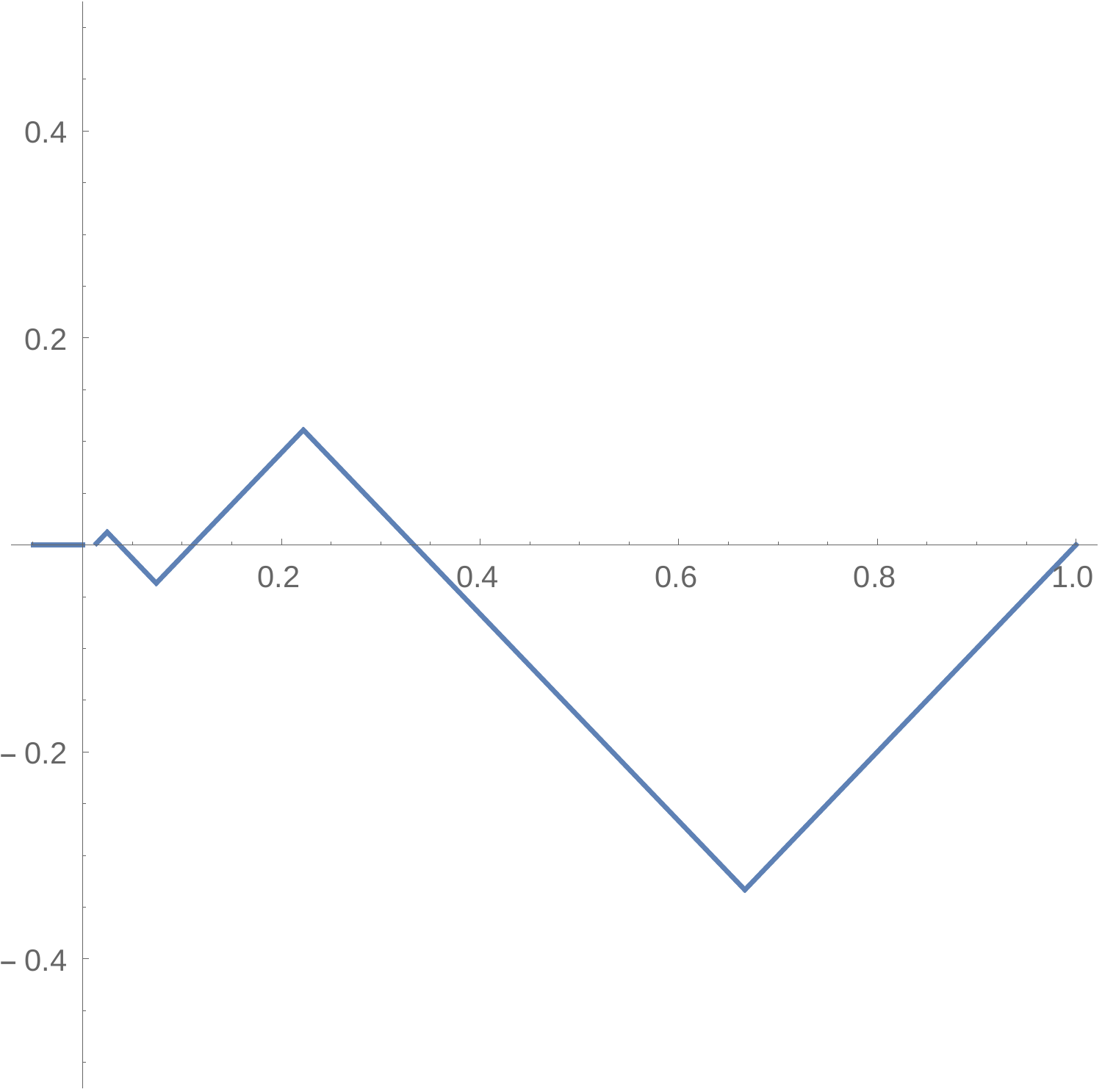}
\includegraphics[scale=0.2]{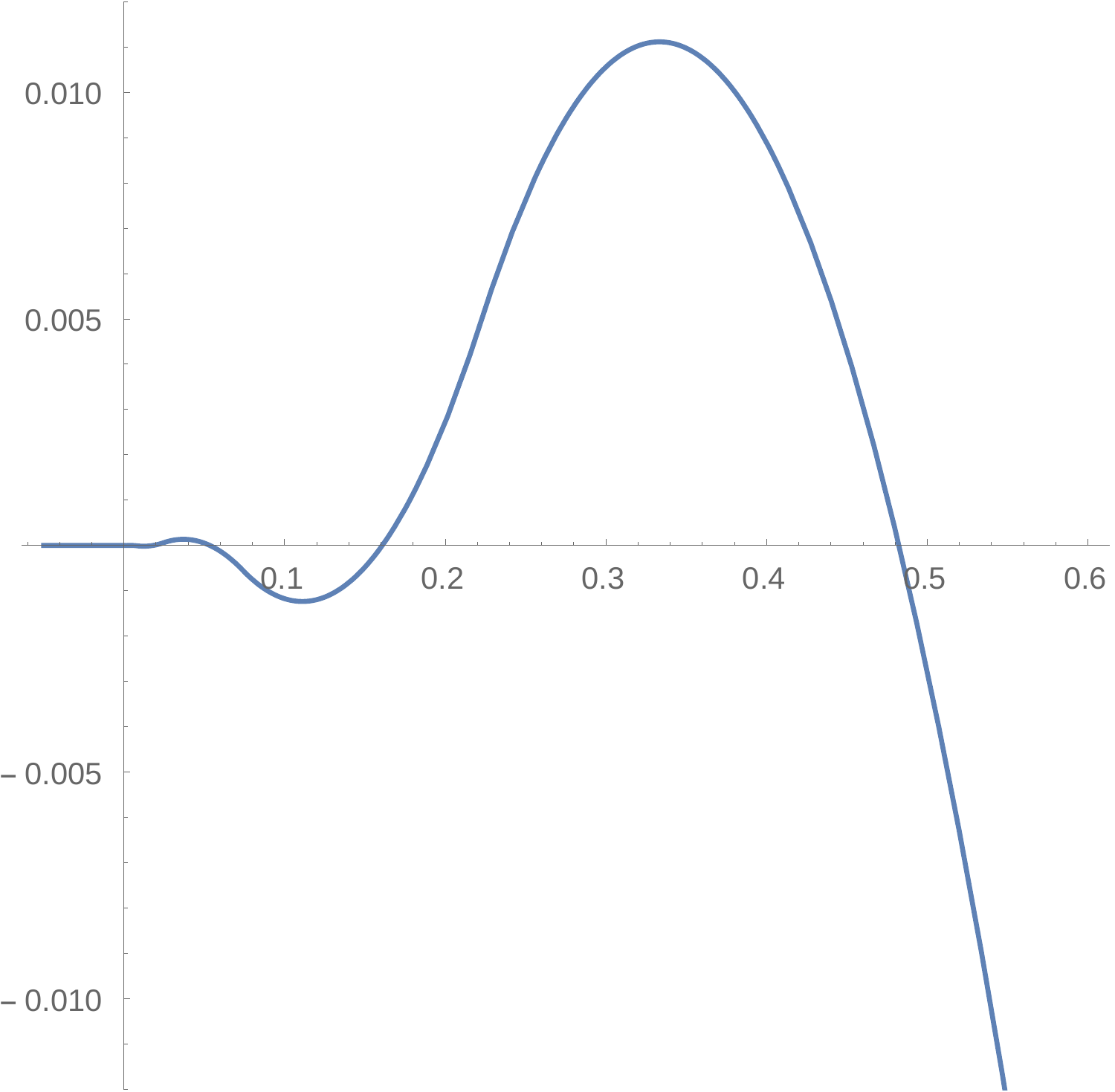}
\end{example}
\medskip

\begin{example}
Let $M=\big([1,\infty)\times\{0\}\big)\cup\{(x_1,x_2)\in \R^2\colon (x_1-1)^2+(x_2-1)^2=1,x_1<1,x_2<1\}\cup \big(\{0\}\times[1,\infty)\big)$.

Obviously, $M$ is not $C^2$, but it is readily checked that $M$ is $\lip$ 
and that $\vartheta$ is continuous.
\end{example}

\begin{example}
In general, continuity of $\vartheta$ is all we get, even if $M$ is $C^\infty$:
it is easy to construct examples of $C^\infty$-submanifolds $M$ such that
$\vartheta$ is not differentiable. We provide a drawing:
\begin{center}
\begin{picture}(0,0)%
\includegraphics{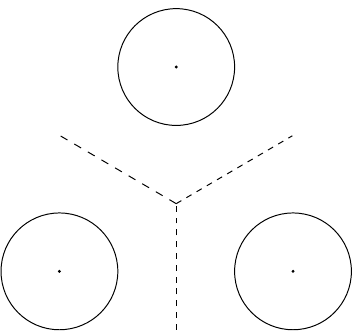}%
\end{picture}%
\setlength{\unitlength}{4144sp}%
\begingroup\makeatletter\ifx\SetFigFont\undefined%
\gdef\SetFigFont#1#2#3#4#5{%
  \reset@font\fontsize{#1}{#2pt}%
  \fontfamily{#3}\fontseries{#4}\fontshape{#5}%
  \selectfont}%
\fi\endgroup%
\begin{picture}(2686,2519)(893,-2070)
\put(2661,314){\makebox(0,0)[lb]{\smash{{\SetFigFont{10}{12.0}{\rmdefault}{\mddefault}{\updefault}{\color[rgb]{0,0,0}$k_1$}%
}}}}
\put(1806,-1361){\makebox(0,0)[lb]{\smash{{\SetFigFont{10}{12.0}{\rmdefault}{\mddefault}{\updefault}{\color[rgb]{0,0,0}$k_2$}%
}}}}
\put(3105,-110){\makebox(0,0)[lb]{\smash{{\SetFigFont{10}{12.0}{\rmdefault}{\mddefault}{\updefault}{\color[rgb]{0,0,0}$M=k_1\cup k_2\cup k_3$}%
}}}}
\put(3516,-1263){\makebox(0,0)[lb]{\smash{{\SetFigFont{10}{12.0}{\rmdefault}{\mddefault}{\updefault}{\color[rgb]{0,0,0}$k_3$}%
}}}}
\end{picture}%

\end{center}
\end{example}

We give two corollaries of \autoref{theorem:theta-continuous}, the second stating that $\cE(M)$ is a vector bundle.

\begin{corollary}\label{cor:scalingcontinuous}
Let $M$ be a $C^2$ submanifold of $\R^d$. Then the maps 
\begin{align*}
\overline{\vartheta}\colon\nu^*(M)&\to [1,\infty)\\
(\xi,v)&\mapsto \begin{cases}\frac{\vartheta\left(\xi,\frac{v}{\|v\|}\right)}{\vartheta\left(\xi,\frac{v}{\|v\|}\right)-\|v\|}, & \text{if }\vartheta\left(\xi,\frac{v}{\|v\|}\right)<\infty \text{ and } v\neq 0,\\
1 & \text{otherwise},\end{cases}
\end{align*}
\begin{align*}
\underline{\vartheta}\colon\nu(M)&\to (0,1]\\
(\xi,w)&\mapsto \begin{cases}\frac{\vartheta\left(\xi,\frac{w}{\|w\|}\right)}{\vartheta\left(\xi,\frac{w}{\|w\|}\right)+\|w\|}, & \text{if }\vartheta\left(\xi,\frac{w}{\|w\|}\right)<\infty \text{ and } w\neq 0,\\
1 & \text{otherwise},\end{cases}
\end{align*}
are continuous.
\end{corollary}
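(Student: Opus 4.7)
The plan is to split the domain of each map into three types of points and handle each separately. Throughout I would view $\vartheta$ as a continuous map from $\nu_1(M)$ into $(0,\infty]$ equipped with the one-point compactification topology at $\infty$, which is precisely the content of \autoref{theorem:theta-continuous} once $M$ is $C^2$. The three cases are: (i) an ``interior'' region where $v\neq 0$ (resp.\ $w\neq 0$) and $\vartheta(\xi,v/\|v\|)$ is finite; (ii) an ``at-infinity'' region where $v\neq 0$ (resp.\ $w\neq 0$) but $\vartheta(\xi,v/\|v\|)=\infty$; (iii) the boundary region $v=0$ (resp.\ $w=0$). Since the auxiliary map $(\xi,v)\mapsto(\xi,v/\|v\|)$ is continuous away from the zero section, its composition with $\vartheta$ provides the basic continuous building block on $\{v\neq 0\}$.

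On the interior region, continuity follows because $\vartheta(\xi,v/\|v\|)$ is a continuous strictly positive real number and the defining formulas are smooth functions of $\vartheta$ and $\|v\|$; note that the denominator $\vartheta-\|v\|$ appearing in $\overline\vartheta$ stays strictly positive on $\nu^*(M)$ by \autoref{proposition:thetanu}.(\ref{item:thetanu4}). On the at-infinity region the prescribed value is $1$; any sequence $(\xi_n,v_n)\to(\xi,v)$ with $v_n\neq 0$ gives $\vartheta(\xi_n,v_n/\|v_n\|)\to\infty$ by continuity of $\vartheta$ into $(0,\infty]$, and since $\|v_n\|\to\|v\|$ stays bounded, a direct manipulation yields $\frac{\vartheta_n}{\vartheta_n-\|v_n\|}\to 1$ and $\frac{\vartheta_n}{\vartheta_n+\|v_n\|}\to 1$.

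The main obstacle is case (iii). The target value is again $1$, so it is enough to prove that along any sequence $(\xi_n,v_n)\to(\xi,0)$ with $v_n\neq 0$ the quantity $\vartheta(\xi_n,v_n/\|v_n\|)$ is bounded below by some $c>0$ eventually, since then $\|v_n\|\to 0$ forces the quotients to converge to $1$. I would establish this bound by contradiction: if some subsequence satisfies $\vartheta(\xi_{n_k},v_{n_k}/\|v_{n_k}\|)\to 0$, compactness of the unit sphere $S\subseteq\R^d$ allows me to extract a further subsequence with $v_{n_k}/\|v_{n_k}\|\to u\in S$. Continuity of the tangent bundle (which holds since $M$ is $\lip$) combined with $v_{n_k}/\|v_{n_k}\|\in(T_{\xi_{n_k}}(M))^\perp$ forces $u\in(T_\xi(M))^\perp$, so $(\xi,u)\in\nu_1(M)$. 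Continuity of $\vartheta$ then gives $\vartheta(\xi,u)=0$, contradicting the strict positivity of $\vartheta$ on $\nu_1(M)$ recorded in \autoref{proposition:thetanu}.(\ref{item:thetanu1}).
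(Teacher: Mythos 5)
Your proof is correct and takes the route the paper implicitly intends: the corollary is stated there without its own proof, as a direct consequence of \autoref{theorem:theta-continuous}, and your argument is exactly that deduction carried out in detail (continuity of $\vartheta$ into $(0,\infty]$, plus the elementary case analysis in $v\neq 0$ finite/infinite and $v=0$). In particular, your handling of the only delicate point --- the uniform positive lower bound on $\vartheta(\xi_n,v_n/\|v_n\|)$ along sequences approaching the zero section, obtained from compactness of $S$, continuity of the tangent spaces and (lower semi-)continuity of $\vartheta$ together with \autoref{proposition:thetanu} --- is sound.
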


\begin{corollary}\label{lemma:fiberbundle}
Let $M$ be a $C^2$ submanifold of $\R^d$.
Then the map
\begin{align*}
\varphi\colon\nu^*(M)&\to \nu(M)\\
(\xi,v)&\mapsto (\xi,\overline{\vartheta}(\xi,v)\,v)
\end{align*}
is a homeomorphism with inverse
\begin{align*}
\varphi^{-1}\colon\nu(M)&\to \nu^*(M)\\
(\xi,w)&\mapsto (\xi,\underline{\vartheta}(\xi,w)\,w).
\end{align*}
Moreover, the homeomorphism $\sigma:=\varphi\circ (F^*)^{-1}\colon\cE(M)\to\nu(M)$ makes $\cE(M)$ a vector bundle with bundle map
$p\colon \cE(M)\to M$. Through transport from $\nu(M)$, the vector operations
on $p^{-1}(\{\xi\})$ for $\xi\in M$ are given by $+_\xi=\sigma^{-1}\circ+\circ
(\sigma\times\sigma)$ and $\cdot_\xi=\sigma^{-1}\circ\cdot\circ
(\mathrm{id}_\R\times\sigma)$, where $(+,\cdot)$ are the vector operations on the
normal space $(T_\xi M)^\perp$.  
\end{corollary}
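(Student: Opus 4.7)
My plan is to verify the corollary by unpacking the definitions: the formulas for $\overline{\vartheta}$ and $\underline{\vartheta}$ have been set up precisely so that multiplication by $\overline{\vartheta}$ dilates the open ``segment'' $[0,\vartheta(\xi,v/\|v\|))\cdot v/\|v\|$ to the ray $[0,\infty)\cdot v/\|v\|$, while multiplication by $\underline{\vartheta}$ is the inverse contraction. Everything else follows from Lemma \ref{right_projection_lemma} and Corollary \ref{cor:scalingcontinuous}.

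First I would check that $\varphi$ and $\varphi^{-1}$ take values in the claimed targets. For $(\xi,v)\in\nu^*(M)$ with $v\neq 0$, \autoref{proposition:thetanu} gives $\|v\|<\vartheta(\xi,v/\|v\|)$, so the denominator in $\overline{\vartheta}$ is positive and $\overline{\vartheta}(\xi,v)v$ is a well-defined normal vector at $\xi$. Conversely, for any $(\xi,w)\in\nu(M)$ with $w\neq 0$, one has $\underline{\vartheta}(\xi,w)\in(0,1]$ with $\underline{\vartheta}(\xi,w)\|w\|<\vartheta(\xi,w/\|w\|)$, so $\underline{\vartheta}(\xi,w)w\in\nu^*(M)$ by item \ref{item:thetanu4} of \autoref{proposition:thetanu}.

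Next I would verify $\varphi^{-1}\circ\varphi=\mathrm{id}$ by direct substitution. Setting $t:=\vartheta(\xi,v/\|v\|)$ and $w:=\overline{\vartheta}(\xi,v)v=\tfrac{t}{t-\|v\|}v$, the vector $w$ is a positive multiple of $v$, so $w/\|w\|=v/\|v\|$ and $\vartheta(\xi,w/\|w\|)=t$. Then
\[
\underline{\vartheta}(\xi,w)\,w \;=\; \frac{t}{t+\|w\|}\,w \;=\; \frac{t}{t+\tfrac{t\|v\|}{t-\|v\|}}\cdot\frac{t}{t-\|v\|}v \;=\; v,
\]
and the other composition is analogous. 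The boundary cases $v=0$, $w=0$, and $\vartheta=\infty$ are immediate since $\overline{\vartheta}=\underline{\vartheta}=1$ there. Continuity of both maps follows directly from \autoref{cor:scalingcontinuous}, so $\varphi$ is a homeomorphism.

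Finally, since $F^*$ is a homeomorphism by \autoref{right_projection_lemma}, the composition $\sigma=\varphi\circ(F^*)^{-1}$ is a homeomorphism $\cE(M)\to\nu(M)$. For $x\in\cE(M)$, \autoref{remark:first-order} and \autoref{right_projection_lemma} give $(F^*)^{-1}(x)=(p(x),x-p(x))$, and $\varphi$ preserves the first coordinate, so $\sigma(x)=(p(x),\overline{\vartheta}(p(x),x-p(x))(x-p(x)))$; in particular $\sigma$ sends the fiber $p^{-1}(\{\xi\})$ bijectively onto $\{\xi\}\times(T_\xi M)^\perp$. Since $\nu(M)$ is a vector bundle over $M$ via the first-coordinate projection (with fiber $(T_\xi M)^\perp$ at $\xi$), transporting this structure through $\sigma$ makes $\cE(M)$ a vector bundle with bundle projection $p$ and fiber operations $+_\xi$ and $\cdot_\xi$ as stated. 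I do not foresee a genuine obstacle here: the technical work was done in \autoref{theorem:theta-continuous} and \autoref{cor:scalingcontinuous}, and the rest is an exercise in tracing definitions, with the only mildly delicate point being the careful handling of the boundary cases $v=0$ and $\vartheta(\xi,v/\|v\|)=\infty$ in the composition computation.
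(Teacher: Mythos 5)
Your verification is correct and follows exactly the route the paper intends (the paper omits the proof, treating the corollary as an immediate consequence of \autoref{cor:scalingcontinuous}, \autoref{right_projection_lemma} and \autoref{proposition:thetanu}): well-definedness via item \ref{item:thetanu4} of \autoref{proposition:thetanu}, the explicit inverse computation, continuity from \autoref{cor:scalingcontinuous}, and transport of the vector bundle structure of $\nu(M)$ through $\sigma=\varphi\circ(F^*)^{-1}$. No gaps; the boundary cases $v=0$ and $\vartheta=\infty$ are handled correctly.
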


Consider the following concept which has first been defined in 
\cite{federer1957}.

\begin{definition}
Let $M\subseteq \R^d$. 
The {\em reach} of $M$ is the largest $\varepsilon_0$ (if it exists)
such that $M^{\varepsilon_0}\subseteq \unpp(M)$, i.e.,
\[
\reach(M)=\sup\{\varepsilon\in (0,\infty)\colon M^\varepsilon\subseteq \unpp(M)\}\,.
\]
Note that $\reach(M)\in [0,\infty]$.
We call $M$ a set of {\em positive reach} iff $\reach(M)>0$. 
\end{definition}

The following generalizes \cite[Lemma]{Foote1984}, 
which states that 
compact $C^2$ submanifolds have positive reach.

\begin{proposition}\label{theorem:reach}
Let $M$ be a $\lip$ submanifold. 
Then 
$\reach(M)\le \inf\{\vartheta(\xi,v)\colon (\xi,v)\in \nu_1(M)\}$,
with equality if $M$ is a closed subset of $\R^d$.

In particular, if $M$ is compact, then 
$\reach(M)=\min\{\vartheta(\xi,v)\colon (\xi,v)\in \nu_1(M)\}>0$.
\end{proposition}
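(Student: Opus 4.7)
The plan is to establish both inequalities of the claimed identity and then deduce the compact case from lower semi-continuity of $\vartheta$.

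For the inequality $\reach(M)\le \vartheta(\xi,v)$, fix any $(\xi,v)\in \nu_1(M)$ and any $r<\reach(M)$, and pick $r'\in(r,\reach(M))$. Since $M^{r'}\subseteq \unpp(M)$ and $M^{r'}$ is open, the very definition of $\cE(M)$ gives $M^{r'}\subseteq \cE(M)$, and in particular $M^r\subseteq \cE(M)$. For every $s\in(0,r)$, the point $\xi+sv$ has distance at most $s<r$ from $\xi\in M$, so ${]}\xi,\xi+rv{[}\,\subseteq M^r\subseteq \cE(M)$. Hence $\vartheta(\xi,v)\ge r$; letting $r\nearrow \reach(M)$ and taking the infimum over $(\xi,v)$ yields the first inequality.

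For the reverse inequality when $M$ is closed, fix $r<\inf\{\vartheta(\xi,v):(\xi,v)\in \nu_1(M)\}$ and let $x\in M^r$. If $x\in M$ there is nothing to show, so assume $x\notin M$, which gives $d(x,M)<r$. Closedness of $M$, together with a standard compactness argument applied to a minimizing sequence (bounded in $\R^d$), produces $\xi\in M$ with $\|x-\xi\|=d(x,M)$. Applying the first-order necessary condition to $t\mapsto \|x-\gamma(t)\|^2$ for any $C^1$ curve $\gamma$ in $M$ with $\gamma(0)=\xi$ gives $\langle \gamma'(0),x-\xi\rangle=0$, so $x-\xi\perp T_\xi M$. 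With $v:=\|x-\xi\|^{-1}(x-\xi)$ we thus have $(\xi,v)\in \nu_1(M)$ and $\|x-\xi\|<r<\vartheta(\xi,v)$, and \autoref{proposition:thetanu}\,(\ref{item:thetanu2}) yields $p(x)=\xi$, whence $x\in \unpp(M)$. Therefore $M^r\subseteq \unpp(M)$, i.e., $r\le \reach(M)$; letting $r\nearrow \inf\vartheta$ closes the argument.

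For the final claim, if $M$ is compact then $\nu_1(M)\subseteq \R^{2d}$ is bounded; it is also closed, because continuity of the tangent bundle on the $C^1$ submanifold $M$ transfers the condition $v\perp T_\xi M$ along convergent sequences. Hence $\nu_1(M)$ is compact, and since $\vartheta$ is lower semi-continuous by \autoref{theorem:theta-continuous}\,(1) and strictly positive by \autoref{proposition:thetanu}\,(\ref{item:thetanu1}), its infimum over $\nu_1(M)$ is attained and strictly positive, giving $\reach(M)=\min\vartheta>0$. The main subtlety I expect is in the second step: closedness is essential for the existence of the nearest point, without which one can have $\inf\vartheta=\infty$ yet $\reach(M)=0$ (for example, the open half-line $\{(x,0):x>0\}\subseteq \R^2$); the orthogonality $x-\xi\perp T_\xi M$ itself then reduces to the classical first-order necessary condition at a minimizer and uses only the $C^1$ regularity that comes with the $\lip$ hypothesis.
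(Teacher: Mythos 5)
Your proposal is correct and follows essentially the same route as the paper: $M^\varepsilon\subseteq\cE(M)$ forces $\vartheta\ge\varepsilon$ for the first inequality, closedness gives a nearest point whose orthogonality plus \autoref{proposition:thetanu}\,(\ref{item:thetanu2}) yields $M^r\subseteq\unpp(M)$ for the reverse one, and the compact case follows from lower semi-continuity of $\vartheta$ on the compact set $\nu_1(M)$. The only cosmetic differences are that you derive the orthogonality $x-\xi\perp T_\xi(M)$ from the first-order condition rather than citing \autoref{remark:first-order}, and you justify compactness of $\nu_1(M)$ via closed-and-bounded rather than the paper's product argument; both are fine.
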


\begin{proof}
%
%
If $\reach(M)=0$, then trivially $\reach(M)\le \inf\{\vartheta(\xi,v)\colon (\xi,v)\in \nu_1(M)\}$.
If $\reach(M)>0$, let $\varepsilon \in \big(0,\reach(M)\big)$. Then $M^\varepsilon$ is open and 
$M^\varepsilon \subseteq \unpp(M)$. Thus $M^\varepsilon\subseteq \cE(M)$. 
So for every $(\xi,v)\in \nu_1(M)$ and every $r\in (0,\varepsilon)$ we have
$\xi+rv\in\cE(M)$ and thus $r<\vartheta(\xi,v)$, so $\varepsilon \le \vartheta(\xi,v)$.
From this it follows that $\reach(M)\le \vartheta(\xi,v)$ for all
$(\xi,v)\in \nu_1(M)$.

\medskip

Suppose now that $M$ is a closed subset of $\R^d$.
If $\inf\{\vartheta(\xi,v)\colon (\xi,v)\in \nu_1(M)\}=0$ there is nothing to show. Otherwise 
let $\varepsilon\in(0,\infty)$ with   $\varepsilon<\inf\{\vartheta(\xi,v)\colon (\xi,v)\in \nu_1(M)\}$ and let $x\in M^\varepsilon\setminus M$. 
Let $\xi\in M$ be a nearest point to $x$ on $M$.
Then $(\xi,x-\xi)\in \nu(M)$ by \autoref{remark:first-order}. We have
$\vartheta(\xi,\tfrac{x-\xi}{\|x-\xi\|})>\varepsilon$,
so $\xi+r \tfrac{x-\xi}{\|x-\xi\|}\in \cE(M)$ for all $r\in (0,\varepsilon)$.
In particular, $x\in \cE(M)$ and $\xi=p(x)$. 
So $M^\varepsilon\subseteq \unpp(M)$, and therefore $\reach(M)\ge \varepsilon$.

\medskip

Now assume $M$ is compact. Then also $\nu_1(M)$ 
is compact since it is homeomorphic to the product of compact spaces. By \autoref{theorem:theta-continuous}, $\vartheta$ is 
lower semi-continuous on $\nu_1(M)$, and therefore attains its minimum
in some point $(\xi_0,v_0)\in \nu_1(M)$.
By \autoref{theorem:dudek-holly}, $\vartheta(\xi_0,v_0)>0$.
\end{proof}

\begin{remark}
Let $M\subseteq \R^d$. 
If $M$ has positive reach, then $M$ is closed:
suppose $M$ were not closed. Let $z \in \bar  M \setminus M$. 
Then $z$ has no nearest point on $M$, and $z\in M^\varepsilon$ 
for all $\varepsilon\in (0,\infty)$. Thus, $M^\varepsilon \notin \unpp(M)$
for every $\varepsilon\in (0,\infty)$, so that $\reach(M)=0$.
\end{remark}

\section{Derivatives of $p$ and $\delta_M$}\label{sec:regularity}

A proof of the following theorem can be found in \cite{dudekholly}.
We give a different proof here, which makes it an immediate 
consequence of 
\autoref{right_projection_lemma} and \autoref{lemma:diffeomorphism}.

\begin{theorem}\label{theorem:diffproj}
\pdifferentiable{}
\end{theorem}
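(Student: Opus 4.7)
The plan is to represent $p$ via the inverse of the endpoint map and then exploit an algebraic cancellation to gain one degree of smoothness for $\delta_M$ over $p$. By \autoref{remark:F-Ck-1}, $\nu(M)$ is a $C^{k-1}$ manifold and $F$ is $C^{k-1}$, so its restriction $F^*\colon\nu^*(M)\to\cE(M)$ is also $C^{k-1}$. For $k\ge 2$, \autoref{lemma:diffeomorphism} tells us that $F^*$ is a diffeomorphism, hence its derivative is invertible at every point of $\nu^*(M)$; applying the $C^{k-1}$ inverse function theorem pointwise yields $(F^*)^{-1}\in C^{k-1}(\cE(M),\nu^*(M))$. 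Composing with the canonical $C^{k-1}$ bundle projection $\pi_1\colon\nu(M)\to M$, $(\xi,v)\mapsto\xi$, we obtain $p=\pi_1\circ(F^*)^{-1}\in C^{k-1}(\cE(M))$. For $k=1$ only continuity is required, and this is supplied by \autoref{lemma:continuous}.

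Turning to $\delta_M$ on $\cE(M)\setminus M$, I would work with the square $\delta_M^2(x)=\|x-p(x)\|^2$, which will turn out to be one order smoother than $\delta_M$ itself. Assume first $k\ge 2$, so $p$ is $C^1$; direct differentiation gives
\[
\nabla\delta_M^2(x)=2(x-p(x))-2\,Dp(x)^{\top}(x-p(x)).
\]
Since $p$ takes values in $M$, the image of $Dp(x)$ lies in $T_{p(x)}(M)$, whereas $x-p(x)\perp T_{p(x)}(M)$ by \autoref{remark:first-order}. Consequently $Dp(x)^{\top}(x-p(x))=0$ and $\nabla\delta_M^2(x)=2(x-p(x))$, which is $C^{k-1}$ because $p$ is. Hence $\delta_M^2\in C^k$, and since $\delta_M>0$ on $\cE(M)\setminus M$ and $\sqrt{\,\cdot\,}$ is $C^\infty$ on $(0,\infty)$, taking the square root yields $\delta_M\in C^k(\cE(M)\setminus M)$.

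For $k=1$ the computation above is not yet justified, so I would verify $\delta_M^2\in C^1$ directly. Sandwiching $\delta_M^2(y)-\delta_M^2(x)$ between $\|y-p(x)\|^2-\|x-p(x)\|^2$ from above and $\|y-p(y)\|^2-\|x-p(y)\|^2$ from below, expanding both quadratics, and using continuity of $p$ from \autoref{lemma:continuous} to absorb the cross term $\langle y-x,p(x)-p(y)\rangle$ as $o(\|y-x\|)$, one obtains
\[
\delta_M^2(y)-\delta_M^2(x)=2\langle x-p(x),y-x\rangle+o(\|y-x\|),
\]
so $\delta_M^2$ is Fr\'echet differentiable at $x$ with continuous gradient $2(x-p(x))$. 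The conclusion $\delta_M\in C^1(\cE(M)\setminus M)$ then follows by taking square roots as above.

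The main obstacle I expect is bootstrapping the (a priori $C^1$) diffeomorphism statement of \autoref{lemma:diffeomorphism} to the full $C^{k-1}$ regularity of $(F^*)^{-1}$; this is the standard inverse function theorem in the $C^{k-1}$ category, but requires keeping the regularity of $\nu(M)$ and $F$ from \autoref{remark:F-Ck-1} in view. The cancellation $Dp(x)^{\top}(x-p(x))=0$ is the crucial step for gaining a degree of smoothness in $\delta_M$, and it is essentially automatic once the first-order normality from \autoref{remark:first-order} is available.
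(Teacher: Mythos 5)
Your proposal is correct and follows essentially the same route as the paper: $C^{k-1}$ regularity of $p$ is obtained from the diffeomorphism $F^*$ of \autoref{lemma:diffeomorphism} together with the inverse function theorem and composition with the bundle projection (the paper writes $p=F^*\circ\tilde p\circ (F^*)^{-1}$, which is the same map as your $\pi_1\circ (F^*)^{-1}$), and the regularity of $\delta_M$ comes from the identical cancellation $Dp(x)^{\top}(x-p(x))=0$ giving $D(\delta_M^2)(x)=2(x-p(x))^{\top}$. The only difference is cosmetic: for $k=1$ the paper cites \citet[Theorem 2]{Foote1984} for differentiability of $\delta_M$, whereas you reprove it with the sandwich argument, which is in substance Foote's argument.
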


\begin{proof}
If $k=1$ then the claim is only that $p$ is continuous and 
$\delta_M$ is differentiable.
But continuity of $p$ is the content of \autoref{lemma:continuous},
differentiability of $\delta_M$ is settled by \citet[Theorem 2]{Foote1984}.

Consider now  the case $k\ge 2$.
We show that $p$ is differentiable.
We have already shown in \autoref{lemma:diffeomorphism} that 
$F^*\colon \nu^*(M)\to \cE(M)$
is a diffeomorphism, and by \autoref{right_projection_lemma},
$p(F^*(\xi,v))=\xi$ for all $(\xi,v)\in \nu^*(M)$.
We have $(F^*)^{-1}(x)=(p(x),x-p(x))$ for all $x\in \cE(M)$,
such that 
\begin{align*}
p(x)=F^*(p(x),0)=F^*(\tilde p(p(x),x-p(x)))=F^*(\tilde p((F^*)^{-1}(x)))
\end{align*}
for all $x\in \cE(M)$, where $\tilde p\colon \nu(M)\to \nu(M)$ is the
projection onto $M\times \{0\}\subseteq\nu(M)$.

We obtain $p=F^*\circ \tilde p\circ (F^*)^{-1}$.
The function $F^*$ is $C^{k-1}$ and so is therefore also $(F^*)^{-1}$.
The projection $\tilde p$ is clearly $C^{k-1}$, and so is $p$.\\

For the regularity of $\delta_M$ we use the argument given 
in \cite{Foote1984}, which we repeat here for convenience of the reader: 
$\delta_M(x)=\|x-p(x)\|$ for all $x\in \cE(M)$.
Then for $x\in \cE(M)\setminus M$,
\[
(D(\delta_M^2))(x) v=2(x-p(x))^\top (v-(Dp(x)) v)\,.
\]
Since $p\colon \cE(M)\to M$ and thus $Dp$ is a mapping 
between the tangent bundles, i.e., $Dp:T(\cE(M))\to T(M)$,
$Dp(x)v\in T_{p(x)}(M)$, so that $(x-p(x))^\top Dp(x) v=0$.
Hence 
$(D \delta_M^2)(x)=2(x-p(x))^\top$, which is $C^{k-1}$, since this holds for
$p$.  Thus $\delta_M$ is $C^k$.
\end{proof}

\begin{remark}
In contrast to $p$, the distance function is defined 
on the whole of $\R^d$.
Moreover, $\delta_M$ is continuous on $\R^d$.
However, it is easy to find examples of $C^\infty$-submanifolds so 
that $\delta_M$ is not differentiable on $\R^d\setminus M$,
for example, if $M=\{(x,y)\in \R^2\colon y=x^2 \}$, then
$\delta_M$ is not differentiable in the points 
$\{(0,y)\colon y> 1/2\}$.
\end{remark}

\citet[Lemma 4'.1]{dudekholly} compute the (Frechet-)derivative of $p$ .  In the following theorem we give a slightly different form which
is more suitable for showing the subsequent 
\autoref{lemma:Dp-bounded} and \autoref{corollary4}.
The proof uses a method different from that in \cite{dudekholly}.

\setcounter{maintheorem}{2}
\begin{maintheorem}\label{theorem:Dp-formula}
\theoremDpformula{}
\end{maintheorem}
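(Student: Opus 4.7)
The plan is to exploit that \autoref{right_projection_lemma} yields $p\circ F^*=\pi_M$, with $\pi_M\colon(\xi,v)\mapsto\xi$ the bundle projection, and that \autoref{lemma:diffeomorphism} makes $F^*\colon\nu^*(M)\to\cE(M)$ a $C^1$ diffeomorphism. Writing $p=\pi_M\circ(F^*)^{-1}$ and differentiating, the task reduces to computing $DF^*(\xi,v)$ at $(\xi,v)=(p(x),x-p(x))$ in a convenient chart of $\nu(M)$ and inverting its tangential part.

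Fix $x\in\cE(M)\setminus M$ and set $\xi=p(x)$, $w=x-\xi$, $v=w/\|w\|$. Using the construction mentioned just before \autoref{lemma:normal-vectors}, I choose a $C^1$ orthonormal moving frame $(V,n_{m+1},\dots,n_d)$ of $\nu(V)$ with $\xi\in V$, then parametrize $\nu(M)$ near $(\xi,w)$ by $(\eta,c)\mapsto\bigl(\eta,\sum_j c_j n_j(\eta)\bigr)$. In these coordinates $F^*(\eta,c)=\eta+\sum_j c_j n_j(\eta)$, and differentiating at the point with coefficients $c_j^0$ satisfying $w=\sum_j c_j^0 n_j(\xi)$, in the direction $(t,c')\in T_\xi M\times\R^{d-m}$, gives
\[
DF^*(\xi,w)(t,c')=t+\sum_j c_j^0 Dn_j(\xi)\,t+\sum_j c_j' n_j(\xi).
\]
Applying $P_{T_\xi M}$ annihilates the last sum. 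The uniqueness assertion in \autoref{lemma:normal-vectors} lets me extend the shape operator linearly to $\nu(M)$ via $L_{\xi,w}:=-P_{T_\xi M}Dn(\xi)$ for any $C^1$ normal field $n$ with $n(\xi)=w$; this identifies $P_{T_\xi M}\sum_j c_j^0 Dn_j(\xi)=-L_{\xi,w}=-\|w\|L_{\xi,v}$, whence
\[
P_{T_\xi M}\,DF^*(\xi,w)(t,c')=\bigl(\id_{T_\xi M}-\|w\|L_{\xi,v}\bigr)\,t.
\]

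To invert, I note that \autoref{lemma:curvature} gives $\|w\|<\vartheta(\xi,v)\le\varrho(\xi,v)$ and \autoref{theorem:curvature} bounds the largest eigenvalue of $L_{\xi,v}$ by $1/\varrho(\xi,v)$, so $\|w\|\lambda<1$ for every eigenvalue $\lambda$ of $L_{\xi,v}$, making $\id_{T_\xi M}-\|w\|L_{\xi,v}$ invertible. Writing $y=DF^*(\xi,w)(t,c')$ and using $p\circ F^*=\pi_M$ to identify $Dp(x)\,y=t$, the displayed relation rearranges to the claimed formula. The case $x\in M$ is handled directly: $p|_M=\id_M$ gives $Dp(x)|_{T_xM}=\id$, and \autoref{theorem:dudek-holly} yields $p(x+sv)=x$ for $v\in(T_xM)^\perp$ and $|s|$ small, so $Dp(x)$ vanishes on $(T_xM)^\perp$, i.e.\ $Dp(x)=P_{T_{p(x)}M}$.

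The subtlety I expect to be the main obstacle is justifying the linear extension of the shape operator to non-unit normals and the resulting identification $P_{T_\xi M}\sum_j c_j^0 Dn_j(\xi)=-L_{\xi,w}$ via \autoref{lemma:normal-vectors}; once this is in place, the computation of $DF^*$ and the inversion of its tangential component are essentially routine, with invertibility secured by combining \autoref{lemma:curvature} with \autoref{theorem:curvature}.
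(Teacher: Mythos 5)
Your proposal is correct and takes essentially the same route as the paper: differentiating $F^*$ in a frame-induced chart and inverting is just the transposed form of the paper's Step~1, which derives $P_{T_{p(x)}(M)}=\big(\id_{T_{p(x)}(M)}-\|x-p(x)\|L_{p(x),v}\big)Dp(x)$ by differentiating the frame decomposition of a curve through $x$. Your identification of the frame sum with $-\|x-p(x)\|L_{p(x),v}$ via the uniqueness part of \autoref{lemma:normal-vectors} and your invertibility argument combining \autoref{lemma:curvature} with \autoref{theorem:curvature} coincide with the paper's Steps~3 and~2.
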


We see that, in general, $Dp(x)$ may explode as $x$ approaches the boundary of
$\cE(M)$, even for $M$ that are well behaved, like a circle in the plane.
The next two results state that the higher derivatives of $p$ on  $M^\varepsilon\setminus M$ with $\varepsilon<\reach(M)$ are bounded, provided the higher-reivatives of normal vectors
are bounded.

\begin{corollary}\label{lemma:Dp-bounded}
Let 
$M\subseteq \R^d$ be a $C^k$ submanifold with $k\ge 2$, 
let $\reach(M)>0$ and $\varepsilon\in (0,\reach(M))$. 
Suppose there exists a constant $K\in (0,\infty)$ and 
a family of $C^{k-1}$ moving frames $(V_i,n_{i,m+1},\dots,n_{i,d})_{i\in I}$ 
such that $M=\bigcup_{i\in I} V_i$ and 
$\|D^jn_{i,\ell}\|$ is bounded by $K$ for every $i\in I$, 
$\ell\in\{m+1,\dots,d\}$ 
and $j\in\{2,\dots,k-1\}$.
Then $D^jp$ is bounded on  $M^{\varepsilon}$, for every 
$j\in \{2,\dots,k-1\}$.
\end{corollary}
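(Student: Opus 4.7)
\emph{Base case.} The plan is to combine the explicit formula in \autoref{theorem:Dp-formula} with the reach bound, and then to iterate. First, by \autoref{theorem:reach} and \autoref{lemma:curvature}, $\vartheta \ge \reach(M)$ on $\nu_1(M)$, and \autoref{theorem:curvature} then forces the positive eigenvalues of $L_{p(x),v}$ to be at most $1/\reach(M)$; running the same argument with $-v$ in place of $v$ gives $\|L_{p(x),v}\|_{\mathrm{op}} \le 1/\reach(M)$. Since $\|x-p(x)\| < \varepsilon < \reach(M)$ on $M^\varepsilon$, a Neumann-series bound yields $\|(\id_{T_{p(x)}M}-\|x-p(x)\|L_{p(x),v})^{-1}\|_{\mathrm{op}} \le (1-\varepsilon/\reach(M))^{-1}$, and \autoref{theorem:Dp-formula} together with $\|P_{T_{p(x)}M}\|_{\mathrm{op}}=1$ produces a uniform bound on $\|Dp(x)\|$ over $M^\varepsilon$ depending only on $\varepsilon$ and $\reach(M)$.

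\emph{Inductive step.} For $j \ge 2$, work locally: for $x \in M^\varepsilon$ choose $i \in I$ with $p(x) \in V_i$ and start from the moving-frame expression derived in Step 1 of the proof of \autoref{theorem:Dp-formula},
\begin{equation*}
Dp(x) = B_i(x)^{-1}\, P_{T_{p(x)}M},\quad B_i(x) = \id + \sum_{\ell=m+1}^{d} \langle x-p(x), n_{i,\ell}(p(x))\rangle\, P_{T_{p(x)}M}\, Dn_{i,\ell}(p(x)),
\end{equation*}
together with the moving-frame formula $P_{T_{p(x)}M} = \id - \sum_\ell n_{i,\ell}(p(x))\, n_{i,\ell}(p(x))^\top$. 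Differentiating this expression repeatedly in $x$ via the chain rule and the identity $D(A^{-1}) = -A^{-1}(DA)A^{-1}$, one obtains by induction a fixed polynomial expression for $D^j p(x)$ in the ingredients $\{x-p(x)\}$, $\{n_{i,\ell}(p(x)),\ Dn_{i,\ell}(p(x)), \ldots, D^j n_{i,\ell}(p(x))\}_{\ell}$, and $\{Dp(x), \ldots, D^{j-1} p(x)\}$, with the only rational factors entering as powers of $B_i(x)^{-1}$.

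\emph{Conclusion and main obstacle.} Each building block is uniformly bounded on $M^\varepsilon$: $\|x-p(x)\|<\varepsilon$; $\|n_{i,\ell}\|=1$; the derivatives of $n_{i,\ell}$ up to order $j \le k-1$ are bounded by $K$ uniformly in $i$ by the standing moving-frame hypothesis; $B_i(x)^{-1}$ is bounded by the base case; and $Dp,\ldots,D^{j-1}p$ are bounded by the induction hypothesis. Since none of these bounds depend on the particular $i$, we obtain a global bound on $\|D^j p(x)\|$ on $M^\varepsilon$. The main obstacle is the bookkeeping certifying the inductive structure: each application of $D_x$ raises the order of exactly one normal-derivative factor (through the chain-rule factor $Dp$ appearing at each composition $n_{i,\ell}\circ p$) and introduces no new inverses beyond further powers of $B_i^{-1}$, so that at step $j \le k-1$ the formula only calls on $D^r n_{i,\ell}$ with $r \le j \le k-1$, within the range covered by the hypothesis.
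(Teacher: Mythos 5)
Your proposal is correct and follows essentially the same route as the paper: the same Neumann-series bound on $\big(\id_{T_{p(x)}(M)}-\|x-p(x)\|L_{p(x),v}\big)^{-1}$ obtained from \autoref{theorem:reach}, \autoref{lemma:curvature} and \autoref{theorem:curvature} (handling negative eigenvalues via $-v$), followed by repeated differentiation of the moving-frame identity from Step 1 of \autoref{theorem:Dp-formula}, with induction showing every ingredient is uniformly bounded. The only remark worth making is that, exactly like the paper's own argument, your induction also treats the first derivatives $Dn_{i,\ell}$ as uniformly bounded even though the stated hypothesis only covers orders $2,\dots,k-1$; this is no worse than the original proof.
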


\begin{proof}
Let $x\in M^\varepsilon$, i.e., $\|x-p(x)\|<\varepsilon$.
Let $\lambda_1,\dots,\lambda_m$ be the eigenvalues of $L_{\xi,v}$.
Since $L_{\xi,v}$ is self-adjoint by \autoref{theorem:curvature},  
$\|L_{\xi,v}\|=\max(|\lambda_1|,\dots,|\lambda_m|)$, and there is no loss 
of generality in assuming 
$\|L_{\xi,v}\|=|\lambda_1|$. 

If $\lambda_1\ge 0$, then 
by \autoref{theorem:curvature}, 
$\|L_{\xi,v}\|=\varrho(\xi,v)^{-1}$.  
We have $\|x-p(x)\|<\varepsilon<\varepsilon_0\le \vartheta(\xi,v)\le\varrho(\xi,v)$, by 
$x\in M^\varepsilon$, \autoref{theorem:reach}, and 
\autoref{lemma:curvature}. Therefore,   
\[
\big\|\|x-p(x)\|L_{p(x),v}\big\|
=
\|x-p(x)\|\left\|L_{p(x),v}\right\| 
<\varepsilon\varrho(\xi,v)^{-1}
\le \frac{\varepsilon}{\varepsilon_0}<1\,.
\]
If $\lambda_1<0$, then 
$\|L_{\xi,v}\|=\|L_{\xi,-v}\|=-\lambda_1=\varrho(\xi,-v)$.  
From this we get 
 $\|x-p(x)\|<\varepsilon<\varepsilon_0\le \vartheta(\xi,-v)\le\varrho(\xi,-v)$, 
and thus again 
\[
\big\|\|x-p(x)\|L_{p(x),v}\big\|
=
\|x-p(x)\|\left\|L_{p(x),-v}\right\| 
<\varepsilon\varrho(\xi,-v)^{-1}
\le \frac{\varepsilon}{\varepsilon_0}<1\,.
\]
 
Therefore 
$\id_{T_{p(x)}(M)}-\|x-p(x)\|L_{p(x),v}$
with $v=\tfrac{x-p(x)}{\|x-p(x)\|}$
is invertible,
and 
\begin{align*}
\Big\|\big(\id_{T_{p(x)}(M)}-\|x-p(x)\|L_{p(x),v}\big)^{-1}\Big\|
&=\Big\|\sum_{\ell=0}^\infty \big(\|x-p(x)\|L_{p(x),v})^\ell\Big\|\\
&\le\sum_{\ell=0}^\infty \big\|\|x-p(x)\|L_{p(x),v}\big\|^\ell\\
&\le\sum_{\ell=0}^\infty \left(\frac{\varepsilon}{\varepsilon_0}\right)^\ell
<\big(1-\frac{\varepsilon}{\varepsilon_0}\big)^{-1}\,.
\end{align*}

Let now $(V,n_{m+1},\dots,n_d)$ be a moving frame with $x\in V$ and 
derivatives bounded by $K$.
Write down  equation \eqref{eqn:Dp}  
and write $n$ for the matrix $(n_{m+1},\dots,n_d)$: 
\begin{align*}
\id_{\R^d}-n(p(x))n(p(x))^\top&=J(x)Dp(x)\,,
\end{align*}
where $J(x):=\Big(\id_{T_{p(x)}(M)}+\sum_j\big\langle x-p(x), n_j(p(x))\big\rangle 
(D n_j)(p(x))\Big)$.
From this and the fact that 
$J(x)$ is invertible with uniformly bounded derivative,
we get 
\begin{align}\label{eqn:Dp2}
Dp(x)=
J(x)^{-1}\big(\id_{\R^d}-n(p(x))n(p(x))^\top\big)
\end{align}
and thus $Dp$ is uniformly bounded.
Differentiating the right-hand side of \eqref{eqn:Dp2} 
involves sums of products of $n$, $Dn$, $D^2n$, $p$, $Dp$, 
their rows, columns and transposes, and  
$J(x)^{-1}$, which are all bounded.
From this we get boundedness of $D^2p$.
By induction we get boundedness of $D^jp$, $j\in\{1,\dots,k-1\}$ from
boundedness of $D^jn$, $j\in\{1,\dots,k-1\}$. 
\end{proof}

The situation simplifies if $M$ is hypersurface, i.e., a $(d-1)$-dimensional 
submanifold of $\R^d$. Then  
on every connected open subset $V\subseteq M$ there exist at most 
two $C^1$ functions $n$ with the property that $(V,n)$ is a unit normal
field, and those functions differ only by their sign. If $M$ is not orientable, 
then (per definition) it is not possible to find a unit normal vector $(V,n)$ with $V=M$. Nevertheless, every $(V,n)$ is automatically $C^{k-1}$ and $\sup_{\xi\in M}\|D^jn(\xi)\|$ does not depend on the choice of a particular unit normal vector.
With this, the following corollary is an immediate consequence of 
 \autoref{lemma:Dp-bounded}.

\begin{corollary}\label{corollary4}
Let $M\subseteq \R^d$ be a $C^k$ hypersurface with $k\ge 2$.
Moreover, let $\reach(M)>0$ and $\varepsilon\in (0,\reach(M))$. 
If there exists $K\in (0,\infty)$ such that $\sup_{\xi\in M}\|D^jn\|\le K$ for every $j\in\{2,\dots,k-1\}$,
then $D^jp$ is bounded on  $M^{\varepsilon}$ for every 
$j\in \{2,\dots,k-1\}$.
\end{corollary}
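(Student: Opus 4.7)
The plan is to reduce directly to \autoref{lemma:Dp-bounded} by building a suitable family of moving frames. Since $M$ is a hypersurface, $m=d-1$ and an orthonormal moving frame of $\nu(V)$ consists of a single unit normal field $(V,n)$. So the task is to exhibit an open cover $\{V_i\}_{i\in I}$ of $M$ together with unit normal fields $n_i\colon V_i\to \R^d$ for which $\sup_i\sup_{\xi\in V_i}\|D^j n_i(\xi)\|\le K$ for all $j\in\{1,\dots,k-1\}$, and then invoke \autoref{lemma:Dp-bounded}.

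First I would argue local existence: by \autoref{lemma:normal-vectors}, each $\xi\in M$ admits an open neighborhood $V\subseteq M$ supporting a $C^{k-1}$ unit normal field. Choosing such a $V_\xi$ for every $\xi$ yields an open cover $\{V_i\}_{i\in I}$ of $M$ with corresponding $C^{k-1}$ unit normal fields $n_i$. On each $V_i$, because $M$ is a hypersurface, any other continuous unit normal field either equals $n_i$ or equals $-n_i$ on each connected component of $V_i$ (the normal space is one-dimensional and $\|n_i\|=1$), so $\|D^j n_i\|$ is independent of the choice of sign. In particular the quantity $\sup_{\xi\in V_i}\|D^j n_i(\xi)\|$ coincides with the local value of the well-defined scalar field $\xi\mapsto\|D^j n(\xi)\|$ that appears in the hypothesis, so it is bounded by $K$ for every $j\in\{2,\dots,k-1\}$.

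The only thing left to ensure is a uniform bound on $\|Dn_i\|$, since \autoref{lemma:Dp-bounded} requires bounds on $D^jn_i$ for $j\in\{2,\dots,k-1\}$ but its proof also uses boundedness of $Dn_i$ (via the shape operator appearing in $J(x)$). This is where I would use that for a hypersurface with $\reach(M)>0$, the shape operator is automatically bounded: by \autoref{theorem:reach} and \autoref{lemma:curvature} we have $\varrho(\xi,v)\ge\reach(M)$ for every $(\xi,v)\in\nu_1(M)$, hence by \autoref{theorem:curvature} all eigenvalues of $L_{\xi,v}$ lie in $[-\reach(M)^{-1},\reach(M)^{-1}]$, so $\|L_{\xi,v}\|\le \reach(M)^{-1}$. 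Since $L_{p(x),v}=-P_{T_{p(x)}(M)}Dn(p(x))$ and $n$ is $C^1$, this gives a uniform bound on the tangential part of $Dn_i$; combined with $\langle n_i,n_i\rangle=1$ (which forces $Dn_i$ to be tangential), we conclude that $\|Dn_i\|$ is uniformly bounded across the whole cover.

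With the family $(V_i,n_i)_{i\in I}$ now satisfying all the hypotheses of \autoref{lemma:Dp-bounded} (possibly with a slightly larger constant $K'$ absorbing the bound from the previous paragraph), the conclusion that $D^jp$ is bounded on $M^\varepsilon$ for every $j\in\{2,\dots,k-1\}$ follows immediately. The only mild subtlety in the argument is the non-orientability remark, which is why it is worth stating explicitly that the local unit normal fields, although not necessarily patching to a global one, can still be chosen individually and their higher-derivative norms are intrinsic.
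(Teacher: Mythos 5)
Your proposal is correct and follows essentially the same route as the paper: cover $M$ by local unit normal fields, observe that for a hypersurface the normal is unique up to sign so the norms $\|D^j n\|$ are intrinsically defined and bounded by the hypothesis, and then apply \autoref{lemma:Dp-bounded}. Your additional paragraph bounding $\|Dn_i\|$ via $\|L_{\xi,v}\|\le \reach(M)^{-1}$ (using \autoref{theorem:reach}, \autoref{lemma:curvature} and \autoref{theorem:curvature}, together with the fact that $Dn$ is tangential for a hypersurface) is a correct observation, but it merely reproduces the reach-based control of the shape operator that is already carried out inside the proof of \autoref{lemma:Dp-bounded}, so the paper treats the corollary as an immediate consequence without it.
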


\section{The converse to \autoref{theorem:dudek-holly}}\label{section:converse}

In their article \citep{dudekholly} from 1994, Dudek and Holly prove that each
point of a $\lip$ submanifold of $\R^d$ (see \autoref{def:lip}) with
dimension different from $d$ possesses a neighborhood in the ambient space
$\R^d$ which has the $\unpp$. This was the assertion of Theorem
\ref{theorem:dudek-holly}, or \cite[Theorem 3.8]{dudekholly} in the original
paper. In \autoref{theorem:pr-lip} in this section we show the converse to
the theorem, starting from topological submanifolds: 
if a topological submanifold $M$ of the $\R^d$ with dimension $m\neq d$ is such that each point $\xi$ of $M$ has an $\R^d$-neighborhood $U(\xi)$ such that $U(\xi)\subseteq \unpp(M)$, then $M$ is $\lip$. 

\autoref{theorem:pr-lip} will be formulated and proven after two essential lemmas below. The proof's core is \autoref{lemma:wurscht}, which allows the construction of normal and tangent spaces to $M$ by merely using the property that each point of $M$ has a neighborhood admitting unique projections onto $M$. The proof of \autoref{lemma:wurscht} relies on an iterative application of the Borsuk-Ulam theorem. We shall also use the following lemma.

\begin{lemma}\label{lemma:pm1-convex}
Let $M$ be a subset of $\R^d$ and $U\subseteq \unpp(M)$.
If $U$ is convex, then for every $\xi \in M$ the set $U\cap p^{-1}(\xi)$ is convex.
\end{lemma}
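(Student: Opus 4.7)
The plan is to exploit the fact that, while the norm itself is convex, the \emph{difference} of two squared distances from the same point is an \emph{affine} function of that point, so inequalities of the form ``distance to $\xi$ is at most distance to $\eta$'' are preserved under convex combinations.

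First, fix $\xi \in M$ and pick $x_1, x_2 \in U \cap p^{-1}(\xi)$ together with $\lambda \in [0,1]$, and set $x_\lambda := (1-\lambda)x_1 + \lambda x_2$. Since $U$ is convex, $x_\lambda \in U \subseteq \unpp(M)$, so $p(x_\lambda)$ is well-defined; the task reduces to showing $p(x_\lambda) = \xi$.

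The key step is to show that $\xi$ is a nearest point in $M$ to $x_\lambda$. Let $\eta \in M$ be arbitrary and consider
\[
g_\eta(y) := \|y-\eta\|^2 - \|y-\xi\|^2 = 2\langle y,\, \xi-\eta\rangle + \|\eta\|^2 - \|\xi\|^2,
\]
which is an affine function of $y$. Since $p(x_i) = \xi$ for $i=1,2$, we have $g_\eta(x_i) \geq 0$. Affineness then yields $g_\eta(x_\lambda) \geq 0$, that is, $\|x_\lambda - \xi\| \leq \|x_\lambda - \eta\|$. As $\eta \in M$ was arbitrary, $\xi$ is a nearest point of $M$ to $x_\lambda$. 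Finally, because $x_\lambda \in U \subseteq \unpp(M)$, the nearest point is unique, forcing $p(x_\lambda) = \xi$ and hence $x_\lambda \in U \cap p^{-1}(\xi)$.

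There is no real obstacle here; the only thing to be careful about is not to try to push the inequality through convexity of the norm itself (which gives the wrong direction), but rather through affineness of the squared-distance difference $g_\eta$. Uniqueness of the projection is then what upgrades ``$\xi$ is \emph{a} nearest point'' to ``$p(x_\lambda) = \xi$.''
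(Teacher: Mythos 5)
Your proof is correct. It does, however, take a different route from the paper's. The paper argues geometrically: for $x_3\in{]}x_1,x_2{[}$ it uses the ball inclusion $B_{\|x_3-\xi\|}(x_3)\subseteq B_{\|x_1-\xi\|}(x_1)\cup B_{\|x_2-\xi\|}(x_2)$, so that the open ball around $x_3$ misses $M$ and the closed ball meets $M$ exactly in $\{\xi\}$; this yields directly that $\xi$ is the \emph{unique} nearest point to $x_3$, without ever invoking $x_3\in\unpp(M)$. You instead observe that for each $\eta\in M$ the function $y\mapsto\|y-\eta\|^2-\|y-\xi\|^2$ is affine, so the condition ``at least as close to $\xi$ as to $\eta$'' cuts out a closed half-space containing $x_1$ and $x_2$, hence also $x_\lambda$; this gives only that $\xi$ is \emph{a} nearest point to $x_\lambda$, and you then use $x_\lambda\in U\subseteq\unpp(M)$ (available since $U$ is convex) to upgrade to $p(x_\lambda)=\xi$. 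Both arguments are complete; yours is more algebraic and avoids the ball-inclusion verification, while the paper's version establishes uniqueness of the nearest point intrinsically, which is the slightly stronger geometric statement it reuses in the proof of Lemma \ref{lemma:wurscht}.
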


\begin{proof}
Let $x_1,x_2\in U$ with $p(x_1)=p(x_2)=\xi$, and let $x_3\in {]x_1,x_2[}$.
Then it is not hard to check that $B_{\|x_3-\xi\|}(x_3)\subseteq  B_{\|x_1-\xi\|}(x_1)\cup B_{\|x_2-\xi\|}(x_2)$.
Since $\big(B_{\|x_1-\xi\|}(x_1)\cup B_{\|x_2-\xi\|}(x_2)\big)\cap M=\emptyset$,
$\big(\bar B_{\|x_1-\xi\|}(x_1)\cup \bar B_{\|x_2-\xi\|}(x_2)\big)\cap M=\{\xi\}$ we have 
$B_{\|x_3-\xi\|}(x_3)\cap M=\emptyset$ and $\bar B_{\|x_3-\xi\|}(x_3)\cap M\subseteq\{\xi\}$. On the other hand clearly 
$\xi\in \bar B_{\|x_3-\xi\|}(x_3)\cap M$, and therefore $\{\xi\}= \bar B_{\|x_3-\xi\|}(x_3)\cap M$.
\end{proof}

We remind the reader that for a set $A\subseteq \R^d$
and $r\in (0,\infty)$  we denote $A^r=\{x\in \R^d\colon d(x,A)<r\}$.
For example, if $H$ is a 2-dimensional subspace of the $\R^3$ and
$S=\bar B_1(0)\setminus B_1(0)$, $0<r_1\le r_2$,
then 
$A=\big((r_2 S)\cap H\big)^{r_1}$ is the interior of a filled torus, 
the case $r_1=r_2$ giving a ``horn torus''. 
Note that $A\cap H^\perp=\emptyset$, even in the latter case.

\begin{lemma}\label{lemma:wurscht}
Let $M$ be an $m$-dimensional topological submanifold of $\R^d$ with
the property that for every $\eta\in M$ there exists 
$U\subseteq \unpp(M)$ with $U$ open and $\eta\in U$.
Then for every $\eta\in M$ there exists $r\in (0,\infty)$ such that
for every $\xi\in M\cap B_r(\eta)$ there exists 
a $(d-m)$-dimensional subspace $N_\xi$  of
$\R^d$ with 
\begin{equation}\label{eq:wurscht}
\Big(\xi+\big((r S)\cap N_\xi\big)\Big)^r\cap M=\emptyset 
\quad\text{ and }\quad\overline{\Big(\xi+\big((r S)\cap N_\xi\big)\Big)^r}\cap M=\{\xi\}\,,
\end{equation}
where $S$ denotes the $(d-1)$-dimensional unit sphere.
\end{lemma}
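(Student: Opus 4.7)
The plan is as follows. Fix $\eta \in M$. By hypothesis, choose $R > 0$ with $U := B_R(\eta) \subseteq \unpp(M)$, and set $\rho := R/3$, $r := \rho/\sqrt{d-m}$. For $\xi \in M \cap B_r(\eta)$ and any unit vector $v \in S$, we have $\xi + \rho v \in U$, so the map $q_\xi : S \to M$, $q_\xi(v) := p(\xi + \rho v)$, is continuous by \autoref{lemma:continuous}. The goal is to construct a $(d-m)$-dimensional subspace $N_\xi$ such that $q_\xi(v) = \xi$ for every unit $v \in N_\xi$; the two identities in \eqref{eq:wurscht} then follow, since for every $w \in rS \cap N_\xi$ the closed ball $\bar B_r(\xi + w)$ is disjoint from $M$ except at the unique nearest point $\xi$.

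The pivotal observation is this: whenever $v \in S$ satisfies $q_\xi(v) = q_\xi(-v) =: \zeta$, necessarily $\zeta = \xi$. Indeed, both $\xi \pm \rho v$ lie in the convex set $U \cap p^{-1}(\zeta)$ (convex by \autoref{lemma:pm1-convex}), so their midpoint $\xi \in p^{-1}(\zeta)$; but $\xi \in M$ gives $p(\xi) = \xi$, forcing $\zeta = \xi$.

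We construct orthonormal $v_1, \ldots, v_{d-m}$ iteratively using Borsuk-Ulam. Fix a homeomorphism $h : M \cap W \to \R^m$ of a neighborhood of $\eta$ in $M$; after possibly shrinking $r$, continuity of $p$ together with compactness of $S$ ensure $q_\xi(S) \subseteq M \cap W$ for all $\xi \in M \cap B_r(\eta)$, so $h \circ q_\xi : S \to \R^m$ is continuous. At the $k$-th step ($1 \le k \le d-m$), having found $v_1, \ldots, v_{k-1}$, set $E_{k-1} := \linspan\{v_1, \ldots, v_{k-1}\}$ and $S_k := E_{k-1}^\perp \cap S$, a sphere of dimension $d - k \ge m$. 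Applying Borsuk-Ulam to $h \circ q_\xi|_{S_k} : S^{d-k} \to \R^m$ produces $v_k \in S_k$ with $q_\xi(v_k) = q_\xi(-v_k)$, which by the pivotal observation equals $\xi$; thus $\xi \pm \rho v_k \in p^{-1}(\xi) \cap U$.

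Finally, set $N_\xi := \linspan\{v_1, \ldots, v_{d-m}\}$. By \autoref{lemma:pm1-convex}, the convex set $p^{-1}(\xi) \cap U$ contains the cross-polytope $\conv\{\xi \pm \rho v_i : 1 \le i \le d-m\}$, whose inscribed Euclidean ball in $\xi + N_\xi$ has radius $\rho/\sqrt{d-m} = r$; hence $p(\xi + v) = \xi$ for every $v \in N_\xi$ with $\|v\| \le r$, yielding \eqref{eq:wurscht}. The main obstacle is ensuring the choice of $r$ and the chart $h$ can be made uniform in $\xi$ so that the iteration succeeds throughout $M \cap B_r(\eta)$, and tracking the dimension count carefully so that $d - k \ge m$ at every step, both of which rest on the continuity of $p$ and compactness of $S$.
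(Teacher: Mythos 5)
Your proposal is correct and follows essentially the same route as the paper's proof: iterated applications of the Borsuk--Ulam theorem to $v\mapsto p(\xi+\rho v)$ composed with a chart, the antipodal coincidence forced to equal $\xi$, and \autoref{lemma:pm1-convex} used to fill in the cross-polytope spanned by the resulting orthonormal normal vectors. Your two local variations --- deducing $\zeta=\xi$ from convexity of $U\cap p^{-1}(\zeta)$ via the midpoint rather than the paper's triangle-inequality argument, and taking the final $r$ to be the inscribed radius $\rho/\sqrt{d-m}$ of the cross-polytope so that the paper's appeal to \autoref{lemma:extend-segment} is not needed --- are both valid (just note that the uniform containment $p(\xi+\rho v)\in W$ is obtained by shrinking $\rho$, not merely $r$, e.g.\ by choosing $R$ with $B_{3R}(\eta)$ inside the chart domain, exactly as the paper does).
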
 

\begin{proof}
There exist open sets $V,W\subseteq \R^d$ with $0\in V$, $\eta\in W$ and a 
continuous map $\Psi\colon V\to W$ with $\Psi(0)=\eta$ and 
$M\cap W=\{\Psi(y)\colon y\in V,\,y_{m+1}=\dots=y_d=0\}$. 
There exists $r\in (0,\infty)$ such that $B_{3r}(\eta)\subseteq U\cap W$. 
Let $\xi\in M\cap B_r(\eta)$.
Denote by $\pi_{d-1}\colon \R^d\to \R^{d-1}$ the projection
defined by $\pi_{d-1}(y_1,\dots,y_d):=(y_1,\dots,y_{d-1})$.
By \autoref{lemma:continuous}, and since $\xi+r S\subseteq B_{3r}(\eta)\subseteq U\subseteq \unpp(M)$,
$f_1\colon S \to \R^{d-1}$, $f_1(v):=\pi_{d-1}\big(\Psi^{-1}(p(\xi+rv))\big)$ is continuous. By the Borsuk-Ulam
theorem there exists $n_d\in S$ with $f_1(n_d)=f_1(-n_d)$, yielding 
$p(\xi+rn_d)=p(\xi-rn_d)=:\zeta$. Obviously, $\|\xi+rn_d-\zeta\|\le r$
and $\|\xi-rn_d-\zeta\|\le r$ and  $\|\xi+rn_d-(\xi-rn_d)\|=2r$. Thus,
by the triangle inequality $\|\xi+rn_d-\zeta\|= r$
and $\|\xi-rn_d-\zeta\|= r$ and therefore $\zeta=\xi$. Let $N^{(1)}$ be the hyperplane
$\{v\in \R^d\colon \langle v,n_d\rangle=0\}=n_d^\perp$.

Now let $\pi_{d-2}\colon \R^d\to \R^{d-2}$ be the projection
defined by $\pi_{d-2}(y_1,\dots,y_d):=(y_1,\dots,y_{d-2})$, and let
$f_2\colon S\cap N^{(1)} \to \R^{d-2}$, $f_2(v):=\pi_{d-2}\big(\Psi^{-1}(p(\xi+rv))\big)$. Applying the Borsuk-Ulam theorem again yields 
$n_{d-1}\in S\cap N^{(1)}$ with 
$p(\xi+rn_{d-1})=p(\xi-rn_{d-1})=\xi$, as before.
Let $N^{(2)}$ be the $d-2$ dimensional space
$N^{(2)}=\{v\in \R^d\colon  \langle v,n_{d-1}\rangle=0\text{ and }\langle v,n_d\rangle=0\}=n_{d-1}^\perp\cap n_d^\perp$.

By iterating this procedure 
 we get $d-m$ orthonormal vectors 
$n_{m+1},\dots,n_d$ for which $p(\xi+rn_j)=p(\xi-rn_j)=\xi$, $j=m+1,\dots,d$,
and $d-m$ subspaces $N_\xi:=N^{(d-m)}\subseteq \dots \subseteq N^{(1)}$.
Note that $N_\xi$ is the linear space spanned by $n_{m+1},\dots,n_d$. 
 
Let 
\[
K:=r\conv\{n_{m+1},\dots,n_d,-n_{m+1},\dots,-n_d\}\,.
\]
By construction of the $n_j$'s and by 
\autoref{lemma:pm1-convex}, we have  $p(\xi+K)=\{\xi\}$.
Using \autoref{lemma:extend-segment} we get 
$p\Big(\xi+\big((rS)\cap N_\xi\big)\Big)=\{\xi\}$. 
From this the assertion follows.
\end{proof}

If $t\in \R^d\setminus \{0\}$ is a vector and $T\subseteq \R^d$ is a 
non-trivial linear subspace, then
$\ang(t,T):=\min\{\ang(t,t_2)\colon t_2\in T\setminus\{0\}\}$.

If  $T_1,T_2\subseteq \R^d$  are two
non-trivial linear subspaces, we define 
\[\ang(T_1,T_2):=\max\big\{\min\{\ang(t_1,t_2)\colon t_2\in T_2\setminus\{0\}\}\colon t_1\in T_1\setminus\{0\}\big\}\,.\]
Note that $d_H(T_1,T_2):=2\arcsin(\ang(T_1,T_2)/2)$ for $T_1,T_2\in G(m,\R^d)$.

\setcounter{maintheorem}{1}
\begin{maintheorem}\label{theorem:pr-lip}
\theoremprlip
\end{maintheorem}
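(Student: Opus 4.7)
The plan is to convert the output of \autoref{lemma:wurscht} into a uniform two-sided ball condition and then read off from it that $M$ is $C^1$ with locally Lipschitz tangent spaces. Fix $\eta\in M$ and use \autoref{lemma:wurscht} to obtain $r>0$ and, for every $\xi\in M\cap B_r(\eta)$, a $(d-m)$-dimensional subspace $N_\xi$ such that
\[
\bar B_r(\xi+rn)\cap M=\{\xi\}\qquad\text{for every }n\in N_\xi\cap S.
\]
Set $T_\xi:=N_\xi^\perp$; the aim is to prove that this is the tangent space and that $\xi\mapsto T_\xi$ is locally Lipschitz.

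First I would derive the crucial quadratic estimate. For $\zeta\in M$ and unit $n\in N_\xi$ the inclusion $\zeta\notin B_r(\xi+rn)$ rewritten as $\|\zeta-\xi-rn\|^2\ge r^2$ gives $\langle\zeta-\xi,n\rangle\le\|\zeta-\xi\|^2/(2r)$; applying this also to $-n$ yields
\[
\|P_{N_\xi}(\zeta-\xi)\|\le\frac{\|\zeta-\xi\|^2}{2r},\qquad\text{i.e.}\qquad \sin\ang(\zeta-\xi,T_\xi)\le\frac{\|\zeta-\xi\|}{2r}.
\]
In particular, for $\rho<r$ one has $\|P_{T_\xi}(\zeta-\xi)\|\ge(\sqrt3/2)\|\zeta-\xi\|$ on $M\cap B_\rho(\xi)$, so $P_{T_\xi}|_M$ separates $\xi$ from nearby points in $M$. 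Combining this with the symmetric estimate applied at a second preimage $\zeta'$ (which forces the $T_{\zeta'}$-component of any admissible $\zeta''-\zeta'\in N_\xi$ to dominate, contradicting the small-angle bound), I obtain local injectivity of $P_{T_\xi}|_M$ near $\xi$. Since $M$ is an $m$-dimensional topological manifold, Brouwer's invariance of domain then makes $P_{T_\xi}|_M$ a local homeomorphism onto a neighborhood of $0$ in $T_\xi$, so $M$ is locally the graph of a continuous $\Phi\colon W\to N_\xi$ satisfying $\|\Phi(t)\|\le\|t+\Phi(t)\|^2/(2r)=O(\|t\|^2)$. This forces $\Phi$ to be Fr\'echet-differentiable at $0$ with $D\Phi(0)=0$, identifying $T_\xi$ with the tangent space of $M$ at $\xi$.

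For the Lipschitz property I would exploit that the quadratic estimate is valid \emph{uniformly} for every base point $\xi'\in M\cap B_r(\eta)$ with the \emph{same} $r$. Applied at a nearby $\zeta=\xi+t_0+\Phi(t_0)\in M$, the estimate $\|P_{N_\zeta}(\zeta'-\zeta)\|\le\|\zeta'-\zeta\|^2/(2r)$, once rewritten in the graph coordinates over $T_\xi$, becomes
\[
\|\Phi(t)-\Phi(t_0)-L_{t_0}(t-t_0)\|\le C\|t-t_0\|^2,
\]
where $L_{t_0}\colon T_\xi\to N_\xi$ is the unique linear map whose graph is $T_\zeta$. A standard argument for graphs with a uniform quadratic remainder shows $\Phi$ is $C^{1,1}$, with $D\Phi$ locally Lipschitz of constant $O(1/r)$. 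Since $T_\zeta$ is the graph of $D\Phi(t_0)$ in the orthogonal decomposition $\R^d=T_\xi\oplus N_\xi$, this translates, through the Lipschitz correspondence between graphs of linear maps and subspaces in the Grassmannian, into $d_H(T_\xi,T_\zeta)\le L\|\xi-\zeta\|$, proving that $M$ is $\lip$.

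The main obstacle will be Step~2's local injectivity of $P_{T_\xi}|_M$: the naive estimate $\|\zeta_1-\zeta_2\|\le\rho^2/r$ obtained by bounding $P_{N_\xi}(\zeta_2-\zeta_1)$ directly does not collapse to $\zeta_1=\zeta_2$, so one must invoke the quadratic estimate at $\zeta_1$ itself, and carefully arrange the relative sizes to rule out the existence of two preimages. The second delicate point is the identification, used in Step~3, of the a priori freely chosen subspace $N_\zeta$ from \autoref{lemma:wurscht} with the graph-normal space produced by $\Phi$ at $t_0$; this coincidence is what ensures that the quadratic control at $\zeta$ really controls the derivative of $\Phi$ and not some other candidate normal direction.
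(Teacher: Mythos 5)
Your proposal follows essentially the same route as the paper's proof: \autoref{lemma:wurscht} gives the two-sided tangent-ball condition, from which you extract the quadratic estimate $\|P_{N_\xi}(\zeta-\xi)\|\le\|\zeta-\xi\|^2/(2r)$ (the paper phrases the same consequence as angle bounds), prove local injectivity of the projection onto $T_\xi:=N_\xi^\perp$, invoke invariance of domain to obtain a graph representation with a uniform quadratic remainder, deduce differentiability and then Lipschitz continuity of the derivative from that uniformity (the ``standard argument'' you cite is exactly the paper's Step~3, carried out there via a four-point cancellation identity), and finally translate into the Grassmannian via the correspondence between subspaces and graphs of linear maps. The two delicate points you flag --- injectivity of $P_{T_\xi}|_M$ and the identification of $N_\zeta$ with the graph-normal space at $t_0$ --- are precisely the points the paper's Steps~1 and~2 handle, and in the same way you indicate.
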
 

\begin{proof}
\noindent {\tt Step 1:} We show that 
$M$ is locally the graph of a function $\Phi$ over an $m$-dimensional subspace of $\R^d$.

Let $\eta\in M$. By \autoref{lemma:wurscht} there exists 
$r\in (0,\infty)$ such that for every $\xi\in M\cap B_r(\eta)$ there exists 
a $(d-m)$-dimensional subspace $N_\xi$  of
$\R^d$ and such that 
\eqref{eq:wurscht} is satisfied. 
Moreover, $r$ can by chosen so that $M_1:=M\cap B_r(\eta)$ is homeomorphic
to an open subset of $\R^m$.
For every $\xi\in M$ with $\|\xi-\eta\|<r$ write $T_\xi:=N_\xi^\perp$, where $N_\xi$ is the linear space constructed
in \autoref{lemma:wurscht}.
Consider the map $f:M_1\to T_\eta$, $f(\xi)=P_{T_\eta}(\xi-\eta)$. 
The map $f$ is injective: suppose to the contrary that 
there exist $\xi,\zeta\in M_1$ with
$P_{T_\eta}(\xi-\eta)=P_{T_\eta}(\zeta-\eta)$, so $\ang(\zeta-\xi,T_\eta)=\frac{\pi}{2}$. By \eqref{eq:wurscht} and  $\|\xi-\eta\|<r$
we have $\ang(\xi-\eta,T_\eta)<\frac{\pi}{6}$, and similarly
$\ang(\eta-\xi,T_\xi)<\frac{\pi}{6}$, so that 
$\ang(T_\xi,T_\eta)<\frac{\pi}{3}$.  
We also have 
$\ang(\zeta-\xi,T_\xi)<\frac{\pi}{6}$.  
But $\ang(\zeta-\xi,T_\eta)\le\ang(\zeta-\xi,T_\xi)+\ang(T_\xi,T_\eta)
<\frac{\pi}{6}+\frac{\pi}{3}<\frac{\pi}{2}$, a contradiction.

The set $V:=\{P_{T_\eta}(\xi-\eta)\colon \xi\in M_1\}$ is open in $T_\eta$
by Brouwer's invariance of domain theorem.
Thus the map $\Phi\colon V \to N_\eta$, $\Phi(t)=f^{-1}(t)-\eta-t$
provides us with a parametrization of $M_1$ via $t\mapsto \eta+t+\Phi(t)$. 
We see that $M_1$ is the graph of a function.

\noindent {\tt Step 2:} We show that $\Phi$ is differentiable.

Let $t\in V$. Then $\xi:=\eta+t+\Phi(t)\in M_1$ and $\ang(T_\xi,T_\eta)<\frac{\pi}{3}$ such that $T_\xi\cap (T_\eta^\perp)=\{0\}$. Thus, there exists a linear mapping 
$A_t:T_\eta\to T_\eta^\perp$ with $T_\xi=\{h+A_t h\colon h\in T_\eta\}$.
Since $\ang(T_\eta,T_\xi)<\frac{\pi}{3}$ and because of \eqref{eq:wurscht}
it holds 
$-\frac{8}{r}\|h\|^2\le \|\Phi(t+h)-\Phi(t)-A_t h\|\le \frac{8}{r}\|h\|^2$ for all  $h$ with sufficiently small norm. 
Thus, $\Phi$ is differentiable in $t$, with derivative $A_t$.
In particular, $T_\xi(M)=T_\xi$ for every $\xi\in M$.

\noindent {\tt Step 3:} We show that the derivative of $\Phi$ is Lipschitz
so that, in particular, $M$ is $C^1$.

So far we have succeeded in showing that for all $t\in V$ and all 
$h\in T_\eta$ satisfying $t+h\in V$ it holds 
\[
\Phi(t+h)=\Phi(t)+A_t h  +\kappa(t,h) 
\]
for some remainder function $\kappa$ satisfying 
$\|\kappa(t,h)\|<\frac{8}{r}\|h\|^2$.

Now fix $t\in V$ and let $a$ be such that $B_a(t)\cap T_\eta\subseteq V$.
Let $h,k\in T_\eta$ with $0<\|h\|=\|k\|<\frac{a}{2}$. Then 
\begin{align*}
\Phi(t+k)&=\Phi(t+h)+ A_{t+h}(k-h) +\kappa(t+h,k-h)\\
\Phi(t-k)&=\Phi(t+h)+A_{t+h}(-k-h) +\kappa(t+h,-k-h)\\
\Phi(t+k)&=\Phi(t)+ A_tk +\kappa(t,k)\\
\Phi(t-k)&=\Phi(t)+ A_t(-k) +\kappa(t,-k)\,.
\end{align*}
We add the first and fourth equation and subtract the second and third 
to get
\[
0=2 (A_{t+h}-A_t)k+\kappa(t+h,k-h)-\kappa(t+h,-k-h)-\kappa(t,k)+\kappa(t,-k)
\]
and thus
\begin{align*}
2 \|(A_{t+h}-A_t)k\|
&\le \frac{8}{r}(\|k-h\|^2+\|-k-h\|^2+\|k\|^2+\|-k\|^2)\\
&= \frac{8}{r}\big(2\|h\|^2+4\|k\|^2\big)=\frac{48}{r}\|h\|^2\,,
\end{align*}
since we assumed $\|k\|=\|h\|$. Therefore,
\begin{align*}
\|k\|^{-1}\|(A_{t+h}-A_t)k\|
&\le \frac{24}{r}\|k\|^{-1}\|h\|^2=\frac{24}{r}\|h\|\,,
\end{align*}
and, because $k$ was arbitrary with $\|k\|=\|h\|$, it follows that  
$\|A_{t+h}-A_t\|\le \frac{24}{r}\|h\|$, which means that the mapping  $t\mapsto A_t$ is Lipschitz.\\

\noindent {\tt Step 4:} We show that $M$ is $\lip$. 
For this it suffices to show that $\xi\mapsto T_\xi$ is Lipschitz on 
$M\cap B_\frac{r}{4}(\eta)$.
Let now $\xi,\zeta\in M\cap B_\frac{r}{4}(\eta)$, thus $\zeta\in M\cap B_{\frac{r}{2}}(\xi)$. 
Note that since $\xi \in M\cap B_\frac{r}{4}(\eta)$, clearly $M\cap B_{\frac{r}{2}}(\xi)\subseteq M_1$ and therefore {\tt Step 1} -- {\tt Step 3} can be performed for $\xi, M\cap B_{\frac{r}{2}}(\xi)$ in position of $\eta, M_1$, yielding also the same Lipschitz constant in {\tt Step 3}. 
 

In particular, as done for $\eta$ in {\tt Step 1}, $M\cap B_\frac{r}{2}(\xi)$ can 
be represented as a graph over $T_\xi$: there exists $\bar\Phi\colon T_\xi\cap B_\frac{r}{2}(0)\to N_\xi$ such that 
$M\cap \{\xi+(T_\xi\cap B_\frac{r}{2}(0))+(N_\xi\cap B_r(0))\} =\{\xi+t+\bar\Phi(t)\colon t\in T_\xi\cap B_\frac{r}{2}(0)\}$ and $\bar{\Phi}(0)=0$.

Hence, $\zeta=\xi+t+\bar\Phi(t)$ for some $t\in T_\xi$,
and $T_\zeta=\{t_1+D\bar\Phi(t)t_1\colon t_1\in T_\xi\}$.
Now let $s\in T_\xi$ with $\|s\|=1$ and such that 
$\ang(T_\xi,T_\zeta)=\arctan(\|D\bar\Phi(t)s\|)$ and hence
$\ang(T_\xi,T_\zeta)\le\arctan(\|D\bar\Phi(t)\|)$.
We use the estimate $ 2 \arcsin\left(\tfrac{1}{2}\arctan(x)\right)\leq x$ 
for $x\in [0,\infty)$ (which can be shown by proving that $g$ defined by 
$g(x):=x-2 \arcsin\left(\tfrac{1}{2}\arctan(x)\right)$ satisfies $g(0)=0$ 
and $g'>0$), the Lipschitz continuity of $\bar \Phi$ from {\tt Step 3}
and Pythagoras' theorem to compute the estimate
\begin{align*}
d_H(T_\xi,T_\zeta)&=2 \arcsin\left(\tfrac{1}{2}\arctan(\|D\bar\Phi(t)\|)\right)
\le \|D\bar\Phi(t)\|\\
&\le \tfrac{24}{r}\|t\|
\le \tfrac{24}{r}\|t+\bar \Phi(t)\|
= \tfrac{24}{r}\|\xi-\zeta\|\,.
\end{align*}
\end{proof}

\begin{remark}\label{remark:blaschke} 
\citet[Theorem 1]{walter} generalizes Blaschke’s
Rolling Theorem for the boundary of a compact and path-connected 
subset $P\subseteq \R^d$.
In particular, the theorem there states that there exists $r_0>0$ such that a ball of radius $r$ rolls freely inside
$P$ and $\overline{P^c}$ for all $0\le r\le r_0$ iff $\partial P$ is a 
$\lip$ hypersurface.

For hypersurfaces this free-rolling condition is equivalent to $\reach(\partial P)\ge r_0$, using the notation of the present article. 
However, the methods used there cannot be used to prove \autoref{theorem:pr-lip}, and also there is no obvious way to generalize the whole setup in  \cite{walter} to codimensions other than 1.
\end{remark}

\section{The topological skeleton}\label{sec:skeleton}

Here we highlight the relation between $\cE(M)$ and the
topological skeleton (a.k.a.~medial axis) of $M^c$. 

\begin{definition}
Let $A\subseteq \R^d$ be a subset. 
\begin{enumerate}
\item A ball $B_r(x)$ with $r\in (0,\infty)$ is called {\em maximal in $A$}, iff
\begin{enumerate}[(i)]
\item $B_r(x)\subseteq A$
\item for all $x'\in \R^d, r'\in (0,\infty)\colon
B_r(x)\subseteq B_{r'}(x')\subseteq A$, then $r'=r,x'=x$.
\end{enumerate}
\item
Define the {\em topological skeleton} by
\[
\cS(A):=\{x\in \R^d\colon \exists r\in (0,\infty) \colon B_r(x) \text{ is maximal in }A\}
\]
\end{enumerate}
\end{definition}

\begin{remark}\label{remark:maximal-ball}Note that a ball $B_r(x)$ is
maximal in $A$ iff $B_r(x)$ is maximal in $A^\circ$. Therefore,
$\cS(A)=\cS(A^\circ)$.

Note further that if $x$ has at least 2 nearest points on $A^c$, then 
$x\in \cS(A)$.
\end{remark}

The following is an adaptation of the well-known medial axis transform, which 
allows reconstruction of an open subset of $\R^d$ from its topological skeleton.
In our version the complement of a closed set (for example a closed manifold)
is recovered from its skeleton.

\begin{proposition}[Medial axis transform, recovery from skeleton]\label{theorem:medial-axis}
\theoremmedialaxis
\end{proposition}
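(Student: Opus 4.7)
The plan is to prove the set equality by establishing both inclusions. The direction $\supseteq$ is immediate: each $H^c$ with $H\in\cH(M)$ is disjoint from $M$ by definition, and each open ball $B_r(x)$ with $r=d(x,M)$ is disjoint from $M$ because $M$ is closed, so its nearest points lie on the boundary of $\bar B_r(x)$ and not in the open ball. For $\subseteq$, fix $y\in M^c$, assuming $M\ne\emptyset$ (if $M=\emptyset$, then $\cH(M)$ consists of all closed half-spaces and the first union already equals $\R^d$). I split into two cases according to whether $y\in\overline{\conv(M)}$. By the Hahn--Banach separation theorem, $\overline{\conv(M)}=\bigcap_{H\in\cH(M)}H$, so $y\notin\overline{\conv(M)}$ means that some $H\in\cH(M)$ excludes $y$, which places $y$ in the first part of the union.

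In the remaining case $y\in\overline{\conv(M)}$, the plan is to produce a maximal ball in $M^c$ containing $y$. Such a maximal ball $B_r(x)$ automatically satisfies $r=d(x,M)$ (otherwise $B_{d(x,M)}(x)$ would properly contain it in $M^c$) and has $x\in\cS(M^c)$ by definition. Setting $\delta:=d(y,M)>0$, I plan to apply Zorn's Lemma to
\[
\mathcal{F}_y:=\{B_r(x): B_\delta(y)\subseteq B_r(x)\subseteq M^c\}
\]
ordered by inclusion; any maximal element is then a maximal ball in $M^c$, because any strictly larger ball in $M^c$ still lies in $\mathcal{F}_y$ (it contains $B_\delta(y)$).

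The nontrivial step is verifying the chain hypothesis. For a chain $\mathcal{C}\subseteq\mathcal{F}_y$ with $R:=\sup\{r_B:B\in\mathcal{C}\}<\infty$, a sequence $B_{r_n}(x_n)\in\mathcal{C}$ with $r_n\to R$ has Cauchy centers (the nesting forces $\|x_n-x_m\|\le|r_n-r_m|$), and one checks that the limit $B_R(x^*)$ lies in $\mathcal{F}_y$ and dominates $\mathcal{C}$. The main obstacle will be excluding $R=\infty$ using $y\in\overline{\conv(M)}$. Given $B_{r_n}(x_n)\in\mathcal{C}$ with $r_n\to\infty$ and $\|x_n-y\|\le r_n-\delta$, expanding $\|\xi-x_n\|^2\ge r_n^2$ for $\xi\in M$ and dividing by $\|x_n-y\|$ yields
\[
\left\langle\xi-y,\,\tfrac{x_n-y}{\|x_n-y\|}\right\rangle\le\frac{\|\xi-y\|^2+\|x_n-y\|^2-r_n^2}{2\|x_n-y\|}.
\]
After extracting subsequences so that $v_n:=(x_n-y)/\|x_n-y\|\to v$ and $t_n:=r_n-\|x_n-y\|\to t\in[\delta,\infty]$, the case $t=\infty$ is ruled out because the right-hand side would tend to $-\infty$ while the left-hand side stays bounded by $\|\xi-y\|$, contradicting $M\ne\emptyset$; hence $t$ is finite, $\|x_n-y\|\to\infty$, and passing to the limit gives $\langle\xi-y,v\rangle\le-t\le-\delta$ for every $\xi\in M$. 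Thus the closed half-space $\{z:\langle z-y,v\rangle\le-\delta\}$ belongs to $\cH(M)$ yet excludes $y$, contradicting $y\in\overline{\conv(M)}$ and completing the proof.
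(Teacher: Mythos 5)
Your proof is correct, but it reaches the key object — a maximal ball of $M^c$ containing the given point — by a genuinely different route than the paper. The paper first disposes of points with two or more nearest points (these are themselves centers of maximal balls), and for a point $x$ with a unique nearest point $\zeta$ it inflates the ball explicitly along the normal ray, considering $B_{a\|x-\zeta\|}(\zeta+a(x-\zeta))$ for $a\ge 1$: the supremum $\alpha$ of admissible $a$ either is finite, in which case the limiting ball is maximal and its center is the desired skeleton point, or is infinite, in which case $x$ lies outside the closed half-space $\{z\colon\langle z-\zeta,x-\zeta\rangle\le 0\}\in\cH(M)$. You instead run Zorn's Lemma over \emph{all} balls of $M^c$ containing $B_{d(y,M)}(y)$, which uniformizes the two cases (no distinction between unique and multiple nearest points is needed) at the cost of a choice principle and a slightly heavier chain/compactness verification; your exclusion of unbounded radii via the limiting direction $v$ plays exactly the role of the paper's $\alpha=\infty$ case, just with the separating hyperplane extracted by a limit rather than read off the normal ray. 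Two small remarks: your case $t=\infty$ is actually vacuous, since $B_{r_n}(x_n)\subseteq M^c$ forces $d(y,M)\ge r_n-\|x_n-y\|=t_n$, so in fact $t_n=\delta$ for every $n$ and $\|x_n-y\|\to\infty$ automatically; and your use of $\overline{\conv(M)}$ with Hahn--Banach is, if anything, slightly more careful than the paper's identification of $\bigcup_{H\in\cH(M)}H^c$ with the complement of $\conv(M)$, which is only literally correct for the \emph{closed} convex hull. Both arguments are sound; the paper's is more constructive and pins down where on the normal ray the skeleton point sits, while yours is shorter on case analysis.
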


\begin{proof}
Let $\cB=\bigcup \big\{B_r(x)\colon x\in \cS(M^c)\text{ and } r=d(x,M)\big\}$.

The inclusion 
\(
\Big(\bigcup_{H\in  \cH(M)}H^c\Big) \cup \cB \subseteq M^c
\) is obvious.

\medskip

We show 
\(M^c\subseteq \Big(\bigcup_{H\in  \cH(M)}H^c\Big) \cup \cB \).
Note that $\bigcup_{H\in  \cH(M)}H^c$ equals the complement of the convex 
hull $\operatorname{conv}(M)$ of
$M$.
Now let $x\in M^c\setminus \operatorname{conv}(M)^c$. 
Since $M$ is closed, $x$ has nearest points on $M$. 
Consider first the case that $x$ has 2 or more distinct nearest points. Then 
$B_{d(x,M)}(x)$ is maximal in $M^c$.
Thus, $x\in \cS(M^c)$ and therefore $x\in B_{d(x,M)}(x)\subseteq \cB$.

Next, consider the case that $x$ has a unique nearest point $\zeta$ on $M$,
and let 
$\alpha=\sup\{a\in [1,\infty)\colon B_{a\|x-\zeta\|}\big(\zeta+a(x-\zeta)\big)\subseteq  M^c\}$. Since the set over which the supremum is taken contains $1$,
we see that $\alpha\in [1,\infty]$.

If  $\alpha=\infty$ then 
$B_{a\|x-\zeta\|}\big(\zeta+a(x-\zeta)\big)\subseteq  M^c$ for arbitrarily large $a$, so
$x\in H^c$ for the closed half-space 
$H=\{z\in \R^d\colon \langle z-\zeta,x-\zeta\rangle\le 0\}$.  Thus
$x\in \operatorname{conv}(M)^c$, which was excluded.

If $\alpha\in [1,\infty)$ it is easy to show that the ball $B_{\alpha\|x-\zeta\|}\big(\zeta+\alpha(x-\zeta)\big)$
is maximal. So $\zeta+\alpha(x-\zeta)\in \cS(M^c)$ and 
$x\in B_{\|x-\zeta\|}(x)\subseteq B_{\alpha\|x-\zeta\|}\big(\zeta+\alpha(x-\zeta)\big)\subseteq\cB$.
\end{proof}

The next result describes the relation between $\cE(M)$ and the skeleton of $M^c$.

\begin{proposition}\label{theorem:skeleton}
Let $M\subseteq \R^d$. Then 
\[
\cE(M)^c
=\overline{\cS\big( M^c\big)} \cup \overline{\cF(M)}
=\overline{\cS\Big(\big(\overline M\big)^c\Big)} \cup \overline{\cF(M)}\,,
\]
where $\cF(M)$ is the set of points $x\in \R^d$ 
for which there is no nearest point to $x$ in $M$.
\end{proposition}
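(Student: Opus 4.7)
My plan is to decompose the complement of $\unpp(M)$ into a ``non-uniqueness'' part and a ``no nearest point'' part, and identify the former, up to closure, with the topological skeleton $\cS(M^c)$. Write $\cN(M)$ for the set of points with at least two nearest points in $M$, so that $\unpp(M)^c=\cN(M)\cup\cF(M)$. Since $\cE(M)=\unpp(M)^\circ$ by definition, taking complements gives $\cE(M)^c=\overline{\unpp(M)^c}=\overline{\cN(M)}\cup\overline{\cF(M)}$. The first claimed equality will therefore follow from the two inclusions $\cN(M)\subseteq\cS(M^c)$ and $\cS(M^c)\subseteq\cE(M)^c$: the former upgrades $\overline{\cN(M)}$ into $\overline{\cS(M^c)}$, and the latter (using that $\cE(M)^c$ is closed) supplies the reverse containment after taking closures and unioning with $\overline{\cF(M)}$.

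The easy inclusion $\cN(M)\subseteq\cS(M^c)$ is the second assertion of \autoref{remark:maximal-ball}; it boils down to a short triangle-inequality check that any $B_{r'}(x')\supseteq B_r(x)$ lying in $M^c$, with $r=d(x,M)$ and $x$ having two distinct nearest points $\xi_1\neq\xi_2$, must place both $\xi_i$ on the ray from $x'$ through $x$ at common distance $r'$, forcing them to coincide unless $x'=x$ and $r'=r$.

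The harder inclusion $\cS(M^c)\subseteq\cE(M)^c$ I intend to prove by contradiction using \autoref{lemma:extend-segment}. Let $x\in\cS(M^c)$ with $B_r(x)$ maximal in $M^c$. If $x$ has zero or at least two nearest points in $M$, then $x\in\cF(M)\cup\cN(M)\subseteq\cE(M)^c$ and we are done. Otherwise $x$ has a unique nearest point $\xi\in M$, and maximality of $B_r(x)$ forces $r=\|x-\xi\|$ (else $B_{\|x-\xi\|}(x)\subseteq M^c$ would be a strictly larger concentric ball in $M^c$). Suppose towards contradiction that $x\in\cE(M)$, so $x$ has an open neighborhood contained in $\unpp(M)$. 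Then the hypothesis of \autoref{lemma:extend-segment} is met and yields $a>1$ with $z:=\xi+a(x-\xi)\in\cE(M)$ and $p(z)=\xi$, whence $d(z,M)=\|z-\xi\|=ar$ and $B_{ar}(z)\subseteq M^c$. A one-line triangle-inequality, using $\|x-z\|=(a-1)r$, then gives $B_r(x)\subseteq B_{ar}(z)$, and since $a>1$ this is a strictly larger ball in $M^c$, contradicting the maximality of $B_r(x)$.

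Finally, the second equality reduces to $\cS(M^c)=\cS\bigl((\overline M)^c\bigr)$, which follows from the first sentence of \autoref{remark:maximal-ball} (namely $\cS(A)=\cS(A^\circ)$) applied to $A=M^c$, together with the elementary identity $(M^c)^\circ=(\overline M)^c$ (any open set disjoint from $M$ is automatically disjoint from $\overline M$). The main obstacle I expect is the contradiction step in the third paragraph: the conclusion $p(z)=\xi$ of \autoref{lemma:extend-segment} is precisely the geometric ingredient needed to convert the assumption $x\in\cE(M)$ into a strictly larger $M$-avoiding ball, thereby defeating the maximality of $B_r(x)$.
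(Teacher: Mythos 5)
Your proposal is correct and takes essentially the same route as the paper: the forward inclusion is the identity $\cE(M)^c=(\unpp(M)^\circ)^c=\overline{\unpp(M)^c}=\overline{\cN(M)}\cup\overline{\cF(M)}$ combined with $\cN(M)\subseteq\cS(M^c)$ from \autoref{remark:maximal-ball}, and the reverse inclusion is the same contradiction via \autoref{lemma:extend-segment}, producing the strictly larger ball $B_{ar}(\xi+a(x-\xi))\supseteq B_r(x)$ that defeats maximality. The paper phrases the forward direction with an explicit sequence argument rather than the closure identities, but the content is identical, and your handling of the second equality via $\cS(A)=\cS(A^\circ)$ and $(M^c)^\circ=(\overline M)^c$ matches the paper's as well.
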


\begin{proof}
Since $\big(\overline{M})^c=\big(M^c\big)^\circ$, 
$\cS\Big(\big(\overline{M}\big)^c\Big)=\cS(M^c)$, it is enough to prove  the first
equality.

\medskip

\noindent {\tt Step 1:}  
We show $\cE(M)^c\subseteq\overline{\cS(M^c)} \cup \overline{\cF(M)}$. 

Let $x\in \cE(M)^c$. W.l.o.g.~there exists a sequence $(x_n)_{n\in\N}$ converging to
$x$ such that either all $x_n$ have no nearest point on $M$ or  all $x_n$ have
multiple nearest points on $M$.

In the first case, all $x_n$ are in $\cF(M)$ by definition. Therefore $x\in \overline{\cF(M)}$.
In the second case, all $x_n$ have multiple nearest points on $M$. Thus $x_n$ is the center of a maximal ball in $M^c$ and therefore
$x_n$ in $\cS(M^c)$. Hence $x\in \overline{\cS(M^c)}$.

\medskip

\noindent {\tt Step 2:}  
We show $\overline{\cS( M^c)} \cup \overline{\cF(M)}\subseteq\cE(M)^c$. 

First let $x\in \cF(M)$. Then $x\in \cE(M)^c$ by definition of $\cE(M)$.
Let $x\in \cS(M^c)$ and assume $x\notin \cE(M)^c$, i.e., $x\in \cE(M)$.
Therefore, $x$ has a unique nearest point $\zeta$ on $M$.
By \autoref{lemma:extend-segment} there exists $a\in (1,\infty)$ 
such that $\zeta+a (x-\zeta)$ is the center of the ball 
$B_{a\|x-\zeta\|}(\zeta+a (x-\zeta))\subseteq M^c$, which contains the ball 
$B_{\|x-\zeta\|}(x)$. Thus, $x$ is not the center of a maximal ball in $M^c$,
contradicting $x\in \cS(M^c)$.

We have shown that $\cS(M^c) \cup \cF(M)\subseteq\cE(M)^c$.  
Since $\cE(M)^c$ is closed, also 
$\overline{\cS(M^c) \cup \cF(M)}\subseteq\cE(M)^c$.
\end{proof}

\begin{remark}
\begin{enumerate}
\item The skeleton $\cS(M^c)$ 
is not automatically closed: consider as a counterexample 
$M=\{(x,|x|)\in \R^2\colon x\in \R\}$, where 
$\cS(M^c)=\big\{(0,y)\colon y\in (0,\infty)\big\}$.
\item $\cF(M)$
is not automatically closed: consider as a counterexample 
\autoref{ex:lemma_cont}, where $x\notin \cF(M)$ but all points
to the left of $x$ lie in $\cF(M)$.
\item $\cF(M)=\emptyset$ for closed $M$. 
Note further that if $M$ is countable with no accumulation points, then
$\cE(M)^c$ is the union of the boundaries of the Voronoi cells
corresponding to $M$ (see \cite[Subsection 6.2.1]{aurenhammer}).
\end{enumerate}
\end{remark}

For the main purpose of this manuscript, closedness of the skeleton has an
important consequence. 

\setcounter{maintheorem}{3}
\begin{maintheorem}\label{theorem:theta-continuous2}
\varthetacontinuoustwo
\end{maintheorem}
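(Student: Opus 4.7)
Lower semi-continuity of $\vartheta$ is already the first part of \autoref{theorem:theta-continuous}, so it suffices to establish upper semi-continuity. The plan is to mimic the beginning of the proof of \autoref{theorem:theta-continuous} and then use the hypothesis on $\cS(M^c)$ to close the gap that was otherwise filled by the $C^2$ assumption and the theory of centers of curvature. Assume for contradiction that there exist $(\xi,v)\in \nu_1(M)$, $\alpha\in(\vartheta(\xi,v),\infty)$ and a sequence $(\xi_k,v_k)_{k\in\N}$ in $\nu_1(M)$ converging to $(\xi,v)$ with $\vartheta(\xi_k,v_k)\ge\alpha$ for every $k$. Fix $r\in(\vartheta(\xi,v),\alpha)$ and $\alpha'\in(r,\alpha)$, and set $z:=\xi+rv$, $z':=\xi+\alpha' v$, $z'_k:=\xi_k+\alpha' v_k$.

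By \autoref{proposition:thetanu}.\ref{item:thetanu2} each $z'_k$ lies in $\cE(M)$ with $p(z'_k)=\xi_k$, so $B_{\alpha'}(z'_k)\cap M=\emptyset$. Passing to the limit and using that $B_{\alpha'}(z')$ is open one obtains $B_{\alpha'}(z')\cap M=\emptyset$; since no point of $\overline M$ can lie in an open set disjoint from $M$, in fact $B_{\alpha'}(z')\cap\overline M=\emptyset$. Because $z\in B_{\alpha'}(z')$ and $\xi\in\partial B_{\alpha'}(z')$ one also reads off $d(z,M)=r$ with $\xi$ a nearest point of $z$ on $M$.

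The crucial observation is the strict inclusion $B_r(z)\subsetneq B_{\alpha'}(z')\subseteq M^c$, which shows that $B_r(z)$ is not maximal in $M^c$. Since $r=d(z,M)$ is the only admissible radius of a ball in $M^c$ centered at $z$, this forces $z\notin \cS(M^c)$. On the other hand $r>\vartheta(\xi,v)$ implies $z\notin\cE(M)$, so by \autoref{theorem:skeleton} together with the hypothesis that $\cS(M^c)$ is closed one has $z\in\cS(M^c)\cup\overline{\cF(M)}$, and therefore $z\in\overline{\cF(M)}$.

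The main obstacle---and the place where the closedness hypothesis is really exploited---is to rule out this last possibility. Pick a sequence $u_n\to z$ with $u_n\in\cF(M)$. Since $\overline M$ is closed, each $u_n$ has a nearest point $\eta_n^*\in\overline M$, which by the definition of $\cF(M)$ must lie in $\overline M\setminus M$. The sequence $(\eta_n^*)$ is bounded and any of its accumulation points $\eta^*$ satisfies $\eta^*\in\overline M$ and $\|z-\eta^*\|=d(z,M)=r$. Because $M$ is a submanifold it is locally closed in $\R^d$ near $\xi$, so a neighborhood of $\xi$ is disjoint from $\overline M\setminus M$ and hence $\eta^*\neq\xi$. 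Therefore $z$ has two distinct nearest points in $\overline M$, namely $\xi$ and $\eta^*$, and \autoref{remark:maximal-ball} applied to $A=(\overline M)^c$ yields $z\in\cS((\overline M)^c)=\cS(M^c)$, contradicting what was established in the previous paragraph.
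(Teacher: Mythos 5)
Your proof is correct, but it takes a genuinely different route from the paper's. The paper re-runs the construction from the second part of the proof of \autoref{theorem:theta-continuous} to produce a sequence $u_k\to z$ of points each having two nearest points on $M$, concludes $u_k\in\cS(M^c)$, uses closedness of the skeleton to place $z$ itself in $\cS(M^c)$, and then contradicts this with the non-maximality of the ball $B_{d(z,M)}(z)$. You invert the logic: you first establish $z\notin\cS(M^c)$ from the strict inclusion $B_r(z)\subsetneq B_{\alpha'}(z')\subseteq M^c$, then invoke \autoref{theorem:skeleton} together with closedness of $\cS(M^c)$ to force $z\in\overline{\cF(M)}$, and finally eliminate that branch by extracting a second nearest point of $z$ in $\overline M$ from the $\overline M$-nearest points of a sequence in $\cF(M)$, using local closedness of the submanifold at $\xi$ to keep those points away from $\xi$. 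What this buys you is independence from the internals of the earlier proof: you need only the statement of lower semi-continuity plus \autoref{theorem:skeleton} and \autoref{remark:maximal-ball}, whereas the paper's argument is only readable alongside the proof of \autoref{theorem:theta-continuous}. The price is the explicit case analysis for $\overline{\cF(M)}$, which the paper's construction disposes of implicitly. Two small points to tighten: the assertion that $r>\vartheta(\xi,v)$ implies $z\notin\cE(M)$ deserves a line of justification (if $z\in\cE(M)$ then $p(z)=\xi$ by your distance computation, and \autoref{lemma:line-segment-open} would give $]\xi,z[\subseteq\cE(M)$, contradicting the definition of $\vartheta$ as a supremum); and ``$r$ is the only admissible radius'' should rather say that $r=d(z,M)$ is the only radius for which a ball in $M^c$ centered at $z$ could possibly be maximal, smaller ones being swallowed by $B_r(z)$.
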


Note that by virtue of \autoref{theorem:theta-continuous2} and 
\autoref{lemma:fiberbundle} 
we get that $\cE(M)$ is a fiber bundle if $M$ is $\lip$ with 
closed $\cS(M^c)$.

\begin{proof}
In view of \autoref{theorem:theta-continuous} we only need to show that 
$\vartheta$ is upper semi-continuous.
For this we follow the second part of the 
proof of  \autoref{theorem:theta-continuous},
where we construct, 
under the assumption that $\vartheta$ is not
upper semi-continuous in $(\xi,v)\in \nu_1(M)$,  
a sequence
$(u_k)_{k\in \N}$ converging to $z=\xi+\vartheta(\xi,v)v$, 
such that for every $k\in \N$ there exist  
$\zeta_k,\eta_k\in M$ with $\zeta_k\ne \eta_k$ and 
$\|u_k-\zeta_k\|=d(u_k,M)=\|u_k-\eta_k\|$.

But this implies that $u_k\in \cS(M^c)$ for all $k\in \N$. Since 
$\cS(M^c)$ is closed by assumption, we also have $z\in \cS(M^c)$.
This means that $z$ is the center of a maximal ball in $M^c$.

On the other hand, since $\vartheta$ is not
upper semi-continuous in $(\xi,v)$, there exists $\alpha>\vartheta(\xi,v)$ 
such that  for all $r\in [\vartheta(\xi,v),\alpha)$
it holds that $\xi+rv\subseteq \unpp(M)$
and $\xi$ is the unique nearest point to $\xi+rv$ on $M$
(see again the proof of  \autoref{theorem:theta-continuous}).

But this contradicts the earlier finding that $z=\xi+\vartheta(\xi,v)v$
is the center of a maximal ball in $M^c$.
\end{proof}


\section*{Appendix A: Supplementary proofs}
\addcontentsline{toc}{section}{Appendix A: Supplementary proofs}

\begin{proof}[\bf Proof of \autoref{theorem:curvature}]
We know from item \ref{item:remark_submanifold2} of 
\autoref{remark:submanifold}
that $M$ can be represented as the graph
of a function on $T_\xi(M)$ in the vicinity of $\xi$, i.e., 
there exist open sets  $W\subseteq T_\xi(M)$ and $U\subseteq T_\xi(M)^\perp$ 
with $0\in W\cap U$
and a $C^2$ function $\Phi:W\to U$ 
such that $\Phi(0)=0$ and 
$M\cap (\xi+W+U)=\{\xi+t+\Phi(t)\colon t\in W\}$.
W.l.o.g.~we may identify $T_\xi(M)$ with $\R^m\times \{0\}$ and 
$T_\xi(M)^\perp$ with $\{0\}\times\R^{d-m}$
and we may assume $\Phi(0)=\xi=0$. 
Since $\Phi$ parametrizes $M$ over the tangent space in $\xi=0$, we have 
$D \Phi(0)=0$.
By \autoref{lemma:normal-vectors} 
there exists an orthonormal moving frame  $(V,n_{m+1},\dots,n_d)$ of
$\nu(V)$ with $V\subset M\cap (W+U)$ open,
and $0\in V$, and  $v=n_{m+1}(0)$.
For $k=m+1,\dots, d$ let $\Phi_k$ denote the $k$-th component of $\Phi$,
$\Phi_{k}(y)=\langle \Phi(y),n_k(0)\rangle$.
Note that $\Phi_k$ is $C^2$
and $D\Phi_k(0)=0$,
so by Taylor's theorem
\[
\Phi_k(y)=y^\top H_k y+r_k(y)
\]
with $\lim_{y\to 0} \|y\|^{-2}r_k(y)=0$, where $H_k$ is the Hessian of 
$\Phi_k$ in $0$.

Let $\partial_j$ denote differentiation w.r.t.~to the $j$-th coordinate,
and let $e_1,\dots,e_m$ denote the canonical basis vectors of $\R^m$.
For all $y\in W$ with $y+\Phi(y)\in V$ we have that 
$\{\partial_j(y+\Phi(y))\colon j=1,\dots,m\}=\{(e_j+\partial_j \Phi(y))\colon j=1,\dots,m\}$ 
forms a basis of the tangent space of $M$ in $y+\Phi(y)$. In particular,
\begin{align*}
0&=\left\langle \partial_j(y+\Phi(y)),n_k(y+\Phi(y))\right\rangle & \forall j=1,\dots,m,\\&&k=m+1,\dots,d
\\
\Rightarrow 
0
&=\left\langle \partial_i\partial_j\Phi(0),n_k(0)\right\rangle 
+\left\langle e_j,Dn_k(0)e_i\right\rangle 
& \forall i,j=1,\dots,m,\\&& k=m+1,\dots,d
\end{align*}
so that 
\[
\left\langle e_j,-Dn_k(0)e_i\right\rangle
=\left\langle \partial_i\partial_j\Phi(0),n_k(0)\right\rangle=\partial_i\partial_j\Phi_k(0)\,,
\]
and therefore,
\[
\left\langle e_j,L_{0,n_k}e_i\right\rangle=
\left\langle e_j,-P_{T_0(M)}Dn_k(0)e_i\right\rangle
=\partial_i\partial_j\Phi_k(0)=(H_k)_{ij}\,.
\]

Thus, the Hessian of $\Phi_k$ is the matrix representation of 
the shape operator $L_{0,n_k}$ with respect to the basis $(e_1,\dots,e_m)$
of $T_0(M)$. So far we have shown that 
\begin{align*}
\Phi(y)=\sum_{k=m+1}^d \langle y, L_{0,n_k} y  \rangle n_k(0) + r(y)
\end{align*}
with $\lim_{y\to 0}\|y\|^{-2}r(y)=0$ ($r=r_{m+1}n_{m+1}(0)+\dots+r_d n_d(0)$).
In particular, the Hessian is self-adjoint and so
is $L_{0,n_k}$, for every $k\in\{m+1,\dots,d\}$.

Let $\lambda_1\ge \dots\ge \lambda_m$ be the eigenvalues of $H_{m+1}$ 
(and thus of $L_{0,n_{m+1}}$).
There is no loss of generality in assuming that $(e_1,\dots,e_m)$
are the corresponding eigenvectors of $H_{m+1}$  and thus  of $L_{0,n_{m+1}}$.

\medskip
\noindent{\tt Case 1:} $\lambda_1< 0$. Then for all $R>0$ 
and for all $z\in B_R(Rn_{m+1}(0))$ we have  
\begin{align*}
\langle z, Rn_{m+1}(0) \rangle 
&> \frac{\|z\|^2}{2}> 0\,.
\end{align*}
Therefore $B_R(Rn_{m+1}(0))\subseteq \{z\in \R^d\colon \langle z,n_{m+1}(0)\rangle >0 \}$.

For $\zeta=y+\Phi(y)\in V\subseteq M$, we have that
\[
\langle \zeta, n_{m+1}(0)\rangle 
= 
\langle  y, L_{0,n_{m+1}} y  + r_k(y)
, n_{m+1}(0)\rangle 
=\sum_{j=1}^m \lambda_j y_j^2  + r_k(y) <0,
\]
for $\|y\|$ sufficiently small, and thus $y+\Phi(y)\notin B_R(Rn_{m+1}(0))$
for such $y$.

\medskip
\noindent{\tt Case 2:} $\lambda_1> 0$. Let $R<\frac{1}{2\lambda_1}$.
Then $\partial B_{R}(R n_{m+1}(0))$ is locally the graph of 
$n_{m+1}(0)^\perp\to \R n_{m+1}(0)$,
$y+\sum_{k=m+2}^d w_k n_k(0)\mapsto R-\sqrt{R^2 - \|y\|^2-\|w\|^2}$.
We show that the component of the manifold in direction of $n_{m+1}(0)$ 
lies below that graph, which implies that the manifold has empty intersection
with the interior of $B_{R}(R n_{m+1}(0))$ near $0$:
\begin{align*}
y+\Phi(y)
&=y+\sum_{k=m+2}^d \Phi_k(y) n_k(0)+\Phi_{m+1}(y)n_{m+1}(0)\\
&=y+  \sum_{k=m+2}^d \langle y,L_{0,n_{k}(0)} y\rangle n_k(0)+\langle y,L_{0,n_{m+1}(0)} y\rangle n_{m+1}(0)
+r(y)\,.
\end{align*}
It remains to verify that 
\begin{equation}\label{eqn:parabel}
R-\sqrt{R^2 - \|y\|^2-\sum_{k=m+2}^m\langle y,L_{0,n_{k}(0)} y\rangle^2} 
\ge \langle y,L_{0,n_{m+1}(0)}y\rangle + \tilde r(y)
\end{equation}
for $\|y\|$ sufficiently small with equality  iff $y=0$, where the remainder 
$\tilde r$ satisfies $\lim_{y\to 0}\|y\|^{-2}\tilde r(y)=0$.
It is easy to see that for $y\in \R^m$, $\|y\|$ small enough , 
\begin{align*}
R-\sqrt{R^2 - \|y\|^2-\sum_{k=m+2}^m\langle y,L_{0,n_{k}(0)} y\rangle^2} 
&\ge 
R-\sqrt{R^2 - \|y\|^2}\\
&\stackrel{(*)}{\ge} \lambda_1 \|y\|^2 \ge \sum_{k=1}^m \lambda_k y_k^2
=\langle y,L_{0,n_{m+1}(0)}y\rangle\,,
\end{align*}
with equality in $(*)$ iff $y=0$, and \eqref{eqn:parabel} follows.

Hence there exists $\varepsilon>0$ with 
$B_R(R n_{m+1})\cap M \cap B_\varepsilon(0)=\emptyset$. 

\medskip

On the other hand, if $R>\frac{1}{2\lambda_1}$, for all 
$\varepsilon>0$ 
we have $B_R(R n_{m+1})\cap M \cap B_\varepsilon(0)\ne\emptyset$. 
This is obtained by a similar calculation, where one has to note that
there exists  $a>0$ such that for all $y$ with $\|y\|$ sufficiently small 
but $\|y\|\ne 0$,
\begin{align*}
R-\sqrt{R^2 - \|y\|^2-\sum_{k=m+2}^m\langle y,L_{0,n_{k}(0)} y\rangle^2} 
&< 
R-\sqrt{R^2 - \|y\|^2}- a \|y\|^2\,.
\end{align*}
\noindent{\tt Case 3:} $\lambda_1=0$. Similar to Case 2, 
one can show that for all $R>0$ there exists $\varepsilon>0$ such that
$B_R(R n_{m+1})\cap M \cap B_\varepsilon(0)=\emptyset$. 
\end{proof}

\begin{proof}[Proof of \autoref{theorem:Dp-formula}] Fix $x\in \cE(M)$ and let $(V,n_{m+1},\ldots,n_d)$
be an orthonormal moving frame of $\nu(V)$ with $p(x)\in V$. 

\noindent  \texttt{Step 1.} 
We show that
\[
\Big(\id_{T_{p(x)}(M)}+\!\!\sum_{j=m+1}^d\!\!\big\langle x-p(x) , n_j(p(x))\big\rangle P_{T_{p(x)}(M)}(Dn_j)(p(x))\Big)Dp(x)=P_{T_{p(x)}(M)}\,.
\]
For this  consider a $C^1$ curve
$\gamma:(-\varepsilon,\varepsilon)\to\cE(M)$ with $p(\gamma(s))\in V$ for all 
$s\in(-\varepsilon,\varepsilon)$ and  $\gamma(0)=x$.  We can write
\[
\gamma=p\circ \gamma+(\gamma-p\circ \gamma)
=p\circ \gamma+\!\!\sum_{j}\!\!\big\langle \gamma-p\circ \gamma,(n_j\circ p\circ \gamma)\big\rangle (n_j\circ p\circ \gamma)\,,
\] 
where for the sake of brevity it is understood that summation ranges
from $m+1$ to $d$.
Define  curves $\beta_j:=n_j\circ p\circ \gamma$ and  scalar functions
$y_j:=\langle\gamma-p\circ \gamma, \beta_j\rangle$ for $j=m+1,\dots,d$, so that 
$\gamma=p\circ \gamma+\sum_{j}y_j \beta_j$,
and abbreviate $P=P_{T_{p(x)}(M)}$. Then
\begin{align*}
\dot \gamma&=(Dp\circ \gamma) \dot \gamma+ \sum_j(\dot y_j \beta_j+y_j\dot \beta_j)\\
P\dot \gamma&=P(Dp\circ \gamma) \dot \gamma+  \sum_j (P\dot y_j \beta_j + P y_j\dot \beta_j)
=P(Dp\circ \gamma) \dot \gamma+  \sum_j (\dot y_j P\beta_j +   y_j P\dot \beta_j)\\
&=P(Dp\circ \gamma) \dot \gamma+  \sum_j(\dot y_j P\beta_j +   y_j P(Dn_j\circ p\circ \gamma) (D p\circ \gamma) \dot \gamma)\,.
\end{align*}
Note that $Dp:T(\cE(M))\to T(M)$,
so $Dp(x)$ maps into $T_{p(x)}(M)$ and thus $PDp=Dp$.  
Since $P \beta_j(0)=P n_j\circ p\circ \gamma (0)=0$ and 
$(Dp\circ \gamma(0)) \dot
\gamma(0)\in T_{p(\gamma(0))}(M)=T_{p(x)}(M)$ and $\dot \gamma$ can be chosen
freely in $T_{p(x)}(M)$, it follows
\begin{align}
\nonumber P &=P(Dp\circ \gamma)(0) + \sum_j y_j(0) P(Dn_j\circ p\circ \gamma)(0)Dp\circ \gamma(0)\\
\nonumber &=\Big(\id_{T_{p(x)}(M)}+\sum_j\big\langle\gamma(0)-p(\gamma(0)), n_j(p(x))\big\rangle P(Dn_j\circ p\circ \gamma)(0)\Big)Dp\circ \gamma(0)\\
\label{eqn:Dp}&=\Big(\id_{T_{p(x)}(M)}+\sum_j\big\langle x-p(x), n_j(p(x))\big\rangle 
P(Dn_j) p(x)\Big)Dp(x)\,.
\end{align}
In particular, in the case where $x\in M$, we see $Dp(x)=P$.

\medskip

\noindent\texttt{Step 2.} 
We show that \[\id_{T_{p(x)}(M)}-\|x-p(x)\|L_{p(x),v} \]
is invertible for $\|x-p(x)\|\ne 0$. 

Otherwise there exists $t\in T_{p(x)}(M)$ that is mapped to 0
by $\id_{T_{p(x)}(M)}-\|x-p(x)\|L_{p(x),v}$, and therefore
\[
t= \|x-p(x)\|L_{p(x),v}t\,,
\]
so that $t$ is an eigenvector for 
$L_{p(x), v}$ corresponding to the 
positive eigenvalue $\|x-p(x)\|^{-1}$.
It follows from \autoref{theorem:curvature} that 
$\|x-p(x)\|\ge \varrho\big(p(x),\tfrac{x-p(x)}{\|x-p(x)\|}\big)$, 
which contradicts $x\in \cE(M)$ by \autoref{lemma:curvature}.

\medskip

\noindent \texttt{Step 3.} Conclusion of the proof.
For the case where $x\in M$ there is nothing left to show.

In the case where $x\notin M$ we have 
$\sum_j \langle x-p(x),n_j(p(x))\rangle n_j(p(x))=x-p(x)=\|x-p(x)\|v$ 
by the definition of $v$. Furthermore,
$v=\sum_j \langle v, n_j(p(x))\rangle n_j(p(x))$, and $(V,n)$ with 
$n:=\sum_j \langle v, n_j(p(x))\rangle n_j$
is a unit normal field
with $n(p(x))=v$.
Thus, $P(Dn)(p(x))=-L_{p(x),v}$ by 
\autoref{definition:shape}.

On the other hand, 
\begin{align*}
\|x-p(x)\|P(Dn)(p(x))&=\|x-p(x)\|P\Big(D\sum_j\langle v, n_j(p(x))\rangle n_j\Big)(p(x))\\
&=\sum_j \|x-p(x)\|\langle v, n_j(p(x))\rangle P(Dn_j)(p(x))\\
&=\sum_j \langle x-p(x), n_j(p(x))\rangle P(Dn_j)(p(x))
\end{align*}

By Steps 2 and  3, we can infer that on $T_{p(x)}(M)$
\begin{align*}
Dp(x)&=\Big(\id_{T_{p(x)}(M)}-\|x-p(x)\|L_{p(x),v}\Big)^{-1}\,,
\end{align*}
and since $Dp(x)$ vanishes on $T_{p(x)}(M)^\perp$,
\begin{align*}
Dp(x)
&=\Big(\id_{T_{p(x)}(M)}-\|x-p(x)\|L_{p(x),v}\Big)^{-1}P_{T_{p(x)}(M)}\\
&=\Big(\id_{T_{p(x)}(M)}-\|x-p(x)\|L_{p(x),v}\Big)^{-1}P_{T_{p(x)}(M)}\,.
\end{align*}
\end{proof}

\subsection*{Acknowledgments}

We are grateful to 
Joscha Prochno, 
Karin Schnass,
Peter Stadler,
Michaela Szölgyenyi, 
and Johannes Wallner for various useful comments which helped to
improve the manuscript. 


\subsection*{Author's addresses}

\noindent Gunther Leobacher, Institute of Mathematics and Scientific Computing, University of Graz. 
Heinrichstraße 36, 8010 Graz, Austria. \\{\tt gunther.leobacher@uni-graz.at}\\

\noindent Alexander Steinicke, Institute of Applied Mathematics, 
Montanuniversitaet Le\-o\-ben.
Peter-Tunner-Straße 25/I, 8700 Leoben, Austria. \\{\tt alexander.steinicke@unileoben.ac.at}
\end{document}